%==========================================================================
%
%
%	Authors: Hyungsung Yun
%	Title: Regularity theory for fully nonlinear equations of porous medium-type
%
%
%==========================================================================

\documentclass[11pt]{amsart}

%==========================================================================
%
%				Packages and other document configurations
%
%==========================================================================

% Package
\usepackage{mathrsfs} 
\usepackage{mathtools}
\usepackage{a4wide} 
\usepackage{graphicx} 
\usepackage{amssymb} 
\usepackage{epstopdf}
\usepackage{enumerate} 
\usepackage{titletoc} 
\usepackage[colorlinks=true, urlcolor=blue, linkcolor=blue, citecolor=black]{hyperref} %
\usepackage[nameinlink]{cleveref} 
\usepackage{esint} 
\usepackage{verbatim}
\usepackage{enumitem}
\usepackage{kotex}

% Equation
\numberwithin{equation}{section}
\DeclareMathOperator*{\osc}{osc}

% Theoremstyle plain
\theoremstyle{plain}
\newtheorem{theorem}{Theorem}[section]
\newtheorem{lemma}[theorem]{Lemma}

% Theoremstyle definition
\theoremstyle{definition}
\newtheorem{definition}[theorem]{Definition}
\newtheorem{remark}[theorem]{Remark}

%==========================================================================
%
%							MSC2020
%
%==========================================================================

\makeatletter
\@namedef{subjclassname@2020}{\textup{2020} Mathematics Subject Classification}
\makeatother

%==========================================================================
%
%							Title Information
%
%==========================================================================

\title[Fully nonlinear equations of porous medium-type]{Regularity theory for fully nonlinear equations of porous medium-type}

% Information for 1st author
\author{Hyungsung Yun}
\address{School of Mathematics, Korea Institute for Advanced Study, Seoul 02455, Republic of Korea}
\email{hyungsung@kias.re.kr}

\subjclass[2020]{Primary 35B65; Secondary 35D40, 35K55, 35K65}
% 35B51: Comparison principles in context of PDEs
% 35B65: Smoothness and regularity of solutions to PDEs
% 35D40: Viscosity solutions to PDEs
% 35K10: Second-order parabolic equations
% 35K55: Nonlinear parabolic equations
% 35K65: Degenerate parabolic equations
% 35K67: Singular parabolic equations

\keywords{Fully nonlinear degenerate equations, Porous medium equation, Viscosity solution}
\thanks{Hyungsung Yun has been supported by the KIAS Individual Grant (No. MG097801) at Korea Institute for Advanced Study.}

%==========================================================================
%
%							Document
% 
%==========================================================================

\begin{document}

\begin{abstract}
In this paper, we establish the regularity results for nonnegative viscosity solutions to fully nonlinear equations of porous medium-type in bounded domains with the zero Dirichlet boundary condition, to be precise, we prove the global $C^{2,\alpha}$-estimates of viscosity solutions. In many PDE problems, the $C^{2,\alpha}$-estimates have been obtained through Schauder-type estimates. However, the Schauder-type estimates are not applicable to the porous medium-type equations. We provide techniques for handling porous medium-type equations so that the global $C^{2,\alpha}$-estimates can be established.

\end{abstract}

\maketitle

%==========================================================================
% 
%							Introduction
% 
%==========================================================================

\section{Introduction}
In this paper, we consider the Cauchy--Dirichlet problem for fully nonlinear degenerate parabolic equations of porous medium-type
\begin{equation} \label{prob:main}
	\left\{\begin{aligned}
		u_t &= u^\gamma F(D^2 u,x,t) && \text{in }\Omega\times(-T,0] \\
		u &= u_0&& \text{in } \Omega\times\{ t=-T \} \\
		u &= 0 && \text{on } \partial\Omega\times[-T,0),
	\end{aligned}\right.
\end{equation}
where $0<\gamma<1$, $u_0>0$ in $\Omega$, and $\Omega$ is a bounded domain. Here the fully nonlinear operator $F$ is uniformly parabolic with certain structural conditions (the hypotheses on $F$ will be precisely stated in \Cref{sec:pre}). 

We present some previous literature that provides motivation for the problem \eqref{prob:main}. Using hodograph transform and change of variables that flattens the boundary, the equation in \eqref{prob:main} is converted to a degenerate equation of the form
\begin{equation} \label{intro:ly23}
	v_t = x_n^\gamma \widetilde{F}(D^2v, Dv, x,t) \quad \text{in } Q_1^+ \coloneqq B_1^+ \times (-1,0].
\end{equation}
Since the $Dv$ dependence of $\widetilde F$ can be treated as an $(x,t)$-variable thanks to the interior $C^{1,\alpha}$-regularity of $v$, the global $C^{1,\alpha}$ and $C^{2,\alpha}$-regularity of viscosity solutions to the Cauchy--Dirichlet problem for \eqref{intro:ly23} can be obtained from the results of Lee and Yun \cite{LY23}. Let $u$ be a solution to \eqref{prob:main} and let $v$ be a solution to \eqref{intro:ly23}. Then, the higher regularity of $u$ can be described by the $C_s^{k,\alpha}$-regularity of $v$ for the singular metric
 \begin{equation*}
	s[(x,t),(y,\tau)] \coloneqq \frac{|x-y|}{x_n^{\gamma/2} + y_n^{\gamma/2} + |x'-y'|^{\gamma/2}} + \sqrt{|t-\tau|},
\end{equation*} 
where $C_s^{k,\alpha}(\overline{Q_1^+})$ is the $s$-H\"older space with respect to metric $s$; we refer to \cite{DH98, KL09} and see also \cite{KLY23, KLY24}. Although we can obtain the $C_s^{2,\alpha}$-regularity of $u$ in this way, the $C_s^{2,\alpha}$-regularity still does not deduce the regularity of $u$ in the classical sense. Nevertheless,  results in \cite{LY23} naturally lead to the following question:\\
	
\textbf{Question.} Are solutions to the problem \eqref{prob:main} classical up to the boundary? \\

There is an answer to this question when $F(D^2u,x,t)=\Delta u$. Consider the Cauchy--Dirichlet problem for the porous medium equation of nondivergence form
\begin{equation} \label{prob:FME} 
	\left\{\begin{aligned}
		u_t &= u^\gamma \Delta u && \text{in } \Omega \times (0,\infty) \\
		u &= u_0 && \text{in } \Omega \times \{t=0\}  \\
		u &= 0 && \text{on } \partial \Omega \times [0,\infty).
	\end{aligned}\right.
\end{equation}
For the problem \eqref{prob:FME}, the above question was proved by Jin, Ros-Oton, and Xiong \cite{JROX24}, to be precise, they show that the optimal regularity of weak solutions $u$ to the problem \eqref{prob:FME} with a smooth bounded domain $\Omega$ is 
\begin{equation*}
	\left\{\begin{aligned}
		u(\cdot, t) &\in C^{2,1-\gamma}(\overline\Omega) && \text{for all } t > T^*\\
		u(x, \cdot) &\in C^{\infty}(T^*,\infty) && \text{uniformly in } x \in \overline{\Omega},
	\end{aligned}\right.
\end{equation*}
where the time $T^*>0$ satisfying
\begin{equation} \label{u_sp_lip}
	u(\cdot,t) \in \textnormal{Lip} (\overline{\Omega}) \quad \text{for all } t > T^*.
\end{equation}
Their results \cite{JROX24} lead us to expect that the above question would also be positive, but there are technical differences between the two problems. Since the equation in \eqref{prob:FME} can be transformed into a divergence form, variational techniques are applicable to \eqref{prob:FME}, to be precise, by setting $u=(1-\gamma)^{-1/\gamma}v^{\frac{1}{1-\gamma}}$, the problem \eqref{prob:FME} can be transformed into the porous medium equation of divergence form
\begin{equation*}
	\left\{\begin{aligned}
		v_t &= \Delta v^m&& \text{in } \Omega \times (0,\infty) \\
		v &= v_0 && \text{in } \Omega \times \{t=0\}  \\
		v &= 0 && \text{on } \partial \Omega \times [0,\infty),
	\end{aligned}\right.
\end{equation*}
where $m \coloneqq 1/(1 -\gamma)$ and $u_0 \coloneqq (1-\gamma)^{-1/\gamma}v_0^{\frac{1}{1-\gamma}}$. However, their techniques for weak solutions are not suitable for the problem \eqref{prob:main} of the nondivergence form, thus to answer the question, we need to study the regularity of viscosity solutions to \eqref{prob:main}. 

The goal of this paper is to answer the above question.
%==========================================================================
%							Main Results
%==========================================================================
\subsection{Main results}
Aronson and Peletier \cite{AP81} proved that there exists a time $T^*$ such that 
\begin{equation} \label{result:ap81}
	u(\cdot,t) \asymp \textnormal{dist}(\cdot,\partial \Omega) \quad \text{for all } t > T^*.
\end{equation}
To answer the question, we assume appropriate condition corresponding to \eqref{u_sp_lip} and  \eqref{result:ap81} in the initial data $u_0$. That is, the time $T^*$ can be understood as the initial time in our scenario.

Here is our main theorem.
%==========================================================================
%					Main Theorem 1: Global C^{2,a}-Regularity
%==========================================================================
\begin{theorem}\label{thm:solv}
	Suppose that $F$ satisfies \textnormal{\ref{F1}}-\textnormal{\ref{F3}} and $u_0 \in C^{2,\alpha}(\overline{\Omega})$ satisfies
\begin{equation} \label{con_u0}
	c_1 \,\textnormal{dist}(x,\partial \Omega) \leq u_0(x) \leq c_2 \,\textnormal{dist}(x,\partial \Omega) \quad \text{for all } x \in \overline{\Omega}
\end{equation}
for some constants $c_1>0$, $c_2>0$, and $\alpha\in(0,1)$. Let $\Omega \subset \mathbb{R}^n$ be any bounded $C^{2,\alpha}$-domain and let $u\in C(\overline{\Omega_T})$ be the nonnegative viscosity solution of \eqref{prob:main}. Then $u \in C^{2, \widetilde\alpha}(\overline{\Omega_T})$ for some $\widetilde\alpha \in (0,\alpha)$ with a uniform estimate
\begin{equation*}
	\|u\|_{C^{2,\widetilde\alpha}(\overline{\Omega_T})} \leq C,
\end{equation*}
where $C>0$ is a constant depending only on  $n$, $\lambda$, $\Lambda$, $\gamma$, $\alpha$, $c_1$, $c_2$, $\|u\|_{L^{\infty}(\Omega_T)}$, $\|u_0\|_{C^{2,\alpha}(\overline{\Omega_T})}$, $\|\beta^1\|_{C^{\alpha}(\overline{\Omega_T})}$, $\|\beta^2\|_{C^{\alpha}(\overline{\Omega_T})}$, and $ \|\partial \Omega\|_{C^{2,\alpha}}$.
\end{theorem}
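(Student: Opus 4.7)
My plan is to reduce \Cref{thm:solv} to the regularity theory developed by Lee and Yun \cite{LY23} for degenerate equations with weight $x_n^\gamma$, and then bridge the gap between $C_s^{2,\alpha}$-regularity in the singular metric $s$ and classical $C^{2,\widetilde\alpha}$-regularity. As a preliminary step, I would propagate the two-sided distance bound \eqref{con_u0} to all times by constructing sub- and supersolutions of the form $c(t)\eta(x)$, where $\eta$ is a smooth regularization of $\textnormal{dist}(\cdot,\partial\Omega)$ built from the $C^{2,\alpha}$-regularity of $\partial\Omega$, and invoking the comparison principle for viscosity solutions. This yields $u(x,t)\asymp \textnormal{dist}(x,\partial\Omega)$ uniformly in $t\in[-T,0]$ and pins down the rate of degeneration of the coefficient $u^\gamma$ up to the boundary.

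Next, I would localize near a boundary point and apply a $C^{2,\alpha}$-diffeomorphism that flattens $\partial\Omega$ into $\{x_n=0\}$. After this change of variables, $u\asymp x_n$, and writing $u^\gamma = x_n^\gamma (u/x_n)^\gamma$ allows me to absorb the bounded factor $(u/x_n)^\gamma$ into the nonlinearity, producing a model equation of the form $u_t = x_n^\gamma \widetilde{F}(D^2 u, Du, x, t)$ in $Q_1^+$. The structure conditions \ref{F1}--\ref{F3} are preserved, and, as the introduction notes, \cite{LY23} then provides global $C^{1,\alpha}$ and $C_s^{2,\alpha}$-estimates for $u$, with the $Du$-dependence handled as an $(x,t)$-variable by means of the interior $C^{1,\alpha}$-regularity.

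The heart of the proof lies in upgrading $C_s^{2,\alpha}$-regularity in the singular metric to classical $C^{2,\widetilde\alpha}$-regularity up to $\{x_n=0\}$. I would proceed by an expansion-at-the-boundary argument: at each boundary point I expect the correct ``model profile'' to be linear in $x_n$ with tangentially smooth coefficients, so I would compare $u$ with polynomials of the form $\ell(x',t)\,x_n + \tfrac{1}{2}q(x',t)\,x_n^2$ on dyadic parabolic cylinders. The corrections should be controlled by freezing coefficients and invoking interior $C^{2,\alpha}$-regularity on $\{x_n>0\}$, together with the $C_s^{2,\alpha}$-information from the previous step, to obtain a decay of the form $\sup_{Q_r^+(x_0,t_0)} |u - P_{(x_0,t_0)}| \le C r^{2+\widetilde\alpha}$. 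Iterating this estimate across scales and matching it with the interior $C^{2,\widetilde\alpha}$-estimates in $\{u>0\}$ then gives the global classical $C^{2,\widetilde\alpha}$-regularity, while the initial datum contributes via the $t=-T$ slice, using the $C^{2,\alpha}$-regularity assumption on $u_0$ to start the iteration uniformly.

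The main obstacle will be the passage from singular-metric regularity to classical regularity in the last step. Standard Schauder-type perturbation is unavailable because the equation fails to be uniformly parabolic on $\{x_n=0\}$, and the naive pressure substitution $p=u^{1-\gamma}/(1-\gamma)$, while formally yielding a non-degenerate equation $p_t=F(D^2u,x,t)$ on $\{u>0\}$, introduces singular lower-order terms that must be treated carefully. Ensuring that the exponent $\widetilde\alpha\in(0,\alpha)$ does not deteriorate under iteration, and that the estimates remain uniform in time so that the initial $C^{2,\alpha}$-regularity propagates without loss, will require a tailored scheme that carefully matches interior estimates to the boundary expansion.
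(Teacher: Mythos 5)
Your overall outline is close in spirit to the paper's strategy, but there is a genuine gap at the step where you write $u^\gamma = x_n^\gamma (u/x_n)^\gamma$, absorb $(u/x_n)^\gamma$ into $\widetilde F$, and assert that ``the structure conditions \ref{F1}--\ref{F3} are preserved.'' The preservation of \ref{F1} and \ref{F2} is indeed immediate since $u/x_n$ is bounded between two positive constants (that much follows from your first step, which matches the paper's \Cref{lem:u<x}). But \ref{F3} for $\widetilde F$ amounts to $\alpha$-H\"older continuity of $h \coloneqq u/\mathrm{dist}(\cdot,\partial\Omega)$ up to the boundary, and at the point where you invoke it you have only established that $h$ is bounded. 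Boundedness of $h$ alone gives no modulus of continuity, so the boundary $C^{2,\alpha}$-estimate of \cite{LY23} does not apply; with unknown modulus of continuity on the coefficient in front of $F$, the best one can extract is $C^{1,\alpha}$ (by treating $\widetilde F$ as lying between two Pucci envelopes). This is precisely the circularity that the paper breaks in a dedicated step (\Cref{lem:ly24_gen}): it first establishes the boundary $C^{1,\alpha}$-estimate using only the Pucci sandwich and the Lipschitz bound on $u/\mathrm{dist}$, then applies the $x_n$-weighted $C^{1,\alpha}$-theory (\Cref{result:ly24-3}/\Cref{rem:xn1a-growth}) in flattened coordinates to derive $h\in C^{\alpha_0}$ near $\partial_s\Omega_T$, and only then does the converted operator $\widetilde F$ satisfy \ref{F3}, enabling the Schauder-type iteration.

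A secondary, less serious deviation: the paper does not pass through the singular-metric space $C_s^{2,\alpha}$ and then upgrade to classical $C^{2,\widetilde\alpha}$ as you propose; the introduction explicitly flags this upgrade as not directly available. Instead, once \ref{F3} for $\widetilde F$ is in hand, the boundary $C^{2,\alpha}$-estimate is proved directly in the classical sense via a compactness-plus-iteration argument (\Cref{lem1:gen_c2a}, \Cref{lem2:gen_c2a}) approximating $u$ by polynomials $P_k(x)=\sum_i a_i^k x_i x_n$ on shrinking cylinders, with the flat model estimate \Cref{result:ly24-2} as the compactness target. The global statement is then obtained by combining this boundary estimate with scaled interior Schauder estimates on parabolic cylinders of size comparable to the distance to $\partial_p\Omega_T$. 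If you fill in the H\"older continuity of $u/\mathrm{dist}$ as a separate lemma (modeled on \Cref{lem:ly24_gen}) and replace the $C_s^{2,\alpha}$-upgrade with the direct iteration, your proposal aligns with the paper's proof.
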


To prove \Cref{thm:solv}, we first establish the boundary $C^{1,\alpha}$ and $C^{2,\alpha}$-regularity. The constants $\overline{\alpha} \in (0,1)$ used in describing theorems throughout this paper is a universal constant derived from \Cref{result:ly24}.
%==========================================================================
%					Main Theorem 2: Boundary C^{1,a}-estimates
%==========================================================================
\begin{theorem}[Boundary $C^{1, \alpha}$-estimates] \label{thm:bdry_c1a}
	Let $\alpha \in(0, \overline{\alpha})$ and $(x_0,t_0) \in \partial \Omega \times (-T,0]$. Suppose  that $\partial \Omega \in C^{1,\alpha}(x_0)$, and  $u_0 \in C(\overline{\Omega})$ satisfies \eqref{con_u0}. Let $u \in C(\overline{\Omega_T})$ be a nonnegative function that satisfies 
\begin{equation} \label{eq:s-class}
	\left\{\begin{aligned}
		u_t &\ge u^{\gamma} \mathcal{M}^{-}_{\lambda, \Lambda}(D^2u) && \text{in the viscosity sense in } \Omega_T \\	
		u_t &\leq u^{\gamma} \mathcal{M}^{+}_{\lambda, \Lambda}(D^2u) && \text{in the viscosity sense in } \Omega_T \\
		u&=u_0 && \text{on } \partial_b \Omega_T \coloneqq \Omega \times \{ t= -T\} \\
		u&=0 && \text{on } \partial_s \Omega_T  \coloneqq  \partial \Omega \times (-T,0).
	\end{aligned}\right.
\end{equation}
Then $u \in C^{1, \alpha}(x_0,t_0)$, that is, there exists a linear function $L$ such that
	\begin{align*}
		|u(x,t)-L(x)| \leq C_\circ (|x-x_0|+\sqrt{|t-t_0|})^{1+\alpha} \quad \text{for all } (x,t) \in \Omega_T \cap Q_1(x_0,t_0) 
	\end{align*}
	and 
	\begin{equation*}
		|Du(x_0,t_0)| \leq C_\circ,
	\end{equation*}
	where $C_\circ>0$ is a constant depending only on $n$, $\lambda$, $\Lambda$, $\gamma$, $\alpha$,  $ \|u\|_{L^{\infty}(\Omega_T)}$, and $\|\partial \Omega\|_{C^{1,\alpha}(x_0)}$.
\end{theorem}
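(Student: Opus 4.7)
The plan is to reduce the problem to the boundary $C^{1,\alpha}$-estimates for equations of the degenerate form $w_t$ comparable to $x_n^\gamma \cdot (\text{uniformly elliptic operator})$, for which the existing theory cited as \Cref{result:ly24} provides the universal exponent $\overline{\alpha}$. The bridge is to establish, in a one-sided parabolic neighborhood of $(x_0, t_0)$, a two-sided linear bound
\[
c_1' \, \textnormal{dist}(x,\partial\Omega) \leq u(x,t) \leq c_2' \,\textnormal{dist}(x,\partial\Omega),
\]
uniformly in $t \in (-T,t_0]$, which translates the factor $u^\gamma$ into the distance-weighted factor $x_n^\gamma$ after flattening.

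First, I would localize at $x_0$ and perform a $C^{1,\alpha}$-diffeomorphism that flattens $\partial\Omega$ near $x_0$, so that in the new coordinates $x_0 = 0$, the boundary is a graph $x_n = g(x')$ with $g(0) = 0$, $Dg(0) = 0$ and $\|g\|_{C^{1,\alpha}}$ controlled. The S-class structure \eqref{eq:s-class} is preserved with adjusted but universal Pucci constants, and $\textnormal{dist}(x,\partial\Omega) \asymp x_n$ in the image. Second, I would propagate the two-sided linear bound from $t = -T$ to all $t \in (-T,t_0]$ by comparison with linear barriers. The upper barrier $\Phi(x) = A\, \textnormal{dist}(x,\partial\Omega)$ (regularized to be $C^2$ in the interior strip) satisfies $\Phi_t - \Phi^\gamma \mathcal{M}^+_{\lambda,\Lambda}(D^2 \Phi) \ge 0$ for $A$ sufficiently large, because $\mathcal{M}^+$ of $D^2$(smoothed distance) is bounded in a tubular neighborhood when $\partial\Omega \in C^{1,\alpha}$; together with $\Phi \ge u_0$ at $t=-T$ by \eqref{con_u0} and $\Phi = u = 0$ on the lateral boundary, viscosity comparison yields $u \le \Phi$. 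A symmetric construction with a slightly concave sub-barrier of the form $a\,\textnormal{dist}(x,\partial\Omega) - b(\textnormal{dist})^{1+\eta}$ gives the lower bound $u \ge c_1' \,\textnormal{dist}(x,\partial\Omega)$.

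Third, once $c_1' x_n \le u \le c_2' x_n$ holds in a flattened neighborhood of $(x_0,t_0)$, the S-class hypothesis \eqref{eq:s-class} rewrites as
\[
u_t \ge (c_1')^\gamma x_n^\gamma \mathcal{M}^{-}_{\lambda,\Lambda}(D^2 u), \qquad u_t \le (c_2')^\gamma x_n^\gamma \mathcal{M}^{+}_{\lambda,\Lambda}(D^2 u),
\]
where the sign structure of $\mathcal{M}^{\pm}(D^2 u)$ is absorbed by replacing $u^\gamma$ with the appropriate one-sided envelope $c_i'^\gamma x_n^\gamma$ at the cost of enlarging the ellipticity ratio by the universal factor $(c_2'/c_1')^\gamma$. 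Thus $u$ belongs to the S-class of the degenerate operator $x_n^\gamma \mathcal{L}$ with uniformly elliptic $\mathcal{L}$, and \Cref{result:ly24} applied at $(0,t_0)$ produces a linear function $L$ satisfying the announced expansion with exponent $\alpha \in (0,\overline{\alpha})$ and a gradient bound. Undoing the flattening introduces only a $C^{1,\alpha}$-controlled perturbation of $L$ and preserves both estimates with a constant depending on $\|\partial\Omega\|_{C^{1,\alpha}(x_0)}$.

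The main obstacle is the uniform-in-time lower bound $u \ge c_1' \, \textnormal{dist}(x,\partial\Omega)$. The degeneracy $u^\gamma$ kills the driving term exactly where the control is needed, so a naive linear sub-barrier is only a subsolution up to an error that decays like $d^\gamma$; the sub-barrier must therefore be chosen so that its own diffusion term absorbs this error, and the comparison must be carried out in the viscosity sense against the S-class even though $\textnormal{dist}(\cdot,\partial\Omega)$ is only $C^{1,\alpha}$. Once this linear non-degeneracy is secured on $(-T,t_0]$, the remaining argument is a clean reduction to the weighted boundary theory.
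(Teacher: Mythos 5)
Your high-level plan — establish $u\asymp\textnormal{dist}(\cdot,\partial\Omega)$ uniformly in time and then hand the problem to the weighted boundary theory behind \Cref{result:ly24} — is exactly the strategy the paper uses. However, two of your intermediate steps do not go through as written.

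First, the barriers. For the lower bound you propose a static sub-barrier $\psi = a\,d - b\,d^{1+\eta}$ with $d = \textnormal{dist}(\cdot,\partial\Omega)$. With $\psi_t = 0$ the viscosity subsolution inequality forces $\psi^\gamma\mathcal{M}^+_{\lambda,\Lambda}(D^2\psi)\geq 0$, i.e.\ $\mathcal{M}^+_{\lambda,\Lambda}(D^2\psi)\geq 0$ where $\psi>0$. But $D^2\psi$ contains the rank-one term $-b\eta(1+\eta)d^{\eta-1}\,\nabla d\otimes\nabla d$, whose eigenvalue tends to $-\infty$ as $d\to 0$; the remaining part of $D^2\psi$ is at best bounded (and for a merely $C^{1,\alpha}$ domain it is not even that). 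Hence $\mathcal{M}^+_{\lambda,\Lambda}(D^2\psi)\to -\infty$ near the boundary and $\psi$ is not a subsolution. A symmetric issue afflicts the static upper barrier $A\,d$: you need $\mathcal{M}^+_{\lambda,\Lambda}(D^2(Ad))\leq 0$, which amounts to $D^2 d\leq 0$ and fails for non-convex domains (and on $C^{1,\alpha}$ domains $D^2 d$ is not controlled at all). Beyond these sign problems, the conditional comparison principle available here (\Cref{com_prin} and \Cref{com_prin2}) requires the test barrier to satisfy a \emph{strict} inequality $F(D^2 v,\cdot)<0$, which a stationary barrier with $v_t=0$ cannot furnish. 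The paper sidesteps all of this by using the separated-variables barriers $\Phi^\pm(x,t) = (1-\gamma)^{-1/\gamma}(t+T+\tau_\pm)^{-1/\gamma}\phi^\pm(x)$, where $\phi^\pm$ solve the Kim--Lee elliptic eigenvalue-type problem from \Cref{result:kl13}; these are exact solutions of $\Phi_t = \Phi^\gamma\mathcal{M}^\pm_{\lambda,\Lambda}(D^2\Phi)$ with $\Phi_t<0$, so $\mathcal{M}^\pm(D^2\Phi^\pm)<0$, precisely the strict sign the comparison lemma needs. You would have to import such barriers to make the comparison go through.

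Second, the reduction to the flat-boundary weighted theory. You propose to flatten $\partial\Omega$ by a $C^{1,\alpha}$-diffeomorphism and then quote \Cref{result:ly24} directly. But a $C^{1,\alpha}$ change of variables transforms $D^2 u$ into an expression with only $C^{0,\alpha}$ (in fact merely measurable once the chart's derivative is involved) coefficients and introduces first-order terms, so the transformed operator is no longer controlled by constant-coefficient Pucci operators in the sense required by \Cref{result:ly24}, and the weight is $[\textnormal{dist}]^\gamma$ rather than $x_n^\gamma$. Your parenthetical claim that ``the S-class structure is preserved with adjusted but universal Pucci constants'' hides exactly this loss. The paper instead stays in the original coordinates and runs a compactness/contradiction argument over a family of domains with shrinking boundary oscillation (\Cref{lem1:gen}), so that the flat-boundary result is applied only to the \emph{limit} configuration; the $C^{1,\alpha}$-graph error is then absorbed into the boundary data term and iterated away in \Cref{lem2:gen}. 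That mechanism is what actually carries the step from the flat model to a general $C^{1,\alpha}$ boundary, and your proposal currently has no substitute for it.
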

%==========================================================================
%					Main Theorem 3: Boundary C^{2,a}-estimates
%==========================================================================
\begin{theorem}[Boundary $C^{2, \alpha}$-estimates] \label{thm:bdry_c2a}
	Let $C_\circ >0$ be as in \Cref{thm:bdry_c1a}, $0< \alpha <\gamma \min\{\overline{\alpha},1-\gamma\}$, and $(x_0,t_0) \in \partial \Omega \times (-T,0]$. Suppose that  $\partial \Omega \in C^{2,\alpha}(x_0)$, $u_0 \in C(\overline{\Omega})$ satisfies \eqref{con_u0}, and $F$ satisfies \textnormal{\ref{F1}}-\textnormal{\ref{F3}}. Let $u \in C(\overline{\Omega_T})$ be a nonnegative viscosity solution of \eqref{prob:main}. Then $u \in C^{2, \alpha}(x_0,t_0)$, that is, there exists a polynomial $P(x)$ with $\deg P \leq 2$ and $F(D^2 P,x_0,t_0) =0$ such that
	\begin{align*}
		|u(x,t)-P(x)|\leq CN(|x-x_0|+\sqrt{|t-t_0|} )^{2+\alpha} \quad \text{for all $(x,t) \in \Omega_T \cap Q_1(x_0,t_0)$,}
	\end{align*}
 and 
	\begin{equation*}
		u_t(x_0,t_0)=0, \quad |Du(x_0,t_0)| + \|D^2u(x_0,t_0)\| \leq CN,
	\end{equation*}
	where the constant $N$ is given by
	\begin{equation*}
		N\coloneqq (\|u\|_{L^{\infty}(\Omega_T)} + [\beta^2]_{C^{\alpha}(x_0,t_0)} +  C_\circ  \|\partial \Omega\|_{C^{2,\alpha}(x_ 0)} + |F(O_n,x_0,t_0)|)
	\end{equation*}
	and $C>0$ is a constant depending only on  $n$, $\lambda$, $\Lambda$, $\gamma$, $\alpha$, and $[\beta^1]_{C^{\alpha}(x_0,t_0)}$.
\end{theorem}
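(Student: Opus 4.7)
The plan is to adapt the Caffarelli-style iteration for boundary $C^{2,\alpha}$-estimates, with the crucial modification that the compactness limit is no longer a frozen-coefficient linear equation but a weighted degenerate equation of the form $v_t = a^\gamma x_n^\gamma L(D^2 v)$, whose boundary $C^{2,\alpha}$-theory is already available from \cite{LY23}. After translating $(x_0,t_0)$ to the origin and flattening $\partial\Omega$ near the origin by a $C^{2,\alpha}$-diffeomorphism (whose cost is absorbed into $\|\partial\Omega\|_{C^{2,\alpha}(x_0)}$), one may assume $\Omega\cap Q_1\subset\{x_n>0\}$ with $\partial\Omega\cap Q_1=\{x_n=0\}\cap Q_1$. \Cref{thm:bdry_c1a} then yields a linear approximation $L(x)=ax_n$ with $|u-L|\lesssim(|x|+\sqrt{|t|})^{1+\alpha}$ on $Q_1^+$, and the nondegeneracy \eqref{con_u0} together with the Aronson--Peletier-type estimate \eqref{result:ap81} forces the slope $a$ to be bounded below by a positive constant.

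The iteration I would set up produces polynomials
\[
P_k(x) = a x_n + \tfrac{1}{2}\, x^{\mathsf T}A_k x, \qquad F(A_k,0,0)=0,
\]
satisfying $\sup_{Q_{r_k}^+} |u-P_k| \le N r_k^{2+\alpha}$ and $\|A_{k+1}-A_k\|\lesssim N r_k^\alpha$, with $r_k=\mu^k$ for a small $\mu$ to be chosen. The inductive step rescales $w_k(x,t) = (N r_k^{2+\alpha})^{-1}(u-P_k)(r_k x, r_k^2 t)$ on $Q_1^+$. Using $u(r_k x,r_k^2 t) = a r_k x_n + O(r_k^{1+\alpha})$ and the binomial-type expansion
\[
u^\gamma = (a x_n)^\gamma r_k^\gamma + O\!\bigl(r_k^{\gamma+\alpha} x_n^{\gamma-1}\bigr),
\]
together with the H\"older continuity of $F$ in $(x,t)$ coming from the $\beta^1,\beta^2$ structural hypotheses, $w_k$ turns out to solve an equation that is a small parabolic perturbation of the frozen degenerate limit $\partial_t w = a^\gamma x_n^\gamma L_\infty(D^2 w)$ with $w=0$ on $\{x_n=0\}$, where $L_\infty$ is the linearization of $F(\cdot,0,0)$ at the limit matrix.

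A compactness argument then yields a subsequential locally uniform limit $w_\infty$ solving exactly this frozen degenerate equation. Invoking the boundary $C^{2,\overline\alpha}$-estimate of \cite{LY23} provides a quadratic polynomial approximating $w_\infty$ on $Q_\mu^+$ with gain $\mu^{2+\overline\alpha}$, and selecting $\mu$ so that the resulting gain factor is $\le \tfrac12$, together with an implicit-function adjustment on the smooth level set $\{F(\cdot,0,0)=0\}$ (permissible by uniform ellipticity), produces $A_{k+1}$ and closes the induction. Summability of $\|A_{k+1}-A_k\|$ yields a limiting coefficient $A_\infty$ with $F(A_\infty,0,0)=0$, the polynomial $P$, and the desired pointwise $C^{2,\alpha}$-expansion, while the vanishing of the boundary trace of $u^\gamma$ at $(x_0,t_0)$ gives $u_t(x_0,t_0)=0$. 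The main obstacle I foresee is controlling the expansion of $u^\gamma$ on cylinders touching $\{x_n=0\}$: the factor $x_n^{\gamma-1}$ is unbounded at the boundary, so the error term $r_k^{\gamma+\alpha}x_n^{\gamma-1}$ can be absorbed into the perturbation only when $\alpha<\gamma(1-\gamma)$. This is precisely the source of the joint smallness constraint $\alpha<\gamma\min\{\overline\alpha,1-\gamma\}$ in the statement: $\alpha<\gamma\overline\alpha$ keeps the iteration gain strictly positive, while $\alpha<\gamma(1-\gamma)$ controls the porous-medium perturbation after rescaling.
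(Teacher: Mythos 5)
There is a genuine gap in your approach. Your binomial expansion $u^\gamma = (ax_n)^\gamma r_k^\gamma + O(r_k^{\gamma+\alpha}x_n^{\gamma-1})$ is valid pointwise (by the mean value theorem and $u \asymp r_k x_n$), but after renormalizing $w_k$ the residual forcing in the equation for $w_k$ becomes of order $x_n^{\gamma-1}$, which is unbounded as $x_n\to 0^+$. The compactness/approximation machinery of \cite{LY23} (in particular the perturbative boundary Lipschitz and $C^{1,\alpha}$ lemma, \Cref{result:ly24-3}) requires the inhomogeneity $f$ to be in $L^\infty$; an $x_n^{\gamma-1}$-singular source cannot be absorbed into $\|f\|_{L^\infty}$, so the constraint $\alpha<\gamma(1-\gamma)$ does not rescue the boundary layer $x_n\lesssim r_k^\alpha$. (There is also a secondary scaling issue: for the $x_n^\gamma$-degenerate equation the correct parabolic rescaling is $t\mapsto r^{2-\gamma}t$, not $r^2 t$; with $r^2 t$ the rescaled PDE carries a stray factor $r^\gamma$ and degenerates in the limit.)

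The paper's resolution is to avoid any expansion of $u^\gamma$ altogether. It factors $u^\gamma = h^\gamma\,[\textnormal{dist}(\cdot,\partial\Omega)]^\gamma$ \emph{exactly}, where $h = u/\textnormal{dist}(\cdot,\partial\Omega)$, and proves in \Cref{lem:ly24_gen} (via flattening, \Cref{rem:xn1a-growth}, and the two-sided bound from \Cref{lem:u<x}) that $h$ is H\"older continuous up to the boundary with exponent $\alpha_0 < \min\{\overline\alpha, 1-\gamma\}$. This converts \eqref{prob:main} into $u_t = [\textnormal{dist}]^\gamma\,\widetilde F(D^2u,x,t)$ with $\widetilde F = h^\gamma F$ satisfying \ref{F1}--\ref{F3} with H\"older moduli of exponent $\gamma\alpha_0$; the iteration (\Cref{lem1:gen_c2a}, \Cref{lem2:gen_c2a}) then runs on a fixed $[\textnormal{dist}]^\gamma$-degenerate equation with a H\"older-in-$(x,t)$ operator, so the residuals land in the $\beta^1,\beta^2$ structure rather than in an unbounded source term. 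Incidentally, this also shows that the constraint $\alpha<\gamma\min\{\overline\alpha,1-\gamma\}$ comes from the H\"older exponent $\gamma\alpha_0$ of $h^\gamma$, not (as you suggest) from splitting the exponent between iteration gain and binomial error. Your proposal is missing this factorization and the H\"older estimate on $h$, which is the essential new ingredient of the proof.
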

%==========================================================================
%				Fully nonlinear degenerate parabolic equation
%==========================================================================
\subsection{Fully nonlinear degenerate parabolic equation}
Let us summarize the contributions of the preceding literature which deals with fully nonlinear degenerate parabolic equations of the form
\begin{equation} \label{eq:deg_sig}
	u_t = \sigma(Du,u,x,t) F(D^2 u ,x, t) + f \quad \text{in } \Omega_T,
\end{equation}
where $\sigma: \mathbb{R}^n \times \overline{\mathbb{R}_+} \times \overline{\Omega_T} \to \mathbb{R}$ is nonnegative function. Note that \eqref{eq:deg_sig} becomes a degenerate parabolic equation when $\sigma(Du,u,x,t)$ vanishes, and those points $(x,t)$ can be interior or boundary points.

When the degeneracy of \eqref{eq:deg_sig} is given as $\sigma(Du,u,x,t)=x_n^\gamma$ and part of the boundary of $\Omega \subset \mathbb{R}^n_+$ is contained in $\{x_n=0\}$, that is, \eqref{eq:deg_sig} is expressed as
\begin{equation} \label{eq:ly23}
	u_t = x_n^\gamma F(D^2 u,x,t) + f \quad \text{in } \Omega_T,
\end{equation}
the boundary $C^{1,\alpha}$ and $C^{2,\alpha}$-regularity of viscosity solutions to \eqref{eq:ly23} was developed in \cite{LY23}. Since the degeneracy of \eqref{eq:ly23} only occurs at the flat boundary $\{x_n=0\}$, it becomes a uniformly parabolic equation in $K_T$ for any $K \subset \joinrel \subset \Omega$, so it is important to study the regularity of viscosity solutions at the boundary points. The results in \cite{LY23} are closely related to \eqref{prob:main} and will actually be used to obtain regularity results in this article. In particular, the regularity result for \eqref{eq:ly23}  can be used to prove the regularity of solutions to the porous medium equation in the case of $F(D^2u ,x,t) = \Delta u$; we refer to \cite{DH98,KL09}.

The next important case is $\sigma(Du, u ,x, t)= |Du|^\gamma$, and the nonlinear operator $F$ depends only on matrix variable, that is, \eqref{eq:deg_sig} is expressed as
\begin{equation}  \label{eq:lly24}
	u_t = |Du|^\gamma F(D^2 u) + f \quad \text{in } \Omega_T.
\end{equation}
Unlike \eqref{eq:ly23}, which was covered in \cite{LY23}, \eqref{eq:lly24} can be degenerate at interior points and the set of degenerate points is not specified, which is one of the difficult factors to prove regularity results. The interior $C^{1,\alpha}$-regularity of viscosity solutions to \eqref{eq:lly24} under the assumption that $F$ is concave or convex with $F \in C^{1,\kappa}(\mathcal{S}^n)$ was developed in \cite{LLY24}. Even if we only assume the uniform ellipticity condition on $F$, the $C^{1,\alpha}$-regularity of solutions can be inferred, but it is still an open problem, and the $C^{1,\alpha}$-regularity of viscosity solutions to 
\begin{equation*}
	u_t = (1+|Du|^2)^{\gamma/2} F(D^2 u) + f \quad \text{in } \Omega_T
\end{equation*} 
has not been developed, even for non-degeneracy $\sigma(Du, u ,x, t)= (1+|Du|^2)^{\gamma/2}$.

On the other hand, the elliptic analogue of \eqref{eq:lly24} has been relatively widely studied in the last decade. To be precise, the interior $C^{1,\alpha}$-regularity of viscosity solutions to 
\begin{equation*}
	|Du|^\gamma F(D^2 u) = f \quad \text{in } B_1
\end{equation*}
was developed in \cite{IS13}.

The case of $\sigma(Du, u, x, t)=u$ is directly related to the porous medium equation. It is quite a delicate issue to discuss the well-posedness of viscosity solutions to these types of equations since it is difficult to obtain the comparison principle, but there are well-known results when $F$ is a smooth linear operator. To be precise, the well-posedness of viscosity solutions to
\begin{equation*}
	u_t = u \Delta u + |Du|^2 \quad \quad \text{in } Q_1
\end{equation*}
was developed in \cite{BV05,GSV99}. When $F$ is a measurable linear operator, we can expect H\"older regularity of viscosity solutions, as in the case of uniformly parabolic equations. For symmetric matrices $A=(a^{ij}(x))$, $B=(b^{ij}(x))$ that satisfy the uniformly ellipticity condition, the H\"older regularity of viscosity solutions to 
\begin{equation} \label{eq:cls23}
	u_t = u a^{ij} D_{ij} u + b^{ij} D_i u D_j u \quad \text{in } Q_1
\end{equation}
was developed in \cite{CLS23}. They developed the modified Krylov-Safonov theory to be applicable to degenerate equations \eqref{eq:cls23} and derived their results. 

In this article, we focus on the case of $\sigma(Du,u,x,t)=u^\gamma$, and although studies on the model equation $u_t =u^\gamma \Delta u$ can be found in many literatures, studies on the fully nonlinear equations $u_t =u^\gamma F(D^2 u)$ are still in initial stages.
%==========================================================================
%							Strategy of the proof
%==========================================================================
\subsection{Strategy of the proof}
Given initial data $u_0$ that is positive in $\Omega$, the viscosity solution $u$ of \eqref{prob:main} is also positive in $\Omega_T$, and hence the equation in \eqref{prob:main} degenerates near $\partial_s \Omega_T$, so the interior regularity for \eqref{prob:main}  follows the results for uniformly parabolic equations. For this reason, \Cref{thm:solv} is proven by obtaining the boundary $C^{2,\alpha}$-regularity and extending it to the global regularity.

In many PDE problems, the $C^{1,\alpha}$ and $C^{2,\alpha}$-regularities have been proven through Schauder-type estimates. However, for the equation in \eqref{prob:main}, these Schauder-type estimates are not applicable due to
the structure of the product of $u^\gamma$ and $F$. In this paper, we provide a technique for handling this product structure so that Schauder-type estimates can be applied.

It is an important observation that the boundary $C^{1,\alpha}$-regularity can be obtained for functions in solution classes, and it can be shown that the function $u$ that satisfies problem \eqref{eq:s-class} belongs to a wider solution class from the boundary behavior of $u$. By the compactness argument, the boundary $C^{1,\alpha}$-regularity of functions in the wider solution class can be obtained from the regularity result of the solution class established in \cite{LY23}. On the other hand, the boundary regularity of functions in a solution class is at most $C^{1,\alpha}$, so the boundary $C^{2,\alpha}$-regularity cannot be obtained with these techniques. To solve this issue, we need additional information about the boundary behavior of $u$ to enable Schauder-type estimates. We converted the structure of the product $u^\gamma$ and $F$ into H\"older continuity of nonlinear operator with degeneracy $[\textnormal{dist}(\cdot,\partial \Omega)]^\gamma$ using the following property:
\begin{equation*}
	\frac{u}{\textnormal{dist}(\cdot,\partial \Omega)} \in C^{\alpha_0}( \overline{\Omega_T \cap Q_{\rho}}) 
	\qquad \text{and} \qquad 
	C_1 \leq \frac{u}{\textnormal{dist}(\cdot,\partial \Omega)} \leq C_2 \quad \text{on }  \in \overline{\Omega_T},
\end{equation*}
for some constants $C_1>0$, $C_2>0$, and $\alpha_0 \in (0,1)$. This property allows us to apply the Schauder-type estimates and obtain global regularity.

%==========================================================================
%						Organization of the paper
%==========================================================================
\subsection{Organization of the paper}
The paper is organized as follows. In \Cref{sec:pre}, we summarize several notations, definitions, and known results that will be used throughout the paper. 
In \Cref{sec:comp}, we discuss the comparion pinciple with concave solutions, which is used to obtain the boundary Lipschitz estimates.
\Cref{sec:c1a} is devoted to the proof of \Cref{thm:bdry_c1a}  (boundary $C^{1,\alpha}$-estimates). 
In \Cref{sec:dirichlet}, we prove \Cref{thm:bdry_c2a} (boundary $C^{2,\alpha}$-estimates), and then prove our main theorem, \Cref{thm:solv}. 
%==========================================================================
%
%							Preliminaries
% 
%==========================================================================
\section{Preliminaries} \label{sec:pre}
In this section, we summarize some basic notations and gather definitions and known regularity results used throughout the paper. 

For a point $(x_0,t_0) \in \mathbb{R}^{n+1}$ and $r>0$, we denote the cylinder and upper cylinder as
\begin{equation*}
	Q_r(x_0,t_0) \coloneqq B_r(x_0) \times (t_0 -r^2 , t_0] \quad \text{and} \quad
	Q_r^+(x_0,t_0) \coloneqq Q_r(x_0,t_0) \cap \{ x_n>0\}. 
\end{equation*}
where $B_r(x_0)$ is an open ball with center $x_0$ and radius $r$. For convenience, we denote 
\begin{equation*}
	Q_r = Q_r(0,0) \quad \text{and} \quad Q_r^+ = Q_r^+(0,0). 
\end{equation*}

For general domains, we denote $\Omega_T \coloneqq \Omega\times(-T,0]$ for a bounded domain $\Omega \subset \mathbb{R}^n$ and a time $T>0$.  We define the bottom, corner, side, and parabolic boundary as
\begin{align*}
	\partial_b \Omega_T \coloneqq \Omega \times \{ t= -T\}, \quad
	\partial_c \Omega_T \coloneqq \partial \Omega \times \{ t= -T \}, \quad
	\partial_s \Omega_T \coloneqq  \partial \Omega \times (-T,0),
\end{align*}
and 
$$\partial_p \Omega_T \coloneqq\partial_b \Omega_T  \cap \partial_c \Omega_T \cap \partial_s \Omega_T.$$

%==========================================================================
%							Holder Spaces
%==========================================================================
\subsection{Hölder continuity}
Defines the H\"older continuity of functions used in the description of the theorem.
%==========================================================================
%					Definition: Holder continuous for function
%==========================================================================
\begin{definition}
	Let $A \in \mathbb{R}^{n+1}$ be a bounded set and $f : A \to \mathbb{R}$ be a function. We say that $f$ is $C^{k,\alpha}$ at $(x_0,t_0) \in A$ (denoted by $f \in C^{k,\alpha}(x_0,t_0)$), if there exist constant $C>0$, $r>0$, and a polynomial $P_f(x,t)$ with $\deg P_f \leq k$ such that 
	\begin{equation}\label{cka_f}
		|f(x,t)-P_f(x,t)| \leq C(|x-x_0|+\sqrt{|t-t_0|})^{k+\alpha} \quad \text{for all } (x,t) \in A \cap Q_r(x_0,t_0).
	\end{equation}
	We define 
	\begin{align*}
		[f]_{C^{k,\alpha}(x_0,t_0)} &\coloneqq \inf \{C>0 \mid \eqref{cka_f} \text{ holds with } P_f(x,t) \text{ and } C \}, \\
		\|f\|_{C^{k,\alpha}(x_0,t_0)} &\coloneqq [f]_{C^{k,\alpha} (x_0,t_0)} + \sum_{i=0}^k |D^i P_f(x_0,t_0)|.
	\end{align*}
\end{definition}
We next describe the smoothness of $\partial \Omega$ in the pointwise sense. We note that the following deﬁnition, which was introduced in \cite{LZ20,LZ22}, does not depend on the graph representation of $\partial \Omega$.
%==========================================================================
%					Definition: Holder continuous for domain
%==========================================================================
\begin{definition}
	Let $\Omega \subset \mathbb{R}^{n}$ be a bounded domain. We say that $\partial \Omega$ is $C^{k,\alpha}$ at $x_0 \in \partial\Omega$ (denoted by $\partial \Omega \in C^{k,\alpha}(x_0)$), if there exist constant $C>0$, $r>0$, a new coordinate system $\{x_1,\cdots,x_n\}$, and a polynomial $P_{\partial \Omega}(x')$ with $\deg P_{\partial \Omega} \leq k$, $P_{\partial \Omega}(0')=0$, and $DP_{\partial \Omega}(0')=0$ such that $x_0=0$ in this coordinate system,
	\begin{equation}\label{cka_dom}
		\begin{aligned}
			&B_\rho \cap \{ (x',x_n) \mid x_n > P_{\partial \Omega}(x') + C|x'|^{k+\alpha} \} \subset B_r \cap \Omega, \\
			&B_\rho \cap \{ (x',x_n) \mid x_n < P_{\partial \Omega}(x') - C|x'|^{k+\alpha} \} \subset B_r \cap \Omega^c.
		\end{aligned}
	\end{equation}
	We define 
	\begin{align*}
		[\partial \Omega]_{C^{k,\alpha}(x_0)} &\coloneqq \inf \{C>0 \mid \eqref{cka_dom} \text{ holds with } P_{\partial \Omega}(x',t) \text{ and } C \}, \\
		\|\partial \Omega\|_{C^{k,\alpha}(x_0)} &\coloneqq [\partial \Omega]_{C^{k,\alpha} (x_0)}  + \sum_{i=2}^k |D^i P_{\partial \Omega}(0')|.
	\end{align*}
Moreover, if $\partial \Omega \in C^{k,\alpha}(a)$ for all $a\in \partial \Omega$, we define 
\begin{equation*}
	\|\partial \Omega\|_{C^{k,\alpha}}\coloneqq \sup_{a\in\partial \Omega} \|\partial \Omega\|_{C^{k,\alpha}(a)}.
\end{equation*}
\end{definition}
%==========================================================================
%							Viscosity Solutions
%==========================================================================
\subsection{Viscosity solutions}
We now introduce the concept of viscosity solutions of fully nonlinear parabolic equations:
\begin{equation}\label{eq:model}
	u_t = u^\gamma F(D^2u,x,t) ,
\end{equation}
where $0<\gamma <1$ and we assume that the fully nonlinear operator $F:\mathcal{S}^n \times \Omega_T \to \mathbb{R}$ is continuous and satisfies the following condition:
\begin{equation*}
	M \leq N \quad \Longrightarrow \quad F(M,x,t) \leq  F(N,x,t) \quad \text{for all } (x,t) \in \Omega_T. 
\end{equation*}
%==========================================================================
%							Definition: Test function
%==========================================================================
\begin{definition} [Test functions]
	Let $u$ be a continuous function in $\Omega_T$. The function $\varphi : \Omega_T \to \mathbb{R}$ is called \textit{test function} if it is $C^1$ with respect to $t$ and $C^2$ with respect to $x$.
	\begin{enumerate} [label=(\roman*)]
		\item We say that the test function $\varphi$ touches $u$ from above at $(x,t)$ if there exists an open neighborhood $U$ of $(x,t)$ such that 
		$$0 < u \leq \varphi  \quad \mbox{in } U \qquad  \mbox{and} \qquad u(x,t) = \varphi(x,t). $$
		\item We say that the test function $\varphi$ touches $u$ from below at $(x,t)$ if there exists an open neighborhood $U$ of $(x,t)$ such that 
		$$u \ge \varphi > 0 \quad \mbox{in } U \qquad  \mbox{and} \qquad u(x,t) = \varphi(x,t). $$
	\end{enumerate}
\end{definition}
We now give the definitions of viscosity solutions of \eqref{eq:model}. These definitions are similar to the definitions in \cite{BV05,GSV99} for linear equations and \cite{CLS23} for nonlinear equations.
%==========================================================================
%							Definition: Viscosity solution
%==========================================================================
\begin{definition}[Viscosity solutions] 
	Let $F:\mathcal{S}^n\times \Omega_T \to \mathbb{R}$ be a fully nonlinear parabolic operator. 
	\begin{enumerate} [label=(\roman*)]
		\item Let $u:\Omega_T \to\mathbb{R}$ be a nonnegative upper semicontinuous function. We say $u$ is a \textit{viscosity subsoution} of \eqref{eq:model} in $\Omega_T$ provided the following condition holds: if for any $(x,t) \in \Omega_T$ and any test function $\varphi$ touching $u$ from above at $(x,t)$, then
		\begin{equation*}
			\varphi_t (x,t) \leq \varphi (x,t)^\gamma F ( D^2 \varphi (x,t),x,t ) .
		\end{equation*}
		\item Let $u:\Omega_T \to \mathbb{R}$ be a nonnegative lower semicontinuous function. We say $u$ is a \textit{viscosity supersoution} of \eqref{eq:model} in $\Omega_T$  provided the following condition holds: if for any $(x,t) \in \Omega_T$ with $u(x,y)>0$ and any test function $\varphi$ touching $u$ from below at $(x,t)$, then
		\begin{equation*}
			\varphi_t (x,t) \ge \varphi (x,t)^\gamma F ( D^2 \varphi (x,t), x,t ) .
		\end{equation*}
	\end{enumerate}
\end{definition}
%==========================================================================
%					Structure hypotheses on the operator F
%==========================================================================
\subsection{Structure hypotheses on the operator $F$}  
We introduce the Pucci's extremal operators, which are important operators in the study of fully nonlinear equations.

\begin{definition} [Pucci's extremal operators] 
Given ellipticity constants $0<\lambda \leq \Lambda$ and $\mathcal{S}^n \coloneqq \{ M : \text{$M$ is an $n\times n$ real symmetric matrix\}}$, we define \textit{Pucci's extremal operators} as follows:
\begin{align*}
	\mathcal{M}_{\lambda, \Lambda}^{+}(M) \coloneqq \sup_{\lambda I \leq A \leq \Lambda I} \text{tr} (AM) \quad \text{and} \quad 
	\mathcal{M}^{-}_{\lambda, \Lambda}(M)  \coloneqq \inf_{\lambda I \leq A \leq \Lambda I} \text{tr} (AM).
\end{align*}
\end{definition}

We assume that the fully nonlinear operator $F:\mathcal{S}^n \times \overline{\Omega_T} \to \mathbb{R}$ satisfies some of the following conditions:
\begin{enumerate} [label=\text{(F\arabic*)}]
	\item \label{F1} $F$ is uniformly parabolic with $F(O_n,\cdot)\equiv0$; that is,  there exist constants $0<\lambda \leq \Lambda$ such that \begin{equation*}
	\mathcal{M}^{-}_{\lambda, \Lambda}(M-N) \leq F(M,x,t) - F(N,x,t) \leq \mathcal{M}^{+}_{\lambda, \Lambda}(M-N)
\end{equation*}
for all $M,N \in \mathcal{S}^n$ and $(x,t) \in\overline{\Omega_T}$.
	\item \label{F2} $F$ is concave or convex.
	\item \label{F3} Let $\alpha \in (0,1)$ and $(x_0,t_0)\in \overline{\Omega_T}$. There exist a constant $r_0>0$ and a nonnegative function $\beta^1, \beta^2 \in C^{\alpha}(\overline{\Omega_T})$ such that $\beta^1(x_0,t_0)=0=\beta^2(x_0,t_0)$ and
\begin{equation*}
	|F(M,x,t)-F(M,x_0,t_0)| \leq \beta^1(x,t) \|M\| + \beta^2(x,t)
\end{equation*}
for all $M\in\mathcal{S}^n$ and $(x,t) \in \overline{\Omega_T \cap Q_{r_0}(x_0,t_0)}$.
\end{enumerate}
%==========================================================================
%						Known Regularity Result
%==========================================================================
\subsection{Known regularity results}
Here we gather some known regularity results that we will need later on. 
%==========================================================================
%							Result: KL13 
%==========================================================================
\begin{theorem} \cite[Theorem 3.4]{KL13} \label{result:kl13}
Suppose that $F$ is positively homogeneous of degree 1, i.e., 
\begin{equation*}
	F(rM) = r F(M) \quad \text{for all } r>0 \text{ and } M\in \mathcal{S}^n.
\end{equation*}
Then the Dirichlet problem
\begin{equation*} 
	\left\{\begin{aligned}
		F(D^2 \phi) + \gamma^{-1}(1-\gamma) \phi^{1-\gamma}  &= 0 && \text{in } \Omega\\
		\phi &= 0 && \text{on } \partial \Omega
	\end{aligned} \right.
\end{equation*}
has a unique viscosity solution $\phi \in C^{0,1} (\overline{\Omega})\cap C^{1,\alpha}(\Omega)$ that satisfies
\begin{equation*}
	C_1 \,\textnormal{dist}(x,\partial \Omega) \leq \phi(x) \leq C_2 \,\textnormal{dist}(x,\partial \Omega) \quad \text{for all } x \in \overline{\Omega},
\end{equation*}
where $C_1>0$ and $C_2>0$ are constants depending only on $n$, $\lambda$, $\Lambda$, $\gamma$, and $\|\phi\|_{L^\infty(\Omega)}$.
\end{theorem}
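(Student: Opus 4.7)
The plan is to combine a regularization of the singular nonlinearity with explicit sub- and supersolutions, and then pass to the limit using interior fully nonlinear theory. Because $r \mapsto r^{1-\gamma}$ is only H\"older (not Lipschitz) at $r = 0$ and $\phi$ must vanish on $\partial\Omega$, Perron's method cannot be applied directly; I would first solve the regularized Dirichlet problem
\begin{equation*}
F(D^2\phi_\epsilon) + \gamma^{-1}(1-\gamma)(\phi_\epsilon + \epsilon)^{1-\gamma} = 0 \text{ in } \Omega, \qquad \phi_\epsilon = 0 \text{ on } \partial\Omega,
\end{equation*}
whose zero-order term is now bounded and Lipschitz in $\phi_\epsilon$. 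Existence and uniqueness of $\phi_\epsilon \in C(\overline\Omega)$ then follow from Perron's method together with the Crandall-Ishii-Lions comparison principle, provided appropriate sub- and supersolutions are available.

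\medskip

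The supersolution is built from a Pucci barrier. Let $w \in C(\overline\Omega)\cap C^{1,\alpha}(\Omega)$ satisfy $\mathcal{M}^{+}_{\lambda,\Lambda}(D^2 w) = -1$ in $\Omega$ with $w = 0$ on $\partial\Omega$, which is classical and yields $w \asymp \textnormal{dist}(\cdot,\partial\Omega)$. By positive homogeneity of $F$, $\overline\phi \coloneqq C_2 w$ satisfies $F(D^2\overline\phi) \leq C_2 \mathcal{M}^{+}_{\lambda,\Lambda}(D^2 w) = -C_2$, so for $C_2$ large enough (depending on $\|w\|_\infty$ and $\gamma$) the supersolution inequality holds uniformly in $\epsilon$. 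For the subsolution I would set
\begin{equation*}
\underline\phi(x) \coloneqq c_1 \bigl( \widetilde d(x) + \kappa\, \widetilde d(x)^{1+\delta} \bigr),
\end{equation*}
where $\widetilde d$ is a $C^2$-regular extension of $\textnormal{dist}(\cdot,\partial\Omega)$ to $\Omega$, and $\delta \in (0,1)$, $\kappa, c_1 > 0$ are chosen small. A direct computation shows that $D^2(\widetilde d^{1+\delta})$ has a principal eigenvalue of order $\delta(1+\delta)\widetilde d^{\delta - 1}$ in the normal direction that blows up as $\widetilde d \to 0^+$; by uniform ellipticity this forces $F(D^2\underline\phi) \to +\infty$ near $\partial\Omega$, and the subsolution inequality $F(D^2\underline\phi) + \gamma^{-1}(1-\gamma)\underline\phi^{1-\gamma} \geq 0$ holds in a boundary layer. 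In the interior, where $\widetilde d$ is bounded below, smallness of $c_1$ makes the zero-order term (of order $c_1^{1-\gamma}$) dominate the elliptic term (of order $c_1$), and the inequality again holds. Comparison then delivers the uniform bound $C_1\,\textnormal{dist}(\cdot,\partial\Omega) \leq \phi_\epsilon \leq C_2\,\textnormal{dist}(\cdot,\partial\Omega)$.

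\medskip

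To pass to the limit, on each compact $K \subset \Omega$ the equation is uniformly elliptic with bounded right-hand side, so Krylov-Safonov together with the interior $C^{1,\alpha}$ estimate for uniformly elliptic fully nonlinear equations gives locally uniform $C^{1,\alpha}$ bounds on $\phi_\epsilon$. Extracting a convergent subsequence and invoking stability of viscosity solutions yields $\phi \in C^{1,\alpha}(\Omega)$ solving the original problem; the two-sided linear bound $C_1 d \leq \phi \leq C_2 d$, combined with vanishing tangential derivatives on $\partial\Omega$ and the interior $C^{1,\alpha}$ bound, upgrades this to $\phi \in C^{0,1}(\overline\Omega)$. Uniqueness follows from comparison on compact exhaustions of $\Omega$, where $\phi > 0$ and $r \mapsto r^{1-\gamma}$ is Lipschitz, combined with the joint vanishing of two candidate solutions on $\partial\Omega$. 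The main obstacle is the construction of the lower barrier: since the forcing $\phi^{1-\gamma}$ vanishes as $\phi \to 0$, no scalar multiple of a linear profile $c\, d$ can be a subsolution, and the Hessian blow-up $\widetilde d^{\delta - 1}$ furnished by the correction $\kappa\, \widetilde d^{1+\delta}$ must be tuned against $\gamma$, $\lambda$, $\Lambda$ and the free constants $c_1, \kappa, \delta$ so that the subsolution inequality holds simultaneously in the boundary layer and in the interior.
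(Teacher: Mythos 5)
This theorem is imported from Kim--Lee \cite[Theorem 3.4]{KL13} and the paper under review contains no proof of its own, so your proposal is evaluated on its own merits.

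Your barrier constructions are reasonable after tuning, and regularizing the singularity at $\phi = 0$ and passing to the limit via interior estimates is a sensible outline. However, there is a genuine gap in your handling of comparison and uniqueness. The operator $G(r,X) \coloneqq F(X) + \gamma^{-1}(1-\gamma)(r+\epsilon)^{1-\gamma}$ is \emph{not proper}: the zero-order term is strictly \emph{increasing} in $r$ since $1-\gamma>0$. Consequently the Crandall--Ishii--Lions comparison principle, and with it Perron's method, do not apply to the regularized problem as you claim. Existence of $\phi_\epsilon$ should instead be obtained by monotone iteration starting at the supersolution: solve $F(D^2\phi_{n+1}) = -\gamma^{-1}(1-\gamma)(\phi_n + \epsilon)^{1-\gamma}$ at each step, each such problem having a fixed right-hand side and hence being proper, so the iterates decrease monotonically and are pinched between your barriers.

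The same improperness is fatal for your final uniqueness step. Even on a compact exhaustion where $\phi$ is bounded away from zero and $r\mapsto r^{1-\gamma}$ is locally Lipschitz, the zero-order term remains strictly increasing in $\phi$, so ordinary comparison between two candidate solutions gives nothing. What actually forces uniqueness here is a scaling argument using \emph{both} the degree-1 homogeneity of $F$ \emph{and} the strict sublinearity $1-\gamma<1$, in the spirit of Krasnoselskii/Brezis--Oswald. Sketch: if $\phi_1,\phi_2$ are two positive solutions, the two-sided linear barrier bounds make $\tau^*\coloneqq\sup_\Omega \phi_1/\phi_2$ finite; if $\tau^*>1$, then $\tau^*\phi_2$ is a strict supersolution because
\begin{equation*}
F\bigl(D^2(\tau^*\phi_2)\bigr) + \gamma^{-1}(1-\gamma)(\tau^*\phi_2)^{1-\gamma} = \gamma^{-1}(1-\gamma)\,\phi_2^{1-\gamma}\bigl((\tau^*)^{1-\gamma}-\tau^*\bigr) < 0,
\end{equation*}
and touching $\tau^*\phi_2$ against $\phi_1$ at an interior contact point yields $\tau^*\leq(\tau^*)^{1-\gamma}$, hence $\tau^*\leq 1$, a contradiction. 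It is this structural homogeneity-plus-sublinearity mechanism, not the standard comparison principle, that must be invoked; your proposal does not identify it and asserts comparison where none is available.
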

%==========================================================================
%						Result: LY24-1
%==========================================================================
\begin{theorem} \cite[Lemma 3.4]{LY23} \label{result:ly24}
	Let $u \in C(\overline{Q_1^+})$ satisfy $u=0$ on $\{x_n=0\}$ and 
\begin{equation*}
	x_n^\gamma \mathcal{M}_{\lambda, \Lambda}^{-}(D^2 u)  \leq u_t \leq x_n^\gamma \mathcal{M}_{\lambda, \Lambda}^{+}(D^2 u)
\end{equation*}
in the viscosity sense in $Q_1^+$. Then there exist $\overline{\alpha}\in(0,1)$ depending only on $n$, $\lambda$, $\Lambda$, and $\gamma$  and a constant $a \in \mathbb{R}$ such that
\begin{equation*}
	|u(x,t)-ax_n| \leq C_*\|u\|_{L^{\infty}(Q_1^+)}(|x| + \sqrt{|t|})^{\overline{\alpha}} x_n \quad \text{for all } (x,t) \in \overline{Q_{1/2}^+}
\end{equation*}
and $|a| \leq C_*$, where $C_*>1$ is a constant depending only on $n$, $\lambda$, $\Lambda$, $\gamma$, and $\overline{\alpha}$.
\end{theorem}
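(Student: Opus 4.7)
The plan is to prove this boundary $C^{1,\bar\alpha}$-type decay at the flat boundary $\{x_n=0\}$ by a standard compactness/iteration scheme adapted to the degeneracy $x_n^\gamma$, using as a black box only the classical boundary $C^{1,\bar\alpha_0}$-estimate for the uniformly elliptic Pucci $S$-class at a flat boundary.

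First, I would establish a boundary Lipschitz estimate: there exists $C_L=C_L(n,\lambda,\Lambda,\gamma)$ such that every $u$ in the given $S$-class vanishing on $\{x_n=0\}$ satisfies
\[
	|u(x,t)|\leq C_L\|u\|_{L^\infty(Q_1^+)}\,x_n \quad \text{on }\overline{Q_{3/4}^+}.
\]
I would obtain this by a barrier of the form $\phi(x,t)=A(x_n-\mu|x'|^2-\nu x_n^2)+Bt$, choosing $A,B,\mu,\nu$ so that $\phi_t\geq x_n^\gamma\mathcal{M}^+_{\lambda,\Lambda}(D^2\phi)$ in $Q_{3/4}^+$ and $\phi\geq u$ on $\partial_p Q_{3/4}^+\setminus\{x_n=0\}$, then reading off the upper bound near $\{x_n=0\}$ (and doing the same with a lower barrier). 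The degeneracy $x_n^\gamma$ actually helps because it vanishes at $\{x_n=0\}$, so the linear-in-$x_n$ profile of the barrier is not destroyed by the second-order correction terms.

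Second, I would prove an improvement-of-flatness lemma by compactness. Fix $\rho\in(0,1/4)$ small. The claim is: there exist $\delta_0>0$, $C>0$, and $\bar\alpha\in(0,1)$ such that whenever $\|u\|_{L^\infty(Q_1^+)}\leq 1$ and $\sup_{Q_1^+}|u-a_0 x_n|\leq\delta$ for some $|a_0|\leq C_L$ and $\delta\leq\delta_0$, there is $a_1$ with $|a_1-a_0|\leq C\delta$ and $\sup_{Q_\rho^+}|u-a_1 x_n|\leq \rho^{1+\bar\alpha}\delta$. Assuming the claim fails, a normalized sequence $(u_j)$ violates the conclusion; using the Lipschitz estimate together with an interior Krylov--Safonov type Hölder estimate, I would extract a locally uniform limit $u_\infty$ on $\overline{Q_1^+}$ that vanishes on $\{x_n=0\}$ and still lies in the $S$-class for $x_n^\gamma\mathcal{M}^\pm_{\lambda,\Lambda}$. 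The crucial point is that after the rescaling used in the failing sequence the degeneracy factor carries a prefactor shrinking to $0$, so in the viscosity limit one has simultaneously $(u_\infty)_t\geq 0$ and $(u_\infty)_t\leq 0$; hence $u_\infty$ is time-independent and, in $\{x_n>0\}$, satisfies $\mathcal{M}^-_{\lambda,\Lambda}(D^2 u_\infty)\leq 0\leq \mathcal{M}^+_{\lambda,\Lambda}(D^2 u_\infty)$, i.e.\ it lies in the uniformly elliptic Pucci $S$-class. The classical flat-boundary $C^{1,\bar\alpha_0}$-estimate for that class then produces $a_\infty$ with $|u_\infty(x)-a_\infty x_n|\leq C|x|^{1+\bar\alpha_0}$, which for any $\bar\alpha<\bar\alpha_0$ and $\rho$ small contradicts the assumed failure.

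Finally, I would iterate: starting with $a_0=0$ and $\delta=C_L\|u\|_{L^\infty(Q_1^+)}$ from the Lipschitz estimate, apply the improvement at scales $\rho^k$ to the rescalings $v_k(y,s)=\rho^{-k}u(\rho^k y,\rho^{2k}s)$ (which satisfy the $S$-class inequalities with degeneracy constant $\rho^{k\gamma}y_n^\gamma$, so the improvement still applies), producing $a_k$ with $|a_k-a_{k-1}|\leq C\rho^{(k-1)\bar\alpha}\|u\|_{L^\infty(Q_1^+)}$ and $\sup_{Q_{\rho^k}^+}|u-a_k x_n|\leq C\rho^{k(1+\bar\alpha)}\|u\|_{L^\infty(Q_1^+)}$. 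Summability of the differences yields a limit $a=\lim a_k$ with $|a|\leq C_*$, and reading the estimate at intermediate scales $r\in(\rho^{k+1},\rho^k]$ gives the stated decay (the $\sqrt{|t|}$ reflects the parabolic scaling of $Q_r$). The principal obstacle I foresee is the passage $(u_\infty)_t=0$ in the viscosity sense together with the accompanying compactness: one needs uniform modulus-of-continuity estimates for the rescaled family $v_j$ that do not degenerate as the prefactor $\rho^{j\gamma}$ shrinks, which requires an interior Hölder estimate uniform in this vanishing coefficient, in addition to the boundary Lipschitz bound that supplies the equicontinuity at $\{x_n=0\}$.
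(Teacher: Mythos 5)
The compactness step as written is logically circular. In your improvement-of-flatness lemma you fix $\rho$, assume $\|u\|_{L^\infty(Q_1^+)}\leq 1$ and $\sup_{Q_1^+}|u-a_0x_n|\leq\delta$, and argue by contradiction with $\delta_j\to 0$. To extract a contradiction one must work with the normalized functions $w_j\coloneqq(u_j-a_0^jx_n)/\delta_j$; but since $D^2(a_0^jx_n)=0$ and the Pucci extremal operators are positively $1$-homogeneous, each $w_j$ satisfies exactly the same inequalities
\begin{equation*}
x_n^\gamma\mathcal{M}^-_{\lambda,\Lambda}(D^2w_j)\leq\partial_tw_j\leq x_n^\gamma\mathcal{M}^+_{\lambda,\Lambda}(D^2w_j)
\end{equation*}
with no small prefactor in front of $x_n^\gamma$. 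You have conflated the factor $\rho^{k\gamma}$ that appears when rescaling the domain in the iteration step with the normalization by $\delta_j$ in the compactness step, which leaves the coefficient untouched. The limit $w_\infty$ therefore lies in the same degenerate $S$-class, and the claimed simplification to $(u_\infty)_t=0$ together with the uniformly elliptic Pucci $S$-class never occurs; the boundary regularity one would then need for $w_\infty$ is exactly the theorem being proved. A repair would have to carry the degeneracy coefficient as a parameter $\epsilon$ in the lemma (proving the improvement for $\epsilon\,y_n^\gamma\mathcal{M}^\pm_{\lambda,\Lambda}$ uniformly in small $\epsilon$, and first descending to a scale $\rho_1$ with $\rho_1^\gamma\leq\epsilon_0$), but this is itself delicate: a contradiction sequence with $\epsilon_j\to\epsilon_\infty>0$ is still degenerate and still circular, while if $\epsilon_j\to 0$ the interior Krylov--Safonov modulus degenerates (the limiting equation $\partial_tw=0$ has no spatial regularity), so the interior equicontinuity you invoke is not available without a quantitative estimate adapted to the degeneracy. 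You flag this last point as the ``principal obstacle'' but do not resolve it, and it is precisely what prevents a na\"ive compactness proof of this statement.

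Two secondary problems. Your upper barrier $\phi(x,t)=A(x_n-\mu|x'|^2-\nu x_n^2)+Bt$ with $B\geq 0$ gives $\phi(x',0,t)=-A\mu|x'|^2+Bt\leq 0=u$ on the flat part of $\partial_pQ_{3/4}^+$, so it lies \emph{below} $u$ there and cannot serve as an upper comparison function; the quadratic corrections should be multiplied by $x_n$ so that the barrier vanishes on $\{x_n=0\}$ while still dominating $\|u\|_{L^\infty}$ on the lateral and bottom boundary (and one must then recheck that the resulting $D^2\phi$ still has $\mathcal{M}^+_{\lambda,\Lambda}(D^2\phi)\leq 0$ on $Q_{3/4}^+$). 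Also, your iteration yields only $|u-ax_n|\leq C\|u\|_{L^\infty}(|x|+\sqrt{|t|})^{1+\bar\alpha}$, whereas the stated conclusion has the stronger weighted form $(|x|+\sqrt{|t|})^{\bar\alpha}x_n$; to get the extra $x_n$ factor one must apply the boundary Lipschitz estimate once more, at each scale, to $u-a_kx_n$ (which is again an $S$-class function vanishing on $\{x_n=0\}$), upgrading $\rho^{k(1+\bar\alpha)}$ on $Q_{\rho^k}^+$ to $C\rho^{k\bar\alpha}x_n$.
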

%==========================================================================
%							Result: LY24-3
%==========================================================================
\begin{theorem} \cite[Theorem 1.1]{LY23} \label{result:ly24-3}
	Let $\alpha \in(0, \overline{\alpha})$ with $\alpha \leq 1-\gamma$. Suppose that $f\in C(Q_1^+) \cap L^{\infty}(Q_1^+)$ and $u \in C(\overline{Q_1^+})$ satisfy  $u=0$ on $\{x_n=0\}$ and
	\begin{equation} \label{s*_class}
		x_n^\gamma \mathcal{M}_{\lambda, \Lambda}^{-}(D^2 u) -\|f\|_{L^\infty(Q_1^+)} \leq u_t \leq x_n^\gamma \mathcal{M}_{\lambda, \Lambda}^{+}(D^2 u) + \|f\|_{L^\infty(Q_1^+)} 
	\end{equation}
in the viscosity sense in $Q_1^+$. Then there exists a constant $a \in \mathbb{R}$ such that
	\begin{align*}
		|u(x,t)-ax_n|\leq C_\diamond(\|u\|_{L^{\infty}(Q_1^+)}+\|f\|_{L^{\infty}(Q_1^+)} )(|x|+\sqrt{|t|})^{1+\alpha} \quad \text{for all $(x,t) \in \overline{Q_{1/2}^+}$}
	\end{align*}
	and 
	\begin{equation*}
		|a| \leq C(\|u\|_{L^{\infty}(Q_1^+)}+\|f\|_{L^{\infty}(Q_1^+)}),
	\end{equation*}
	where $C_\diamond>0$ is a constant depending only on $n$, $\lambda$, $\Lambda$, $\gamma$, and $\alpha$.
\end{theorem}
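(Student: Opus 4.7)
The plan is to prove the estimate by an iterative improvement-of-flatness scheme adapted to the intrinsic scaling $(x,t)\mapsto(rx,r^{2-\gamma}t)$ of the equation $u_t=x_n^\gamma F(D^2u)$, i.e., carried out on the \emph{non-standard} parabolic cylinders $P_r:=B_r(0)\times(-r^{2-\gamma},0]$ (note $P_1=Q_1$ but $P_r\supsetneq Q_r$ for $r<1$). At each step, Theorem \ref{result:ly24}---the analogous estimate for the homogeneous equation---supplies the required regularity in a compactness-and-contradiction step.

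After normalizing $u$ by $M:=\|u\|_{L^\infty(Q_1^+)}+\delta_0^{-1}\|f\|_{L^\infty(Q_1^+)}$ for the universal $\delta_0>0$ produced by the lemma below, the central step is the following \emph{Improvement Lemma}: there exist $\eta\in(0,1/2)$ and $\delta_0>0$ depending only on $n,\lambda,\Lambda,\gamma,\alpha$ such that if $v\in C(\overline{P_1^+})$ satisfies $\|v\|_{L^\infty(P_1^+)}\le 1$, $v=0$ on $\{x_n=0\}\cap B_1$, and
\[
x_n^\gamma \mathcal{M}_{\lambda,\Lambda}^{-}(D^2 v)-\delta_0 \;\le\; v_t \;\le\; x_n^\gamma \mathcal{M}_{\lambda,\Lambda}^{+}(D^2 v)+\delta_0
\]
in the viscosity sense in $P_1^+$, then there exists $a\in\mathbb{R}$ with $|a|\le C_*$ (the constant of Theorem \ref{result:ly24}) satisfying $\sup_{P_\eta^+}|v-ax_n|\le\eta^{1+\alpha}$. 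The proof is by contradiction: a sequence of counterexamples with inhomogeneities $\delta_k\to 0$ produces, via boundary H\"older estimates for the inhomogeneous $\mathcal{S}$-class (furnishing equicontinuity near $\{x_n=0\}$) together with interior Krylov--Safonov (applicable in subdomains bounded away from $\{x_n=0\}$ where the equation is uniformly parabolic), a uniformly convergent subsequence $v_k\to v_\infty$ solving the homogeneous equation in $P_1^+\supset Q_1^+$. Theorem \ref{result:ly24} applied to $v_\infty$ yields $|v_\infty-a_\infty x_n|\le C_*(|x|+\sqrt{|t|})^{\overline{\alpha}}x_n$ on $Q_{1/2}^+$; since $P_\eta^+\subset Q_{1/2}^+$ for $\eta$ small, this bound becomes $\le C_*(2\eta^{1-\gamma/2})^{\overline{\alpha}}\eta\le\tfrac12\eta^{1+\alpha}$ for a suitable $\eta$, and uniform convergence contradicts the counterexample.

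Iterating the Improvement Lemma at scales $r_k:=\eta^k$, I construct by induction $a_k\in\mathbb{R}$ with $|a_{k+1}-a_k|\le C_* r_k^\alpha$ and $\sup_{P_{r_k}^+}|u-a_k x_n|\le r_k^{1+\alpha}$. The inductive step uses the rescaled function
\[
v_k(x,t):=\frac{u(r_k x,r_k^{2-\gamma}t)-a_k r_k x_n}{r_k^{1+\alpha}}\quad\text{on }P_1^+,
\]
which by a direct computation based on the $1$-homogeneity of the Pucci operators satisfies the $\mathcal{S}$-class inequality in $P_1^+$ with effective inhomogeneity $r_k^{1-\gamma-\alpha}\|f\|_{L^\infty(Q_1^+)}/M\le r_k^{1-\gamma-\alpha}\delta_0\le\delta_0$---this is precisely where $\alpha\le 1-\gamma$ enters. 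Summing the telescoping bounds gives $a_k\to a\in\mathbb{R}$ with $|a|\le C_*/(1-\eta^\alpha)\le C$; for any $(x,t)\in Q_{1/2}^+$ one sets $r:=2\max(|x|,|t|^{1/(2-\gamma)})\le 2(|x|+\sqrt{|t|})$ so that $(x,t)\in P_r^+$, locates $k$ with $r\in(r_{k+1},r_k]$, and combines $|u-a_k x_n|\le r_k^{1+\alpha}$ with $|a-a_k||x_n|\lesssim r_k^\alpha\cdot r$ to conclude $|u-ax_n|\le C(|x|+\sqrt{|t|})^{1+\alpha}$ on $\overline{Q_{1/2}^+}$.

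The hard part is the mismatch between the anisotropic scaling symmetry of the equation and the isotropic parabolic cylinders in both the statement and in Theorem \ref{result:ly24}: the iteration lives naturally on $P_r$, while the homogeneous input provides an estimate only on the standard cylinder $Q_{1/2}^+$. Ensuring that $C_*(|x|+\sqrt{|t|})^{\overline{\alpha}}x_n$ is strong enough on $P_\eta^+$ (which extends further in time than $Q_\eta^+$) to close the improvement forces the two hypotheses $\alpha<\overline{\alpha}$ and $\alpha\le 1-\gamma$ to interact with the effective decay rate $\eta^{1+\overline{\alpha}(1-\gamma/2)}$ dictated by the non-standard geometry, and is the one place where the interplay of parameters must be tracked carefully. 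A secondary but routine ingredient is the boundary H\"older estimate for the inhomogeneous $\mathcal{S}$-class used for equicontinuity in the compactness step, which can be obtained either by direct comparison with linear/power barriers or by a $C^{0,\alpha}$-version of the argument behind Theorem \ref{result:ly24}.
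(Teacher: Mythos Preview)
This theorem is not proved in the present paper---it is quoted from \cite{LY23}---so there is no proof here to compare against directly. Your compactness-plus-iteration scheme under the intrinsic scaling is nonetheless exactly the methodology the paper itself deploys for its own boundary estimates (Lemmas~\ref{lem1:gen}--\ref{lem2:gen}), and is the natural strategy.

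There is, however, a real gap precisely where you flag ``the hard part.'' From Theorem~\ref{result:ly24} you obtain $|v_\infty-a_\infty x_n|\le C_*(|x|+\sqrt{|t|})^{\overline\alpha}x_n$ on $Q_{1/2}^+$; on the anisotropic cylinder $P_\eta^+$ one has $x_n\le\eta$ but only $|x|+\sqrt{|t|}\le 2\eta^{1-\gamma/2}$, so the right-hand side is at most $C_*\,2^{\overline\alpha}\eta^{1+\overline\alpha(1-\gamma/2)}$. Forcing this below $\tfrac12\eta^{1+\alpha}$ requires $\alpha<\overline\alpha(1-\gamma/2)$, which is \emph{strictly stronger} than the stated hypothesis $\alpha<\overline\alpha$ with $\alpha\le1-\gamma$ (for instance $\gamma=\overline\alpha=\tfrac12$ gives only $\alpha<\tfrac38$, not $\alpha<\tfrac12$). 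Your assertion that the bound holds ``for a suitable $\eta$'' is therefore unjustified in the full parameter range. Since the iteration genuinely must live on the anisotropic cylinders $P_r$ (as you correctly note, iterating on standard $Q_r$ fails because the rescaled time interval collapses to zero), the remedy is not a reorganization of the induction but an upgrade of the compactness input: one needs Theorem~\ref{result:ly24}, or its proof, recast in the intrinsic metric $|x|+|t|^{1/(2-\gamma)}$, so that the homogeneous decay on $P_\eta^+$ is of order $\eta^{1+\overline\alpha}$ rather than $\eta^{1+\overline\alpha(1-\gamma/2)}$. As written, your argument establishes the theorem only for the restricted range $\alpha<\overline\alpha(1-\gamma/2)$.
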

%==========================================================================
%						Remark: x_n^{1+a}-growth rate
%==========================================================================
\begin{remark} \label{rem:xn1a-growth}
Let $u$ be a function that satisfies the assumptions of \Cref{result:ly24-3}. By \Cref{result:ly24-3}, we have $u \in C^{1,\alpha}(0,0)$ and 
\begin{equation} \label{inq:xn1+a}
	|u(0',x_n,0)-u_n(0,0)x_n|\leq C_\diamond(\|u\|_{L^{\infty}(Q_1^+)}+\|f\|_{L^{\infty}(Q_1^+)} )x_n^{1+\alpha} \quad \text{for all $x_n  \in [0,1/2]$}.
\end{equation}
Let $v(x,t)=u(x'+y',x_n,t+\tau)$ be a translation for $y' \in \mathbb{R}^{n-1}$. Then $v$ satisfies $v=0$ on $\{x_n=0\}$ and \eqref{s*_class} in $Q_r^+$ for some $r \in (0,1)$ and $\tilde{f}(x,t) \coloneqq f(x'+y',x_n,t+\tau)$ instead of $f$. Thus, the estimate \eqref{inq:xn1+a} holds for $v$, i.e.,  $v\in C^{1,\alpha}(0,0)$ and 
\begin{equation*}
	|u(y',x_n,\tau)-u_n(y',0,\tau)x_n|\leq C_\diamond(\|u\|_{L^{\infty}(Q_1^+)}+\|f\|_{L^{\infty}(Q_1^+)} )x_n^{1+\alpha} \quad \text{for all $x_n  \in [0,r/2]$}.
\end{equation*}

\end{remark}
%==========================================================================
%							Result: LY24-2
%==========================================================================
\begin{theorem} \cite[Lemma 4.1]{LY23} \label{result:ly24-2}
Suppose that $F:\mathcal{S}^n \to \mathbb{R}$ is concave or convex. Let $u \in C(\overline{Q_1^+})$ be a viscosity solution of  
\begin{equation*} 
	\left\{\begin{aligned}
		u_t&=x_n^{\gamma} F(D^2 u) && \text{in } Q_1^+ \\
		u&=0 && \text{on } \{x_n=0\}.
	\end{aligned}\right.
\end{equation*}
Then $u \in C^{2,\alpha}(0,0)$ for any $\alpha \in(0,\overline{\alpha})$ with $\alpha\leq1-\gamma$, i.e., there exists a polynomial $P(x)$ with $\deg P \leq 2$ such that
\begin{equation*}
	|u(x,t)-P(x)| \leq C_\star( \|u\|_{L^{\infty}(Q_1^+)} + |F(O_n)| )  |x|^{1+\alpha} x_n \quad \text{for all } (x,t) \in \overline{Q_{1/2}^+},
\end{equation*}
$\|P\| \leq C_\star$ and $F(D^2 P) =0$, where $C_\star>1$ is a constant depending only on $n$, $\lambda$, $\Lambda$, $\gamma$, and $\alpha$. 
\end{theorem}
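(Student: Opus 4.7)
The plan is to prove this theorem by a Caffarelli-style compactness-and-iteration scheme adapted to the degenerate weight $x_n^\gamma$. After normalizing so that $\|u\|_{L^\infty(Q_1^+)}+|F(O_n)|\leq 1$, an initial parabolic rescaling $v(x,t)\coloneqq u(r_0 x,r_0^{2-\gamma}t)$ with small $r_0$ produces an equation governed by $\widetilde F(M)\coloneqq r_0^2 F(r_0^{-2}M)$, which remains in the same ellipticity and concavity/convexity class but has $|\widetilde F(O_n)|=r_0^2|F(O_n)|$ arbitrarily small. It therefore suffices to prove the result under the additional assumption $|F(O_n)|\leq\delta$ for a small universal $\delta$.

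The core ingredient is an approximation lemma: there exist universal $\rho\in(0,1)$ and $C_\#>0$ such that for every $\eta>0$ there is $\delta>0$ with the property that whenever $G$ is concave (or convex) with the same ellipticity as $F$, $|G(O_n)|\leq\delta$, and $v\in C(\overline{Q_1^+})$ solves $v_t=x_n^\gamma G(D^2 v)$ with $v\equiv 0$ on $\{x_n=0\}$ and $\|v\|_\infty\leq 1$, then there exists a quadratic polynomial $P$ with $P(x',0)\equiv 0$, $G(D^2 P)=0$, $\|P\|\leq C_\#$, and $\|v-P\|_{L^\infty(Q_\rho^+)}\leq\eta$. I would prove this by contradiction. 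A violating sequence $(v_k,G_k)$ has $|G_k(O_n)|\to 0$; equicontinuity up to $\{x_n=0\}$ comes from \Cref{result:ly24-3} applied at each flat-boundary point with $f\equiv x_n^\gamma|G_k(O_n)|$, combined with interior parabolic Krylov--Safonov estimates in $\{x_n\geq\varepsilon\}$. Extracting subsequences yields a limit $v_\infty$ satisfying $(v_\infty)_t=x_n^\gamma G_\infty(D^2 v_\infty)$ with $G_\infty$ concave/convex, $G_\infty(O_n)=0$, and $v_\infty\equiv 0$ on $\{x_n=0\}$. The admissible polynomials for this limit are precisely those of the form $P(x)=x_nL(x)$ with $L$ affine (an $n$-parameter family), and the constraint $G_\infty(D^2 P)=0$ cuts out an $(n-1)$-parameter subfamily. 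The $C^{1,\alpha}$-expansion $v_\infty=ax_n+O((|x|+\sqrt{|t|})^{1+\alpha})$ from \Cref{result:ly24-3} pins down the linear part of $P$, while concavity/convexity and ellipticity of $G_\infty$ permit selecting the quadratic parameters — by a further rescaling of $v_\infty$ and a Liouville-type argument for solutions of the limit equation with zero flat-boundary data — to match $v_\infty$ to second order at the origin with uniformly bounded coefficients, yielding the desired contradiction.

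With the approximation lemma in hand, the iteration uses the algebraic observation that if $F(D^2 P)=0$, then $u-P$ solves $(u-P)_t=x_n^\gamma\widetilde F(D^2(u-P))$ with $\widetilde F(M)\coloneqq F(M+D^2 P)$ in the same class and $\widetilde F(O_n)=0$. Consequently the rescaling
\begin{equation*}
u_1(x,t)\coloneqq \rho^{-(2+\alpha)}\bigl[u(\rho x,\rho^{2-\gamma}t)-P_1(\rho x)\bigr]
\end{equation*}
solves $(u_1)_t=x_n^\gamma F_1(D^2 u_1)$ with $F_1(M)=\rho^{-\alpha}\widetilde F(\rho^\alpha M)$ in the same class, $F_1(O_n)=0$, and $\|u_1\|_\infty\leq 1$. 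Taking $\eta=\rho^{2+\alpha}$ in the lemma (after choosing $\rho$ small enough to absorb $C_\#$), iterated application produces polynomials $P_k$ whose pullbacks to the original scale form a geometrically Cauchy sequence in coefficients, converging to the desired $P$ with $\|P\|\leq C_\star$, $F(D^2 P)=0$, and $\sup_{Q_r^+}|u-P|\leq C r^{2+\alpha}$ for all small $r$.

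The final step upgrades this $L^\infty$-bound to the sharp form with an $x_n$ factor. Since $u-P$ vanishes on $\{x_n=0\}$ and satisfies the $x_n^\gamma$-weighted Pucci inequalities (inherited from the equation for $u-P$ together with uniform parabolicity of $F$), applying \Cref{result:ly24} to the rescaled function $(u-P)(rx,r^{2-\gamma}t)/\sup_{Q_r^+}|u-P|$ in $Q_1^+$ gives $|u-P|(x,t)\leq C(\sup_{Q_r^+}|u-P|)\cdot(x_n/r)$ on $Q_{r/2}^+$; combined with the iteration bound this yields $|u-P|(x,t)\leq C r^{1+\alpha}x_n$, and choosing $r\sim|x|$ produces the claimed estimate $|u-P|\leq C|x|^{1+\alpha}x_n$. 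The main obstacle I anticipate is the Liouville step inside the approximation lemma: showing that the admissible polynomial family $\{P=x_nL:G_\infty(D^2 P)=0\}$ is rich enough, and that the matching polynomial has coefficients bounded uniformly in $G_\infty$, requires delicate use of the concavity/convexity of $G_\infty$ and careful handling of the degenerate weight at the boundary.
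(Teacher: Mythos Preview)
This theorem is quoted from \cite{LY23} and is not proved in the present paper; it appears among the preliminary known results and is later used as a black box (for instance in the proof of \Cref{lem1:gen_c2a}). There is therefore no proof here against which to compare your attempt.

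Judged on its own, your outline has the correct architecture, and the normalization, iteration, and $x_n$-upgrade steps are fine. The gap is precisely where you flag it. After passing to the limit in the approximation lemma you obtain $v_\infty$ solving $\partial_t v_\infty = x_n^\gamma G_\infty(D^2 v_\infty)$ with $v_\infty=0$ on $\{x_n=0\}$, where $G_\infty$ is concave or convex, uniformly elliptic, and $G_\infty(O_n)=0$: this is an equation of \emph{exactly} the same type as the one you started with. To extract a quadratic $P(x)=x_n L(x)$ approximating $v_\infty$ you need a second-order boundary expansion for $v_\infty$, which is the very statement under proof; the extra information $G_\infty(O_n)=0$ does not by itself make this easier. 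Your appeal to ``a Liouville-type argument'' is a placeholder rather than an argument---a Liouville theorem classifies global solutions of controlled growth, whereas what is needed here is local second-order regularity at a boundary point. The circularity has to be broken by an independent mechanism specific to the model half-space problem: for example, exploiting that the limit equation is invariant under translations in $(x',t)$ so that tangential and time increments of $v_\infty$ lie in the degenerate Pucci class of \Cref{result:ly24}, which (together with interior Evans--Krylov bounds to control the increments uniformly) yields $C^{1,\overline\alpha}$ control of $D_{x'}v_\infty$ and $\partial_t v_\infty$ up to $\{x_n=0\}$, after which the equation itself recovers $D_{nn}v_\infty$. Until some device of this kind is supplied, the scheme does not close.
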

%==========================================================================
%
%							Comparison Principle
% 
%==========================================================================
\section{Comparison with concave solutions} \label{sec:comp}
In this section, we discuss a conditional comparison principle for \eqref{eq:model}. In general, the comparison principle for fully nonlinear equations can be obtained under appropriate structure conditions.

We say the fully nonlinear operator $F:\mathcal{S}^n\times \mathbb{R}^n \times \mathbb{R} \times \Omega_T \to \mathbb{R}$ is \textit{proper} provided that for each $(M,p,x,t) \in \mathcal{S}^n \times \mathbb{R}^n \times \Omega_T$,
\begin{equation*} 
	F(M,p,s,x,t) \leq F(M,p,r,x,t) \quad \text{whenever } r \leq s.
\end{equation*}
Theorem 8.2 in \cite{CIL92} provides a comparison principle for many kinds of proper $F$, but does not provide a comparison principle for \eqref{eq:model} because the fully nonlinear operator in \eqref{eq:model} is not proper. However, as a special case, a conditional comparison principle can be obtained if one of the two viscosity solutions is concave in $x$. To demonstrate a conditional comparison principle of \eqref{eq:model}, we introduce semijets and theorems suggested in \cite{CIL92}.
%==========================================================================
%							Definition: semijets 
%==========================================================================
\begin{definition}[Parabolic semijets]\label{semijet}
	 Let $u$ be a function defined in $\Omega_T$ and let $(x, t)\in \Omega_T$.
	\begin{enumerate}[label=(\roman*)]
		\item A \textit{parabolic superjet} $\mathscr{P}^{2, +}u(x, t)$ consists of $(a, p, M) \in \mathbb{R} \times \mathbb{R}^n \times \mathcal{S}^n$ which satisfy
		\begin{align*}
			u(y, s) &\leq u(x,t)+a(s-t)+p \cdot(y-x) \\
			&\qquad +\frac{1}{2} (y-x)^T M(y-x) +o(|s-t|+|z-x|^2) \quad \text{as $(y, s) \to (x, t)$}. 
		\end{align*}
		Similarly, we can define a \textit{parabolic subjet} $\mathscr{P}^{2, -}u(x, t)$. It immediately follows that
		\begin{align*}
			\mathscr{P}^{2, -}u(x, t)=-\mathscr{P}^{2, +}(-u)(x, t).
		\end{align*} 
	
	\item A \textit{limiting superjet} $\overline{\mathscr{P}}^{2, +}u(x, t)$ consists of $(a, p, M) \in \mathbb{R} \times \mathbb{R}^n \times \mathcal{S}^n$ with the following statement holds: there exists a sequence $\{(x_n, t_n, a_n, p_n, M_n)\}_{n=1}^{\infty}$ such that $(a_n, p_n, M_n) \in \mathscr{P}^{2, +}u(x_n, t_n)$ and 
	\begin{equation*}
		\text{$(x_n, t_n, u(x_n, t_n), a_n, p_n, M_n) \to (x, t, u(x, t), a, p, M)$} \quad \text{as } n \to \infty.
	\end{equation*}
	We define a \textit{parabolic subjet} $\overline{\mathscr{P}}^{2, -}u(x, t)$ in a similar way.
	\end{enumerate}	
\end{definition}
%==========================================================================
%					Theorem: Jensen Ishii's Lemma 
%==========================================================================
\begin{theorem}[Jensen--Ishii's Lemma,  {\cite[Theorem 8.3]{CIL92}}]\label{lem:ishii}
Let $U$ and $V$ be two open sets of $\mathbb{R}^n_+$ and $I$ an open interval of $\mathbb{R}$. Consider also a viscosity subsolution $u$ of \eqref{eq:model} in $U\times I$ and a viscosity supersolution $v$ of \eqref{eq:model} in $V\times I$. Suppose that 
\begin{equation} \label{ishii_fcn}
	w(x,y,t)\coloneqq u(x,t)-v(y,t) - \frac{1}{2\varepsilon} |x-y|^2
\end{equation} 
has a local maximum at $(x_\varepsilon,y_\varepsilon,t_\varepsilon) \in U\times V \times I$. Then there exists $a \in \mathbb{R}$ and $M,N \in \mathcal{S}^n$ such that $(a,p,M) \in \overline{\mathscr{P}}^{2, +}u(x_\varepsilon, t_\varepsilon)$, $(a,p,N) \in \overline{\mathscr{P}}^{2, -}v(y_\varepsilon, t_\varepsilon)$, and 
\begin{equation*}
	-\frac{2}{\varepsilon} 
		\begin{pmatrix}
			I_n & O_n  \\
			O_n & I_n
		\end{pmatrix} \leq 
		\begin{pmatrix}
			M & O_n  \\
			O_n & -N
		\end{pmatrix} \leq  \frac{3}{\varepsilon} 
		\begin{pmatrix}
			I_n & -I_n  \\
			-I_n & I_n
		\end{pmatrix},
\end{equation*}
where $p \coloneqq \varepsilon^{-1}(x_\varepsilon-y_\varepsilon)$, $I_{n}$ and $O_{n}$ are the indentity matrix and zero matrix, respectively.
\end{theorem}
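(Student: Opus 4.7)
The plan is to derive the statement from the spatial theorem on sums in \cite{CIL92} together with a Jensen-type regularization, which is the route taken in the proof of Theorem 8.3 of \cite{CIL92}. The penalty $\varphi(x,y)\coloneqq \frac{1}{2\varepsilon}|x-y|^2$ is smooth, independent of $t$, and satisfies
\[
D\varphi(x_\varepsilon,y_\varepsilon)=(p,-p), \qquad D^2\varphi(x_\varepsilon,y_\varepsilon)=\frac{1}{\varepsilon}\begin{pmatrix} I_n & -I_n \\ -I_n & I_n \end{pmatrix},
\]
with $p=\varepsilon^{-1}(x_\varepsilon-y_\varepsilon)$. Since $u(x,t)-v(y,t)-\varphi(x,y)$ attains a local maximum at $(x_\varepsilon,y_\varepsilon,t_\varepsilon)$, this is precisely the configuration to which the theorem of sums applies.

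The first step is to freeze the time $t=t_\varepsilon$ and apply the elliptic theorem on sums (Theorem 3.2 in \cite{CIL92}) to the pair $(u(\cdot,t_\varepsilon),-v(\cdot,t_\varepsilon))$ with penalty $\varphi$. This, after a Jensen-type perturbation to make the maximum strict and to gain second-order information (Lemma A.3 in \cite{CIL92}), yields matrices $M,N\in\mathcal{S}^n$ such that $(p,M)\in\overline{\mathcal{J}}^{2,+}u(\cdot,t_\varepsilon)(x_\varepsilon)$, $(p,N)\in\overline{\mathcal{J}}^{2,-}v(\cdot,t_\varepsilon)(y_\varepsilon)$, and
\[
-\frac{2}{\varepsilon}\begin{pmatrix} I_n & O_n \\ O_n & I_n \end{pmatrix}\le \begin{pmatrix} M & O_n \\ O_n & -N \end{pmatrix}\le \frac{3}{\varepsilon}\begin{pmatrix} I_n & -I_n \\ -I_n & I_n \end{pmatrix}.
\]
The upper bound comes directly from $D^2\varphi(x_\varepsilon,y_\varepsilon)$ with the standard cushion factor $3/2$, and the lower bound is the generic ellipticity constraint of the theorem on sums.

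The next step is to upgrade the spatial semijets to parabolic semijets with a common time component $a$. Because $\varphi$ is independent of $t$, for every $(x,y)$ near $(x_\varepsilon,y_\varepsilon)$ the map $t\mapsto u(x,t)-v(y,t)$ attains a local maximum at $t=t_\varepsilon$; morally $u_t(x_\varepsilon,t_\varepsilon)=v_t(y_\varepsilon,t_\varepsilon)$. To make this rigorous, I would use the time-slicing argument from Section 8 of \cite{CIL92}: pick approximating sequences of test functions whose spatial part realizes the semijets produced above, and use that $w$ is maximized in $t$ to extract, along a diagonal subsequence, a common limit $a\in\mathbb{R}$ so that $(a,p,M)\in\overline{\mathscr P}^{2,+}u(x_\varepsilon,t_\varepsilon)$ and $(a,p,N)\in\overline{\mathscr P}^{2,-}v(y_\varepsilon,t_\varepsilon)$.

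The main obstacle is the simultaneous passage from approximating jets to limiting jets while enforcing that the time derivatives coincide. One cannot simply apply the elliptic theorem on sums slice-by-slice and then take limits; compactness must be invoked carefully, together with the strictness of the maximum obtained via Jensen's lemma, to guarantee that the time component stabilizes to the same $a$ on both sides. This coupling of the time components is exactly the parabolic refinement supplied by Theorem 8.3 of \cite{CIL92}, and reproducing it in the present setting is the delicate part; the matrix inequality itself is, by contrast, a direct consequence of the spatial argument.
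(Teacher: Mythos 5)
This statement is cited in the paper as {\cite[Theorem 8.3]{CIL92}}; the paper provides no proof of its own, so there is no internal argument to compare against --- the result is a standard ingredient taken directly from the Crandall--Ishii--Lions user's guide, specialized to two functions and the quadratic penalty $\varphi(x,y)=\tfrac{1}{2\varepsilon}|x-y|^2$, which determines $p=\varepsilon^{-1}(x_\varepsilon-y_\varepsilon)$ and the block bounds via $A=D^2\varphi$, $A^2=\tfrac{2}{\varepsilon}A$.

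Your sketch is directionally consistent with the proof in \cite{CIL92}, but the central step as you describe it does not go through as stated, and you acknowledge as much without repairing it. Applying the elliptic theorem on sums to the time slice $t=t_\varepsilon$ produces elements of the \emph{elliptic} closed semijets $\overline{\mathcal{J}}^{2,\pm}$ of $u(\cdot,t_\varepsilon)$ and $v(\cdot,t_\varepsilon)$; these are genuinely weaker objects than elements of the parabolic closed semijets $\overline{\mathscr{P}}^{2,\pm}$ of $u$ and $v$ at $(x_\varepsilon,t_\varepsilon)$, and there is no general mechanism to ``upgrade'' the former to the latter, let alone with a matching time component $a$ on both sides. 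The actual proof of Theorem~8.3 in \cite{CIL92} does not slice in time: it performs a sup-convolution (resp.\ inf-convolution) of $u$ (resp.\ $v$) in the $t$-variable so that the regularized functions are Lipschitz in $t$, invokes the semiconvex/semiconcave structure and a magic-properties argument to produce a common time derivative, and only then extracts the matrix inequality and passes to the limit in the regularization parameter. This also requires the tacit hypothesis (condition~(8.5) of \cite{CIL92}) that the candidate time components of the superjets are bounded above near $(x_\varepsilon,t_\varepsilon)$, which in the present context is supplied by the fact that $u$, $v$ are sub/supersolutions of a parabolic equation; your proposal does not mention this hypothesis. Finally, the claim that ``the matrix inequality is a direct consequence of the spatial argument'' is too strong: the inequality is obtained only after the time regularization and the limit passage, not from a naive slice-wise application. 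In short, your outline identifies the correct high-level ingredients and honestly flags the obstruction, but the time-slicing route you propose is not the route of \cite{CIL92} and would not close; the coupling of the time components is established through time convolution, not through a diagonal subsequence of slice-wise elliptic jets.
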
 
%==========================================================================
%				Lemma:
%==========================================================================
\begin{lemma} [{\cite[Lemma 3.1]{CIL92}}, {\cite[Lemma 2.3.19]{BEG13}}] \label{lem:mu_ep}
	Let $u$ be a upper semicontinuous in $\overline{\Omega_T}$ and $v$ be a lower semicontinuous in $\overline{\Omega_T}$. Assume that 
	\begin{equation*}
		\mu_{\varepsilon} \coloneqq \sup_{(x,t),(y,s) \in \overline{\Omega_T}} \left(u(x,t)-v(y,s) -\frac{1}{2\varepsilon}|x-y|^2   -\frac{1}{2\varepsilon}|t-s|^2 \right) < \infty
	\end{equation*}
	for small $\varepsilon>0$ and $\{(x_\varepsilon, t_\varepsilon, y_\varepsilon,s_\varepsilon)\}$ satisfies that 
	\begin{equation*}
		\lim_{\varepsilon \to 0} \left( \mu_{\varepsilon} - u(x_\varepsilon,t_\varepsilon)+v(y_\varepsilon,s_\varepsilon) + \frac{1}{2\varepsilon}|x_\varepsilon-y_\varepsilon|^2 + \frac{1}{2\varepsilon}|t_\varepsilon-s_\varepsilon|^2\right)=0.
	\end{equation*}
	Then the following statements hold:
	\begin{enumerate}[label=(\roman*)]
	\item $\displaystyle\lim_{\varepsilon \to 0} {\varepsilon}^{-1} |x_\varepsilon - y_\varepsilon|^2 =0$ and $\displaystyle\lim_{\varepsilon \to 0} {\varepsilon}^{-1} |t_\varepsilon - s_\varepsilon|^2 =0$,
	\item $\displaystyle\lim_{\varepsilon \to 0} \mu_{\varepsilon} = u(\overline{x},\overline{t})-v(\overline{x},\overline{t}) = \sup_{\Omega_T}(u-v)$ whenever $(\overline{x},\overline{t}) \in \overline{\Omega_T}$ is a limit point of $\{(x_\varepsilon, t_\varepsilon)\}$ as $\varepsilon \to 0$.
	\end{enumerate}
\end{lemma}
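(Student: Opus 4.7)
The plan is to leverage the compactness of $\overline{\Omega_T}$ together with the semicontinuity of $u$ and $v$. First set $M := \sup_{\overline{\Omega_T}}(u-v)$; since $u-v$ is upper semicontinuous on the compact set $\overline{\Omega_T}$, $M$ is finite and attained. Taking $x=y$ and $t=s$ in the supremum defining $\mu_\varepsilon$ gives $\mu_\varepsilon \geq M$ for every $\varepsilon>0$, and since $u$ is bounded above and $v$ bounded below we also have $\mu_\varepsilon \leq \sup u - \inf v < \infty$ uniformly. For $\varepsilon_1<\varepsilon_2$ the stronger penalty yields $\mu_{\varepsilon_1}\leq\mu_{\varepsilon_2}$, so $\varepsilon\mapsto\mu_\varepsilon$ is nondecreasing, and the monotone limit $a := \lim_{\varepsilon \to 0^+}\mu_\varepsilon$ exists with $M \leq a$.

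Next, I would rearrange the approximate-maximizer hypothesis into the identity
\begin{equation*}
(2\varepsilon)^{-1}\bigl(|x_\varepsilon - y_\varepsilon|^2 + |t_\varepsilon - s_\varepsilon|^2\bigr) = u(x_\varepsilon, t_\varepsilon) - v(y_\varepsilon, s_\varepsilon) - \mu_\varepsilon + o(1).
\end{equation*}
The right-hand side is uniformly bounded in $\varepsilon$, so the left-hand side is too; since $\varepsilon^{-1}\to\infty$, this forces $|x_\varepsilon - y_\varepsilon| + |t_\varepsilon - s_\varepsilon|\to 0$. Now fix any limit point $(\bar x, \bar t)$ of $\{(x_\varepsilon,t_\varepsilon)\}$ and extract a subsequence realizing it; the previous line gives $(y_\varepsilon, s_\varepsilon)\to(\bar x, \bar t)$ along the same subsequence. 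Passing to $\limsup$ in the displayed identity and invoking upper semicontinuity of $u$ and lower semicontinuity of $v$ yields
\begin{equation*}
\limsup_{\varepsilon\to 0^+}(2\varepsilon)^{-1}\bigl(|x_\varepsilon - y_\varepsilon|^2 + |t_\varepsilon - s_\varepsilon|^2\bigr) \leq u(\bar x, \bar t) - v(\bar x, \bar t) - a \leq M - a \leq 0.
\end{equation*}
Since the penalty is nonnegative, every inequality collapses to an equality. This proves $a = M = u(\bar x, \bar t) - v(\bar x, \bar t)$, establishing claim (ii), and simultaneously drives the penalty to zero along the chosen subsequence.

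The step I expect to require the most care is the passage from subsequential convergence to the full limit required in (i). The remedy is to observe that the argument above applies to any sequence $\{\varepsilon_n\}\to 0^+$: compactness of $\overline{\Omega_T}\times\overline{\Omega_T}$ lets us extract a further subsequence along which $(x_\varepsilon,t_\varepsilon,y_\varepsilon,s_\varepsilon)$ converges, and then the collapse of inequalities above drives the penalty to zero along that subsubsequence. Hence every subsequence of the penalty admits a further subsequence tending to zero, giving $\lim_{\varepsilon\to 0^+}\varepsilon^{-1}|x_\varepsilon-y_\varepsilon|^2 = \lim_{\varepsilon\to 0^+}\varepsilon^{-1}|t_\varepsilon-s_\varepsilon|^2 = 0$. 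Apart from this subsequence-to-sequence bookkeeping, the proof is a routine combination of compactness and semicontinuity.
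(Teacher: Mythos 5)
The paper does not provide a proof of this lemma; it is cited from \cite{CIL92} and \cite{BEG13}, so there is no ``paper proof'' to compare against. Your argument is correct: the monotonicity of $\varepsilon\mapsto\mu_\varepsilon$, the bound $\mu_\varepsilon\geq M$, the rearranged identity for the penalty, boundedness of the right-hand side forcing $|x_\varepsilon-y_\varepsilon|+|t_\varepsilon-s_\varepsilon|\to 0$, the collapse of the chain $\limsup\,(\text{penalty})\leq u(\bar x,\bar t)-v(\bar x,\bar t)-a\leq M-a\leq 0$ via semicontinuity, and the subsequence-of-subsequence bookkeeping for the full limit in (i) are all sound. The one thing worth noting is a stylistic difference from the classical argument in \cite[Lemma 3.1]{CIL92}: there, (i) is obtained directly, without invoking compactness, by comparing $\mu_\varepsilon$ with $\mu_{2\varepsilon}$. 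Since $(x_\varepsilon,t_\varepsilon,y_\varepsilon,s_\varepsilon)$ is an $o(1)$-maximizer for $\mu_\varepsilon$, evaluating the $\mu_{2\varepsilon}$-functional at the same point gives
\begin{equation*}
\frac{1}{4\varepsilon}\bigl(|x_\varepsilon-y_\varepsilon|^2+|t_\varepsilon-s_\varepsilon|^2\bigr)\;\leq\;\mu_{2\varepsilon}-\mu_\varepsilon+o(1)\;\longrightarrow\;0,
\end{equation*}
using only that $\mu_\varepsilon$ is monotone and bounded, hence converges. This single-line estimate both proves (i) and supplies the vanishing of the penalty needed in (ii), making compactness unnecessary for that part. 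Your route — extract a convergent subsequence, push the penalty to zero via semicontinuity, then pass from subsequences to the full limit — uses compactness of $\overline{\Omega_T}$, which is available here since $\Omega$ is a bounded domain and $T$ is finite, so both approaches are valid in this setting; the \cite{CIL92} trick is simply more economical and generalizes to non-compact domains.
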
 
The fully nonlinear operator in \eqref{eq:model} satisfies that for each $(M,x,t) \in \mathcal{S}^n \times \Omega_T$ with $M\leq 0$,
\begin{equation*}
	s^\gamma F(M,x,t) \leq r^\gamma F(M,x,t) \quad \text{whenever } r \leq s.
\end{equation*}
and we obtain the following conditional comparison principle.
%==========================================================================
%						Theorem: Comparison Principle 1
%==========================================================================
\begin{lemma}[Comparison with concave supersolution]\label{com_prin}
Suppose that $F$ satisfies \textnormal{\ref{F1}}.
Let $u \in C(\overline{\Omega_T})$ and $v\in C(\overline{\Omega_T}) $ be a viscosity subsolution and viscosity supersolution of \eqref{eq:model} in $\Omega_T$, respectively. If $u \leq v$ on $\partial_p \Omega_T$ and  $F(D^2v,x,t) < 0$ in the viscosity sense in $\Omega_T$, then $u \leq v$ in $\Omega_T$.
\end{lemma}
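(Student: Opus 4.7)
My plan is the classical variable-doubling argument via Jensen-Ishii's lemma (\Cref{lem:ishii}), adapted to the degenerate structure of \eqref{eq:model}. Suppose for contradiction that $M_0 := \sup_{\overline{\Omega_T}}(u - v) > 0$. Since $u \leq v$ on $\partial_p \Omega_T$ and both functions are continuous, the supremum is attained strictly inside $\Omega_T$. To overcome the lack of properness of the equation and to manufacture a strict subsolution with a quantitative reserve, I first subtract a time barrier: define
$$u_\eta(x,t) := u(x,t) - \frac{\eta}{T_0 - t}$$
for a fixed $T_0 > 0$ and a small $\eta > 0$ chosen so that $\sup(u_\eta - v) > 0$ still holds. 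A direct check on test functions shows that $u_\eta$ inherits the subsolution property in the viscosity sense, with an extra $\eta/(T_0 - t)^2$ added to the left-hand side of the inequality.

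Next I double only in the spatial variable by introducing
$$\Phi(x,y,t) := u_\eta(x,t) - v(y,t) - \frac{|x-y|^2}{2\varepsilon}$$
on $\overline{\Omega}\times\overline{\Omega}\times[-T,0]$, and denote its maximum by $(x_\varepsilon, y_\varepsilon, t_\varepsilon)$. The standard estimate behind \Cref{lem:mu_ep} gives $|x_\varepsilon - y_\varepsilon|^2/\varepsilon \to 0$ as $\varepsilon \to 0$, so every limit point $(\bar x, \bar t)$ lies strictly in the interior with $u(\bar x, \bar t) > v(\bar x, \bar t) \geq 0$. Applying \Cref{lem:ishii} produces $a_\varepsilon \in \mathbb{R}$, $p_\varepsilon = \varepsilon^{-1}(x_\varepsilon - y_\varepsilon)$, and matrices $X_\varepsilon \leq Y_\varepsilon$ with $(a_\varepsilon, p_\varepsilon, X_\varepsilon) \in \overline{\mathscr{P}}^{2,+} u(x_\varepsilon, t_\varepsilon)$ and $(a_\varepsilon, p_\varepsilon, Y_\varepsilon) \in \overline{\mathscr{P}}^{2,-} v(y_\varepsilon, t_\varepsilon)$. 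Since $u, v$ are strictly positive at these points for small $\varepsilon$, the sub/supersolution inequalities apply and subtraction yields
$$\frac{\eta}{(T_0 - t_\varepsilon)^2} \leq u(x_\varepsilon, t_\varepsilon)^\gamma F(X_\varepsilon, x_\varepsilon, t_\varepsilon) - v(y_\varepsilon, t_\varepsilon)^\gamma F(Y_\varepsilon, y_\varepsilon, t_\varepsilon).$$

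I then decompose the right-hand side into three pieces: an ellipticity piece $u^\gamma[F(X_\varepsilon, x_\varepsilon, t_\varepsilon) - F(Y_\varepsilon, x_\varepsilon, t_\varepsilon)]$, which is non-positive by \ref{F1} together with $X_\varepsilon \leq Y_\varepsilon$; a continuity piece $u^\gamma[F(Y_\varepsilon, x_\varepsilon, t_\varepsilon) - F(Y_\varepsilon, y_\varepsilon, t_\varepsilon)]$, which goes to zero by continuity of $F$ in $(x,t)$; and a ``strict concavity'' piece $(u^\gamma - v^\gamma) F(Y_\varepsilon, y_\varepsilon, t_\varepsilon)$. For the last, the hypothesis $F(D^2 v, \cdot, \cdot) < 0$ applied to the subjet $Y_\varepsilon$ of $v$ gives $F(Y_\varepsilon, y_\varepsilon, t_\varepsilon) < 0$; combined with $u(\bar x, \bar t)^\gamma > v(\bar x, \bar t)^\gamma \geq 0$ in the limit, this piece is bounded above by a strictly negative quantity. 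Letting $\varepsilon \to 0$ produces $\eta/(T_0 - \bar t)^2 \leq 0$, contradicting $\eta > 0$.

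The main obstacle is ensuring the continuity piece really vanishes: Jensen-Ishii's matrix inequality only supplies the lower bound $Y_\varepsilon \geq -C\varepsilon^{-1} I$, with no a priori upper bound, so in principle $Y_\varepsilon$ could blow up along the sequence and spoil the $(x,t)$-continuity estimate for $F$. This is precisely where the concavity of $v$ in $x$ (the content behind the lemma's title) is decisive; concavity forces every element of $\overline{\mathscr{P}}^{2,-} v$ to satisfy $Y_\varepsilon \leq 0$, which together with the Jensen-Ishii lower bound keeps $Y_\varepsilon$ in a fixed compact set of $\mathcal{S}^n$, on which continuity of $F$ in $(x,t)$ is uniform.
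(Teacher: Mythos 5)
Your overall architecture (variable doubling, Jensen--Ishii, a time barrier to manufacture a strict reserve, and a three-piece decomposition of the resulting inequality) is a reasonable and genuinely different route from the one taken in the paper. The paper perturbs the \emph{supersolution} rather than the subsolution: it replaces $v$ by $\tilde v = v + \varphi$ with $\varphi(x,t) = -\delta/t + \delta(\|p\|_\infty - p(x))$ for a convex quadratic $p$, shows that $\tilde v$ is a strict supersolution of the perturbed equation $w_t = w^\gamma F(D^2 w + \delta A,x,t) + \delta/T^2$ and blows up as $t\to 0^-$, notes by monotonicity that $u$ is also a subsolution of this perturbed equation, and then after Jensen--Ishii chooses the free matrix $A$ (depending on the sign of $F(M,\bar x,\bar t)$) to force a contradiction. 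Your version, by decomposing $u^\gamma F(X) - v^\gamma F(Y)$ into an ellipticity piece, an $(x,t)$-continuity piece, and the sign piece $(u^\gamma - v^\gamma)F(Y)$, is cleaner to state and correctly isolates where the hypothesis $F(D^2 v,\cdot,\cdot) < 0$ enters; one small imprecision is that from the limiting subjet you only get $F(Y_\varepsilon,y_\varepsilon,t_\varepsilon) \le 0$ (not $<0$), which is fine here because the strict margin is supplied by the time barrier.

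The genuine gap is in your proposed fix for the continuity piece. You invoke ``the concavity of $v$ in $x$'' to conclude $Y_\varepsilon \le 0$, but concavity of $v$ is \emph{not} a hypothesis of the lemma --- only the viscosity inequality $F(D^2 v,x,t) < 0$ is assumed, and the lemma's title should not be read as an extra assumption. Indeed, where the paper applies this lemma (in the proof of the Lipschitz bound in \Cref{lem:u<x}), the supersolution is $\Phi^+(x,t) = (1-\gamma)^{-1/\gamma}(t+T+\tau_+)^{-1/\gamma}\phi^+(x)$ with $\phi^+$ the solution of the nonlinear eigenvalue-type problem from \Cref{result:kl13}, and there is no concavity available for $\phi^+$; only the sign condition $\mathcal{M}^+_{\lambda,\Lambda}(D^2\phi^+) < 0$ is known. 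The hypothesis $F(Y_\varepsilon,\cdot) \le 0$ does not impose an upper bound on $Y_\varepsilon$ in $\mathcal{S}^n$: via \textnormal{(F1)} it only gives $\mathcal{M}^-_{\lambda,\Lambda}(Y_\varepsilon) \le 0$, which constrains a weighted sum of eigenvalues but allows $\|Y_\varepsilon\|$ to be arbitrarily large. Moreover, even if concavity \emph{were} assumed, combining $Y_\varepsilon \le 0$ with the Jensen--Ishii lower bound $Y_\varepsilon \ge -C\varepsilon^{-1}I$ yields a compact set that \emph{depends on} $\varepsilon$, so it still would not deliver the uniform modulus of continuity you need as $\varepsilon \to 0$. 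Thus the step you yourself flag as the ``main obstacle'' remains unresolved, and the argument you offer to close it does not apply under the stated hypotheses. To repair this you would need either a structural $(x,t)$-modulus for $F$ of Crandall--Ishii--Lions type (uniform on the scale $\|M\|\lesssim 1/\varepsilon$), or to sidestep the continuity piece altogether by following the paper's device of absorbing the Hessians into a judiciously chosen perturbation matrix $A$.
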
 

\begin{proof}
	We first observe that for a matrix $A\ge 0$ and $\delta>0$, the function $\tilde{v} \coloneqq v + \varphi$ satisfies
\begin{align*}
	\tilde{v}_t 
	&\ge (\tilde{v}-\varphi)^{\gamma} F(D^2 \tilde{v} + \delta A,x,t) + \delta/T^2 \\
	&\ge \tilde{v}^{\gamma} F(D^2 \tilde{v} + \delta A,x,t) + \delta/T^2 \quad \text{in the viscosity sense in } \Omega_T 
\end{align*}
and $\tilde{v} \to \infty$ uniformly on $\overline{\Omega}$ as $t \to 0^-$, where
\begin{equation*}
	\varphi(x,t) \coloneqq  - \frac{\delta}{t} + \delta(\|p\|_{L^{\infty}(\Omega)}  - p(x)) \quad \text{and} \quad p(x) \coloneqq c + p \cdot x +\frac{1}{2} x^T A x.
\end{equation*}
Note also that $u$ satisfies 
\begin{align*}
	u_t &\leq u^{\gamma} F(D^2u,x,t) \\
	& \leq u^{\gamma} F(D^2u+ \delta A,x,t) + \delta/T^2 \quad \text{in the viscosity sense in } \Omega_T .
\end{align*}

	We argue by contradiction. Suppose that $m \coloneqq u(x_0,t_0)-\tilde{v}(x_0,t_0) = \sup_{\Omega_T}(u-\tilde{v}) >0$ for some $(x_0,t_0) \in \Omega_T$. Since $\tilde{v}$ is bounded below, $w$ defined as in \eqref{ishii_fcn} has the maximum $\mu_\varepsilon$  at $(x_\varepsilon,y_\varepsilon,t_\varepsilon)$. Furthermore, $\{(x_\varepsilon,y_\varepsilon, t_\varepsilon)\}$ is a bounded sequence, there exists a subsequence $\{(x_{\varepsilon_j},y_{\varepsilon_j},t_{\varepsilon_j})\}$ such that 
\begin{equation*}
	\lim_{\varepsilon_j \to 0^+} x_{\varepsilon_j} = \overline{x} , \quad 
	\lim_{\varepsilon_j \to 0^+} y_{\varepsilon_j} = \overline{y} ,\quad \text{and} \quad
	\lim_{\varepsilon_j \to 0^+} t_{\varepsilon_j} = \overline{t}.
\end{equation*}
For convenience, subindex $j$ will be omitted from now on. 
If $t_\varepsilon = -T$, by \Cref{lem:mu_ep}, we have $\overline{x} =\overline{y}$ and 
	\begin{align*}
		0<m = \lim_{\varepsilon \to 0^+} \mu_{\varepsilon} &\leq \lim_{\varepsilon \to 0^+}\sup_{x,y \in \Omega} \left(u(x,-T)- \tilde{v}(y,-T) -\frac{1}{2\varepsilon}|x-y|^2  \right) \\
		&=u(\overline{x},-T) -\tilde{v}(\overline{x},-T) <0
	\end{align*}
	which is not possible. Similarly, we can see that $x_\varepsilon, y_\varepsilon \in \Omega$ if $\varepsilon>0$ is sufficiently small. Thus we can apply \Cref{lem:ishii} to the point $(x_\varepsilon,y_\varepsilon,t_\varepsilon)$, there exists $a \in \mathbb{R}$ and $M,N \in \mathcal{S}^n$ such that $(a,p,M) \in \overline{\mathscr{P}}^{2, +}u(x_\varepsilon, t_\varepsilon)$, $(a,p,N) \in \overline{\mathscr{P}}^{2, -} \tilde{v}(y_\varepsilon, t_\varepsilon)$, and 
$M \leq N$, where $p \coloneqq \varepsilon^{-1}(x_\varepsilon-y_\varepsilon)$. These imply that

\begin{equation*}
	a \leq u(x_\varepsilon, t_\varepsilon)^{\gamma} F(M,x_\varepsilon, t_\varepsilon) \quad \text{and} \quad
	a \ge \tilde{v}(y_\varepsilon, t_\varepsilon)^{\gamma} F(N+\delta A,y_\varepsilon, t_\varepsilon) + \delta/T^2.
\end{equation*}
Since $\lim_{\varepsilon \to 0}\varepsilon^{-1} |x_\varepsilon - y_\varepsilon|^2 = 0$ and $M \leq N$, we have
\begin{align*}
	\delta/T^2 & \leq u(x_\varepsilon, t_\varepsilon)^{\gamma} F(M,x_\varepsilon, t_\varepsilon) - \tilde{v}(y_\varepsilon, t_\varepsilon)^{\gamma} F(N+\delta A,y_\varepsilon, t_\varepsilon) .
\end{align*}
Letting $\varepsilon \to 0^+$, we deduce
\begin{equation} \label{after_limit}
	0<\delta/T^2 \leq u(\overline{x} , \overline{t} )^{\gamma} F(M,\overline{x} , \overline{t})  - \tilde{v}(\overline{x} , \overline{t})^{\gamma} F(N+\delta A,\overline{x} , \overline{t}) .
\end{equation}
If $F(M,\overline{x} , \overline{t}) \leq 0$, then taking $A=\delta^{-1} (N-M)$ contradicts \eqref{after_limit}. 
On the other hand, if $F(M,\overline{x} , \overline{t}) > 0$, then taking $A=\delta^{-1} (\mu-1) N$ for $\mu> \max\{1,  \lambda^{-1}\tilde{v}(\overline{x} , \overline{t})^{-\gamma}(1+\Lambda u(\overline{x} , \overline{t})^{\gamma} )\}$, we have
\begin{align*}
	0<\delta/T^2 &\leq u(\overline{x} , \overline{t} )^{\gamma} F(N,\overline{x} , \overline{t})  - \tilde{v}(\overline{x} , \overline{t})^{\gamma} F(\mu N,\overline{x} , \overline{t}) \\
	&\leq (\Lambda u(\overline{x} , \overline{t} )^{\gamma} - \lambda \mu \tilde{v}(\overline{x} , \overline{t})^{\gamma} ) \|N\| \leq - \|N\|
\end{align*} 
which leads to a contradiction. Finally, by letting $\delta \to 0^+$, we obtain the desired conclusion.
\end{proof}
%==========================================================================
%						Remark: Comparison Principle 2
%==========================================================================
\begin{remark} [Comparison with concave subsolution] \label{com_prin2}
Although we have proven that the right-hand side of \eqref{after_limit} is negative by taking a certain $A$, in fact the same conclusion can be obtained if $\|A\|$ is sufficiently large. 

We now discuss the condition for the comparison with concave subsolutions. Let $u \in C^2(\overline{\Omega_T})$ be a viscosity subsolution of \eqref{eq:model} in $\Omega_T$ such that $F(D^2u,x,t) < 0$ in $\Omega_T$. Then for $A$ with sufficiently large $\|A\|$, we have $F(D^2u+ \delta A,x,t) > 0$ in $\Omega_T$ and hence
\begin{align*}
	u_t &\leq u^{\gamma} F(D^2u,x,t)  \\
	& \leq (u^{\gamma} - \varphi^\gamma) F(D^2u,x,t)  + \delta/T^2 \\
	& \leq |u - \varphi|^\gamma F(D^2u+ \delta A,x,t)  + \delta/T^2 \quad \text{in } \Omega_T. 
\end{align*}
% Note
%\begin{equation*}
%	|a^\gamma -b^\gamma| \leq |a-b|^\gamma \quad \text{for } a,b \ge 0.
%\end{equation*}
In other words, the concavity assumption of the viscosity supersolution in \Cref{com_prin} can be replaced by the concavity of the $C^2$-subsolution to 
\begin{equation} \label{eq:absol}
	u_t = |u - \varphi|^\gamma F(D^2u+ \delta A,x,t)+ \delta/T^2 \quad \text{in } \Omega_T,
\end{equation}
together with the fact that $v$ is a viscosity supersolution to \eqref{eq:absol}. 
\end{remark}
%==========================================================================
%
%						Boundary C^{1,a}-estimates
% 
%==========================================================================
\section{Boundary $C^{1,\alpha}$-estimates} \label{sec:c1a}
In this section, we prove the boundary $C^{1,\alpha}$-estimates for functions that satisfy \eqref{eq:s-class}. The inequalities in \eqref{eq:s-class} has degeneracy  $u^\gamma$ near $\partial_s \Omega_T$, and to obtain the boundary $C^{1,\alpha}$-estimates, Lipschitz estimates for $u$ near $\partial_s \Omega_T$ should be studied first. The following lemma shows that Lipschitz estimates for initial data $u_0$ near $\partial \Omega$ are persistently preserved for all time afterward.

Throughout this paper, we assume that $u_0$ is continuous and satisfies \eqref{con_u0}. 

%==========================================================================
%					Lemma: Boundary Lipschitz Estimates
%==========================================================================
\begin{lemma} \label{lem:u<x}
Let $u \in C(\overline{\Omega_T})$ be a nonnegative function that satisfies \eqref{eq:s-class}. Then 
\begin{equation*}
	C_1 \,\textnormal{dist}(x,\partial \Omega) \leq u(x,t) \leq C_2 \,\textnormal{dist}(x,\partial \Omega) \quad \text{for all } (x,t) \in \overline{\Omega_T},
\end{equation*}
where $C_1>0$ and $C_2>0$ are constants depending only on $n$, $\lambda$, $\Lambda$, $\gamma$, $c_1$, $c_2$, and $T$.
\end{lemma}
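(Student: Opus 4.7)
The plan is to bracket $u$ between two separable barriers of the form $g(t)\phi^\pm(x)$, where $\phi^\pm$ are the viscosity solutions produced by \Cref{result:kl13} applied with $F=\mathcal{M}^\pm_{\lambda,\Lambda}$ (both positively $1$-homogeneous), and $g$ is an explicit time factor chosen so that each product is an exact viscosity solution of the associated Pucci--porous-medium equation.

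For the upper bound, take $\phi^+$ satisfying $\mathcal{M}^+(D^2\phi^+)+\gamma^{-1}(1-\gamma)(\phi^+)^{1-\gamma}=0$ in $\Omega$ with $\phi^+=0$ on $\partial\Omega$ and $\phi^+\asymp\textnormal{dist}(\cdot,\partial\Omega)$, and set $V(x,t)=g(t)\phi^+(x)$ with $g(t)=((1-\gamma)(t+T)+b)^{-1/\gamma}$. A direct calculation shows that $V_t=V^\gamma\mathcal{M}^+(D^2V)$ in the viscosity sense and $\mathcal{M}^+(D^2V)<0$ in the viscosity sense. Choosing $b>0$ small enough that $V(\cdot,-T)\geq c_2\,\textnormal{dist}(\cdot,\partial\Omega)\geq u_0$, and noting $V=0=u$ on $\partial_s\Omega_T$, we have $V\geq u$ on $\partial_p\Omega_T$. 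Since $u$ is by \eqref{eq:s-class} a viscosity subsolution of $v_t=v^\gamma\mathcal{M}^+(D^2v)$, the comparison principle \Cref{com_prin} with $F=\mathcal{M}^+$ gives $u\leq V$ in $\Omega_T$, whence $u(x,t)\leq g(-T)\phi^+(x)\leq C_2\,\textnormal{dist}(x,\partial\Omega)$.

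The lower bound mirrors this construction using $\phi^-$ (from \Cref{result:kl13} applied with $F=\mathcal{M}^-$), $W(x,t)=h(t)\phi^-(x)$, and $h(t)=((1-\gamma)(t+T)+b')^{-1/\gamma}$ with $b'>0$ large enough that $W(\cdot,-T)\leq c_1\,\textnormal{dist}(\cdot,\partial\Omega)\leq u_0$. The main obstacle lies in the final comparison step: a direct application of \Cref{com_prin} with $F=\mathcal{M}^-$ would require $\mathcal{M}^-(D^2 u)<0$ in the viscosity sense on the \emph{supersolution} $u$, which we cannot assert. Instead I plan to invoke the alternative formulation \Cref{com_prin2}, which permits the strict Pucci inequality to sit on the subsolution at the cost of $C^2$ regularity. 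Since $\phi^-$ is only $C^{1,\alpha}(\Omega)$, this forces a regularization step: approximate $\phi^-$ by sup-convolutions $\phi^-_\varepsilon\in C^{1,1}_{\mathrm{loc}}$ (which inherit the viscosity inequality on exhausting subdomains), smooth further by mollification while keeping $\mathcal{M}^-(D^2\phi^-_\varepsilon)<0$ up to a vanishing error, apply \Cref{com_prin2} to the corresponding $W_\varepsilon$, and send $\varepsilon\to 0$. This yields $u\geq h(0)\phi^-\geq C_1\,\textnormal{dist}(\cdot,\partial\Omega)$, with $C_1$ depending on $T$ through $h(0)$, as required.

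The principal technical obstacle is thus the asymmetry between \Cref{com_prin} and \Cref{com_prin2}: the natural separable barriers carry only the $C^{1,\alpha}$-regularity furnished by \Cref{result:kl13}, whereas \Cref{com_prin2} demands $C^2$. A cleaner workaround, if available, would be to upgrade $\phi^-$ to $C^{2,\alpha}_{\mathrm{loc}}(\Omega)$ via interior Evans--Krylov estimates for the concave operator $\mathcal{M}^-$ with H\"older right-hand side $(\phi^-)^{1-\gamma}$, eliminating the regularization entirely and leaving a short comparison argument.
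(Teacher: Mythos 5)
Your proposal is correct and is essentially the paper's own argument: the same separable barriers $g(t)\phi^\pm(x)$ built from \Cref{result:kl13}, the same power-law time factor, and the same division of labor between \Cref{com_prin} (for the upper bound) and \Cref{com_prin2} (for the lower bound). The one place where you diverge from the paper is the regularity issue with $\phi^-$ that you correctly flag. The sup-convolution--plus--mollification plan you sketch as your primary route is both unnecessary and delicate (mollification need not preserve the strict viscosity inequality $\mathcal{M}^-(D^2\phi^-_\varepsilon)<0$ without additional care, and you would also need to control the boundary values of the regularized barriers). The ``cleaner workaround'' you propose at the end is precisely what the paper does: since $\mathcal{M}^\pm_{\lambda,\Lambda}$ are convex/concave, Evans--Krylov upgrades the interior regularity of $\phi^\pm$ from $C^{1,\alpha}(\Omega)$ (as stated in \Cref{result:kl13}) to $C^{2,\alpha}(\Omega)$, which is exactly the regularity the paper asserts for $\phi^\pm$ in its proof before applying \Cref{com_prin2}. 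You could tighten your write-up by making the Evans--Krylov route the primary argument and dropping the regularization entirely.
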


\begin{proof}
By \Cref{result:kl13}, there exist solutions $\phi^{\pm}  \in C^{0,1} (\overline{\Omega})\cap C^{2,\alpha}(\Omega)$ of 
\begin{equation*} 
	\left\{\begin{aligned}
		\mathcal{M}^{\pm}_{\lambda,\Lambda}(D^2 \phi) + \gamma^{-1}(1-\gamma) \phi^{1-\gamma}  &= 0 && \text{in } \Omega\\
		\phi &= 0 && \text{on } \partial \Omega,
	\end{aligned} \right.
\end{equation*}
respectively such that 
\begin{equation*}
	C_1 \,\textnormal{dist}(x,\partial \Omega) \leq \phi^{\pm}(x) \leq C_2 \,\textnormal{dist}(x,\partial \Omega) \quad \text{for all } x \in \overline{\Omega}.
\end{equation*}
From \eqref{con_u0}, we can choose $\tau_->0$ and $\tau_+>0$ such that 
\begin{equation} \label{comp_init}
	(1-\gamma)^{-1/\gamma}\tau_-^{-1/\gamma}\phi^-(x) \leq u_0(x) \leq (1-\gamma)^{-1/\gamma} \tau_+^{-1/\gamma}\phi^+(x) \quad \text{for all } x \in \overline{\Omega}.
\end{equation}
Moreover, we can see that $\Phi^{\pm}(x,t) \coloneqq (1-\gamma)^{-1/\gamma}  (t + T+ \tau_{\pm})^{-1/\gamma}\phi^{\pm}(x)$ are solutions of 
%==========================================================================
% Calculation 
%\begin{align*}
%	\Phi_t^{\pm} &= -\frac{(1-\gamma)^{-1/\gamma}}{\gamma}(t + T+ \tau_{\pm})^{-1/\gamma-1}\phi^{\pm}= -\frac{1}{\gamma}(t + T+ \tau_{\pm})^{-1} \Phi^{\pm} \\
%	(\Phi^{\pm})^\gamma &= (1-\gamma)^{-1}(t + T+ \tau_{\pm})^{-1} (\phi^{\pm})^\gamma \\
%	D^2 \Phi^{\pm} & = (1-\gamma)^{-1/\gamma}(t + T+ \tau_{\pm})^{-1/\gamma} D^2 \phi^{\pm} \\
%	(\phi^{\pm})^{1-\gamma}& = (1-\gamma)^{\frac{1-\gamma}{\gamma}} (t + T+ \tau_{\pm})^{\frac{1-\gamma}{\gamma}} (\Phi^{\pm} )^{1-\gamma}\\
%	 D^2 \phi^{\pm} &=(1-\gamma)^{1/\gamma}(t + T+ \tau_{\pm})^{1/\gamma} D^2 \Phi^{\pm} 
%\end{align*}
%\begin{align*}
%	&(1-\gamma)^{1/\gamma}(t + T+ \tau_{\pm})^{1/\gamma} \mathcal{M}^{\pm}_{\lambda,\Lambda}(D^2 \Phi^{\pm}) + \frac{1-\gamma}{\gamma} (1-\gamma)^{\frac{1-\gamma}{\gamma}} (t + T+ \tau_{\pm})^{\frac{1-\gamma}{\gamma}} (\Phi^{\pm} )^{1-\gamma} = 0 \\
%	&\mathcal{M}^{\pm}_{\lambda,\Lambda}(D^2 \Phi^{\pm}) =- \frac{1}{\gamma}(t + T+ \tau_{\pm})^{-1} (\Phi^{\pm} )^{1-\gamma}  \\
%	&(\Phi^{\pm})^\gamma \mathcal{M}^{\pm}_{\lambda,\Lambda}(D^2 \Phi^{\pm}) = - \frac{1}{\gamma} (t + T+ \tau_{\pm})^{-1} \Phi^{\pm}  = \Phi_t^{\pm} \\
%\end{align*}
%==========================================================================
\begin{equation*}
	\Phi_t = \Phi^\gamma \mathcal{M}^{\pm}_{\lambda,\Lambda}(D^2 \Phi) \quad \text{in } \Omega_T,
\end{equation*}
respectively, and \eqref{comp_init} yields 
\begin{equation*}
	\Phi^- \leq u \leq \Phi^+ \quad \text{on } \partial_p \Omega_T.
\end{equation*}
Since $\Phi_t^{\pm} < 0$ in $\Omega_T$, by \Cref{com_prin} and \Cref{com_prin2}, we have
\begin{equation*}
	\Phi^- \leq u \leq \Phi^+ \quad \text{on } \overline{\Omega_T},
\end{equation*}
which is the desired conclusion.
\end{proof}
%==========================================================================
%						Remark: Positivity of solutions 
%==========================================================================
\begin{remark} \label{rmk:positivity}
	By \Cref{lem:u<x}, any nonnegative solution $u \in C(\overline{\Omega_T})$ of \eqref{prob:main} satisfies $u>0$ in $\Omega_T$. This implies that the equation in \eqref{prob:main} is uniformly parabolic in $K_T$ for any $K \subset \joinrel \subset \Omega$.
\end{remark}
\Cref{lem:u<x} provides the following intuition: for a function $u$ that safisfies \eqref{eq:s-class}, we have
\begin{equation*}
	u(x,t) \approx \textnormal{dist}(x,\partial \Omega) \quad \text{near } \partial_s \Omega_T,
\end{equation*}
so roughly speaking, for the viscosity solution $u$ of \eqref{prob:main}, we have
\begin{equation*}
	u_t \approx [\textnormal{dist}(x,\partial \Omega)]^\gamma F(D^2 u,x,t) \quad \text{near } \partial_s \Omega_T.
\end{equation*}
In other words, it can be inferred that the boundary regularity for the fully nonlinear equation with degeneracy $[\textnormal{dist}(\cdot,\partial \Omega)]^\gamma$ is related to the boundary regularity for the equation in \eqref{prob:main}. Based on this fact, we first discuss boundary estimates for functions $u$ that satisfies
\begin{equation} \label{eq:dist_degen}
	\left\{\begin{aligned}
		u_t & \ge [\textnormal{dist}(x,\partial \Omega)]^\gamma \mathcal{M}_{\lambda,\Lambda}^-(D^2 u) && \text{in the viscosity sense in } \Omega_T \cap Q_{\rho}  \\
		u_t &\leq [\textnormal{dist}(x,\partial \Omega)]^\gamma \mathcal{M}_{\lambda, \Lambda}^+(D^2 u) && \text{in the viscosity sense in } \Omega_T \cap Q_{\rho} \\
		u&=g && \text{on } \partial_s \Omega_T \cap Q_{\rho}  .
	\end{aligned}\right.
\end{equation}

Throughout this section, we assume that $0 \in \partial \Omega$ and $e_n$ is the normal vector $\partial \Omega$ at $0$. We define the oscillation of the boundary suggested in \cite{LZ20,LZ22,Wan92b} as follows:
\begin{equation*}
	\osc_{B_\rho} \partial \Omega \coloneqq \sup_{x \in \partial \Omega \cap B_\rho} x_n -\inf_{x \in \partial\Omega\cap B_\rho} x_n \quad \text{for }\rho >0.
\end{equation*}

%==========================================================================
%				Lemma1: C^{1,a}-estimates for general problem 
%==========================================================================
\begin{lemma} \label{lem1:gen}
	Let $\overline{\alpha}\in (0,1)$ and $C_*>1$ be as in \Cref{result:ly24}. Then, for any $\alpha \in (0,\overline{\alpha})$, there exists $\delta >0$ depending only on $n$, $\lambda$, $\Lambda$, $\gamma$, and $\alpha$ such that if $u$ is a function that satisfies \eqref{eq:dist_degen} with 
	\begin{equation*}
		\|u\|_{L^{\infty}(\Omega_T \cap Q_{\rho})} \leq 1, \quad 
		\|g\|_{L^{\infty}( \partial_s \Omega_T \cap Q_{\rho}  )} \leq \delta,  \quad \text{and} \quad
		\osc_{B_\rho} \partial \Omega \leq \delta,
	\end{equation*}
then there exists a constant $a \in \mathbb{R}$ such that
\begin{equation*} 
	\|u-ax_n\|_{L^{\infty}(\Omega_T\cap Q_{\eta})} \leq \eta^{1+\alpha}  
\end{equation*}
and $|a| \leq C_*$, where $\eta>0$ is a constant depending only on $n$, $\lambda$, $\Lambda$, $\gamma$, $\alpha$,  $\rho$, and $T$.
\end{lemma}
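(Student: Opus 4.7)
The plan is a standard compactness reduction to the flat half-space setting of \Cref{result:ly24}. Fix $\alpha \in (0, \overline{\alpha})$ and choose $\eta \in (0, \min\{\rho/4, \sqrt{T}/2\})$ small enough that $C_* \eta^{\overline{\alpha} - \alpha} \leq 1/2$, which is possible precisely because $\alpha < \overline{\alpha}$. Argue by contradiction: if no $\delta$ works for this $\eta$, then for each $k \in \mathbb{N}$ there exist a domain $\Omega^{(k)}$ with $0 \in \partial \Omega^{(k)}$, $e_n$ the inner normal there, and $\osc_{B_\rho} \partial \Omega^{(k)} \leq 1/k$; boundary data $g^{(k)}$ with $\|g^{(k)}\|_{L^\infty} \leq 1/k$; and a function $u^{(k)} \in C(\overline{\Omega^{(k)}_T})$ with $\|u^{(k)}\|_\infty \leq 1$ satisfying \eqref{eq:dist_degen} on $\Omega^{(k)}_T \cap Q_\rho$, for which no $a \in [-C_*, C_*]$ achieves $\|u^{(k)} - a x_n\|_{L^\infty(\Omega^{(k)}_T \cap Q_\eta)} \leq \eta^{1+\alpha}$.

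Since $\osc_{B_\rho} \partial \Omega^{(k)} \to 0$, the sets $\Omega^{(k)} \cap B_{\rho/2}$ converge in Hausdorff distance to $B_{\rho/2}^+$, and $\textnormal{dist}(\cdot, \partial \Omega^{(k)}) \to x_n$ uniformly on $\overline{B_{\rho/2}^+}$. Standard interior Krylov--Safonov estimates on subregions where $\textnormal{dist}(\cdot, \partial \Omega^{(k)}) \geq \varepsilon$ (where the equation is uniformly parabolic with bounded coefficients), combined with a boundary H\"older barrier at $\partial_s \Omega^{(k)}_T \cap Q_\rho$ exploiting $\|g^{(k)}\|_\infty \to 0$, yield a uniform modulus of continuity for $(u^{(k)})$ on compact subsets of $\overline{Q_{\rho/2}^+}$, and in particular $u^{(k)} \to 0$ uniformly as $x_n \to 0$. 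A diagonal Arzela--Ascoli argument then extracts a subsequence converging locally uniformly to some $u^\infty \in C(\overline{Q_{\rho/2}^+})$ with $u^\infty = 0$ on $\{x_n = 0\}$ and $\|u^\infty\|_\infty \leq 1$. Stability of viscosity solutions under uniform convergence of the coefficient $\textnormal{dist}(\cdot, \partial \Omega^{(k)})^\gamma \to x_n^\gamma$ places $u^\infty$ in the class covered by \Cref{result:ly24} on $Q_{\rho/2}^+$, which produces $a^\infty \in \mathbb{R}$ with $|a^\infty| \leq C_*$ and
\begin{equation*}
	|u^\infty(x,t) - a^\infty x_n| \leq C_* (|x| + \sqrt{|t|})^{\overline{\alpha}} x_n \quad \text{on } \overline{Q_{\rho/4}^+}.
\end{equation*}
For $(x,t) \in Q_\eta^+$ this is at most $C_* \eta^{\overline{\alpha}} \cdot \eta \leq \tfrac{1}{2}\eta^{1+\alpha}$, and locally uniform convergence of $u^{(k)}$ combined with the Hausdorff convergence $\Omega^{(k)}_T \cap Q_\eta \to Q_\eta^+$ gives $\|u^{(k)} - a^\infty x_n\|_{L^\infty(\Omega^{(k)}_T \cap Q_\eta)} \leq \eta^{1+\alpha}$ for all large $k$, the desired contradiction.

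The main obstacle is establishing the uniform modulus of continuity up to the curved lateral boundary $\partial_s \Omega^{(k)}_T \cap Q_\rho$, because Krylov--Safonov theory a priori yields only interior estimates and the degeneracy $\textnormal{dist}(\cdot, \partial \Omega^{(k)})^\gamma$ vanishes precisely at the boundary where boundary control is most delicate. This can be handled by explicit comparison barriers with H\"older-type boundary behavior, built using that the degeneracy is a power of the distance to $\partial \Omega^{(k)}$, in the spirit of those in \cite{LY23}; their construction is uniform in $k$ and robust under the perturbation $\osc_{B_\rho} \partial \Omega^{(k)} \leq 1/k$. Once the uniform modulus of continuity is in place, the viscosity-sense stability is the standard argument for continuous perturbations of the coefficients.
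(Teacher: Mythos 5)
Your proposal is correct and follows essentially the same compactness argument as the paper: assume failure, extract a convergent subsequence, pass to the flat half-space limit via stability, apply \Cref{result:ly24} to the limit, and transfer the estimate back to a large index for a contradiction. The only minor slip is in the final numerical bound: for $(x,t)\in Q_\eta^+$ one has $|x|+\sqrt{|t|}<2\eta$, so the estimate from \Cref{result:ly24} gives $C_*(2\eta)^{\overline\alpha}\eta$, not $C_*\eta^{\overline\alpha}\eta$; this is absorbed by shrinking $\eta$ slightly (the paper requires $2^{1+\overline\alpha}\eta^{\overline\alpha-\alpha}C_*<1/2$), and does not affect the argument. You are also more explicit than the paper about the source of equicontinuity up to the lateral boundary (barriers exploiting $\|g^{(k)}\|_\infty\to 0$ combined with interior Krylov--Safonov), which the paper compresses into a single sentence about "interior H\"older estimates" applied to the zero extension; this is a fair place to add detail.
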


\begin{proof}
Suppose that the conclusion does not hold. Then, there exist sequences $\{u_k\}_{k=1}^\infty$, $\{g_k\}_{k=1}^\infty$, and $\{\Omega_k\}_{k=1}^\infty$ such that  
\begin{equation*}
	\|u_k\|_{L^{\infty}((\Omega_k)_T \cap Q_{\rho})} \leq 1, \quad 
	\|g_k\|_{L^{\infty}(\partial_s (\Omega_k)_T \cap Q_{\rho})} \leq 1/k, \quad
	\osc_{B_\rho} \partial \Omega_k \leq 1/k, 
\end{equation*}
and $u_k$ is a function that satisfies
	\begin{equation*}
		\left\{\begin{aligned}
			\partial_t u_k & \ge [\textnormal{dist}(x,\partial \Omega_k)]^\gamma \mathcal{M}_{\lambda,\Lambda}^-(D^2 u_k)  && \text{in the viscosity sense in } (\Omega_k)_T \cap Q_{\rho} \\
			\partial_t u_k & \leq [\textnormal{dist}(x,\partial \Omega_k)]^\gamma \mathcal{M}_{\lambda,\Lambda}^+(D^2 u_k)  && \text{in the viscosity sense in } (\Omega_k)_T \cap Q_{\rho} \\
			u_k&=g_k && \text{on } \partial_s (\Omega_k)_T \cap Q_{\rho}.
		\end{aligned}\right.
	\end{equation*}
Moreover, for any constant $a \in \mathbb{R}$ satisfying $|a| \leq C_*$, we have
\begin{align}\label{eq:contradiction}
	\|u_k-ax_n\|_{L^{\infty}((\Omega_k)_T\cap Q_{\eta})} >\eta^{1+\alpha},
\end{align}
where $\eta \in (0,1)$ will be determined later.

Note that $\{\tilde u_k\}_{k=1}^{\infty}$ is uniformly bounded and by the interior H\"older estimates, $\{\tilde u_k\}_{k=1}^{\infty}$ is equicontinuous in compact sets of $\bigcup_{k=1}^\infty \overline{(\Omega_k)_T}$, where 
\begin{equation*}
	\tilde u_k \coloneqq
	\begin{cases}
		u_k & \text{on } \overline{(\Omega_k)_T} \\
		0 & \text{otherwise.}
	\end{cases}
\end{equation*}
Thus, by Arzela-Ascoli theorem, there exist a subsequence $\{\tilde u_{k_j}\}_{j=1}^{\infty}$ of $\{ \tilde u_k\}_{k=1}^{\infty}$ and a limit function $\overline{u}$ such that $\tilde u_{k_j} \to \overline{u}$ uniformly in any compact set of  $\bigcup_{k=1}^\infty \overline{(\Omega_k)_T}$ as $j \to \infty$. Similarly, since the set $\{\chi_{\Omega_k}\}_{k=1}^{\infty}$ of characteristic functions is uniformly bounded and equicontinuous in any compact set of $\bigcup_{k=1}^\infty \Omega_k$, we also know that  there exist a subsequence $\{\Omega_{k_j}\}_{j=1}^{\infty}$ of $\{ \Omega_k\}_{k=1}^{\infty}$ and a limit domain $\Omega_\infty$ such that $\Omega_{k_j} \to \Omega_\infty$ as $j \to \infty$.

The stability theorem yields that 
\begin{equation*}
	\left\{\begin{aligned}
		\overline{u}_t & \ge [\textnormal{dist}(x,\partial \Omega_\infty)]^\gamma \mathcal{M}_{\lambda,\Lambda}^-(D^2 \overline{u}) && \text{in the viscosity sense in } (\Omega_\infty)_T\cap Q_{\rho}\\
		\overline{u}_t & \leq [\textnormal{dist}(x,\partial \Omega_\infty)]^\gamma \mathcal{M}_{\lambda,\Lambda}^+(D^2 \overline{u}) && \text{in the viscosity sense in } (\Omega_\infty)_T \cap Q_{\rho} \\
		\overline{u} &= 0 && \text{on } \partial_s (\Omega_\infty)_T \cap Q_{\rho}.
	\end{aligned}\right.
\end{equation*}
Furthermore, from $\osc_{B_\rho} \partial \Omega_{\infty} =\lim_{j\to\infty }\osc_{B_\rho} \partial \Omega_{k_j} = 0$, we have 
\begin{equation*}
	\textnormal{dist}(x,\partial \Omega_\infty) = x_n \quad \text{for all } x \in \Omega_\infty \cap B_{\rho}.
\end{equation*}
So, we can apply \Cref{result:ly24} to $\overline{u}$, we have
\begin{align*}
	|\overline{u} (x,t)- \overline{a} x_n | \leq C_*(|x|+\sqrt{|t|})^{1+\overline{\alpha}} \quad \text{for all $(x,t) \in \overline{(\Omega_\infty)_T \cap Q^+_{\rho/2}}$}
\end{align*}
and $|\overline{a}| \leq C_*$. For any $\alpha \in (0,\overline{\alpha})$, we take $\eta>0$ small enough so that $2^{1+\overline{\alpha}} \eta^{\overline{\alpha}-\alpha} C_*<1/2$. Then we observe that
\begin{align*}
	\|\overline{u}-\overline{a} x_n\|_{L^{\infty}(Q_{\eta}^+)} \leq 2^{1+\overline{\alpha}} \eta^{1+\overline{\alpha}} C_* <\eta^{1+\alpha}/2.
\end{align*}
On the other hand, by letting $k_j \to \infty$ in \eqref{eq:contradiction} with $a=\overline{a}$, we have
\begin{align*}
	\|\overline{u}-\overline{a} x_n\|_{L^{\infty}(Q_{\eta}^+)} \ge\eta^{1+\alpha},
\end{align*}
which leads to the contradiction.
\end{proof}

A standard technique to establish the boundary $C^{1,\alpha}$-estimates utilizes boundary Lipschitz estimates and Hopf principle to sandwich the solution $u$ between the two linear functions and applies it repeatedly through scaling to find a linear function that approximates $u$; we refer to \cite{SS14,Wan92b} for uniformly elliptic/parabolic equations and \cite{LY24} for degenerate/singular parabolic equations.
This technique is not applicable to 
\begin{equation*}
	u^{\gamma} \mathcal{M}^{-}_{\lambda, \Lambda}(D^2u)  \leq u_t \leq u^{\gamma} \mathcal{M}^{+}_{\lambda, \Lambda}(D^2u)  
\end{equation*}
because the coefficient function $u^\gamma$ of the Pucci's extremal operators $\mathcal{M}^{\pm}_{\lambda, \Lambda}$ is shifted in parallel by a linear function in the process of repeatedly applying this technique, and the class of the equation changes. 
One way of overcoming this difficulty is to establish regularity results for functions in the wider solution class. 

%==========================================================================
%				Lemma2: C^{1,a}-estimates for function in PME class
%==========================================================================
\begin{lemma} \label{lem2:gen}
	Let $\overline{\alpha}\in(0,1)$,  $C_*>1$, $\delta>0$, and $\eta >0$ be as in \Cref{lem1:gen}. Assume that  $\alpha \in(0,\overline{\alpha})$ and $u \in C(\overline{\Omega_T})$ is a nonnegative function that satisfies \eqref{eq:s-class} with $\|u\|_{L^{\infty}(\Omega_T)} \leq 1$ and $\|\partial \Omega\|_{C^{1,\alpha}(0)} \leq \delta /A$, where $A>2$ will be determined later. Then there exists a sequence $\{a_k\}_{k=-1}^{\infty}$
such that for all $k \geq 0$, we have 
\begin{align}\label{eq:lin1_bd}
	\|u-a_kx_n\|_{L^{\infty}(\Omega_T \cap Q_{\eta^k})} \leq \eta^{k(1+\alpha)}
\end{align}
and
\begin{align}\label{eq:diff_ak}
	|a_k-a_{k-1}| \leq  C_* \eta^{(k-1) \alpha}.
\end{align}
\end{lemma}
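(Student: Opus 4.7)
The plan is to induct on $k$, using \Cref{lem1:gen} at each step after a rescaling. The base case $k=0$ is trivial: set $a_{-1}=a_0=0$, so \eqref{eq:lin1_bd} reduces to the hypothesis $\|u\|_{L^\infty(\Omega_T)}\le 1$ and \eqref{eq:diff_ak} becomes $0\le C_*\eta^{-\alpha}$.

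For the inductive step, assuming $a_0,\ldots,a_k$ have been constructed, I would first use \Cref{lem:u<x} to place $u$ inside the wider class \eqref{eq:dist_degen}. Since $C_1\,\textnormal{dist}(x,\partial\Omega)\le u\le C_2\,\textnormal{dist}(x,\partial\Omega)$ and $c\,\mathcal{M}^\pm_{\lambda,\Lambda}(M)=\mathcal{M}^\pm_{c\lambda,c\Lambda}(M)$ for $c>0$, writing
\[
u^\gamma\mathcal{M}^\pm_{\lambda,\Lambda}(D^2u)=[\textnormal{dist}(x,\partial\Omega)]^\gamma\mathcal{M}^\pm_{(u/\textnormal{dist})^\gamma\lambda,\,(u/\textnormal{dist})^\gamma\Lambda}(D^2u)
\]
together with monotonicity of $\mathcal{M}^\pm$ in the ellipticity constants shows that $u$ also satisfies \eqref{eq:dist_degen} with modified constants $\lambda'=C_1^\gamma\lambda$, $\Lambda'=C_2^\gamma\Lambda$. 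I would then form the rescaled function
\[
v(y,s):=\eta^{-k(1+\alpha)}\bigl[u(\eta^k y,\eta^{2k}s)-a_k\eta^k y_n\bigr]\qquad\text{on }\Omega^{(k)}_T\cap Q_1,\quad\Omega^{(k)}:=\eta^{-k}\Omega.
\]

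Three of the four hypotheses of \Cref{lem1:gen} for $v$ are then straightforward. First, the inductive estimate \eqref{eq:lin1_bd} at level $k$ gives $\|v\|_{L^\infty(\Omega^{(k)}_T\cap Q_1)}\le 1$. Second, on $\partial_s\Omega^{(k)}_T\cap Q_1$ one has $u=0$ so $v=-a_k\eta^{-k\alpha}y_n$; the graph representation of $\partial\Omega^{(k)}$ near $0$ yields $|y_n|\le\|\partial\Omega^{(k)}\|_{C^{1,\alpha}(0)}=\eta^{k\alpha}\|\partial\Omega\|_{C^{1,\alpha}(0)}\le\eta^{k\alpha}\delta/A$, while the telescoping bound
\[
|a_k|\le\sum_{j=1}^{k}C_*\eta^{(j-1)\alpha}\le\frac{C_*}{1-\eta^\alpha}=:A_0
\]
(independent of $k$) gives $\|v\|_{L^\infty(\partial_s\Omega^{(k)}_T\cap Q_1)}\le(A_0/A)\delta\le\delta$ provided $A\ge A_0$. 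Third, $\osc_{B_1}\partial\Omega^{(k)}\le\eta^{k\alpha}\delta/A\le\delta$. Once the equation hypothesis is verified, \Cref{lem1:gen} produces $\overline a\in\mathbb{R}$ with $|\overline a|\le C_*$ and $\|v-\overline a y_n\|_{L^\infty(\Omega^{(k)}_T\cap Q_\eta)}\le\eta^{1+\alpha}$, and setting $a_{k+1}:=a_k+\overline a\eta^{k\alpha}$ and undoing the scaling yields \eqref{eq:diff_ak} and \eqref{eq:lin1_bd} at level $k+1$, closing the induction.

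The main obstacle is the fourth hypothesis, namely that $v$ genuinely lies in \eqref{eq:dist_degen} with the ellipticity $(\lambda',\Lambda')$. Standard parabolic rescaling is not perfectly adapted to the degenerate operator $[\textnormal{dist}]^\gamma F$, whose natural parabolic scale is $t\sim r^{2-\gamma}$ rather than $r^2$; a direct computation yields
\[
\partial_sv\gtreqless\eta^{k\gamma}[\textnormal{dist}(y,\partial\Omega^{(k)})]^\gamma\mathcal{M}^{\mp}_{\lambda',\Lambda'}(D^2v),
\]
so $v$ lies in \eqref{eq:dist_degen} only with the degenerating effective ellipticity $(\eta^{k\gamma}\lambda',\eta^{k\gamma}\Lambda')$. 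I would resolve this by working throughout with intrinsic parabolic cylinders $B_{\eta^k}\times(-\eta^{k(2-\gamma)},0]\supset Q_{\eta^k}$, under which the equation \eqref{eq:dist_degen} is invariant and the conclusion on $Q_{\eta^k}$ only strengthens; alternatively, one can exploit that the H\"older exponent $\overline\alpha$ and constant $C_*$ produced by \Cref{result:ly24} depend only on the ellipticity ratio $\Lambda'/\lambda'$, so that the further time rescaling $\tilde s=\eta^{k\gamma}s$ returns $v$ to the standard-ellipticity class on a shorter, but still admissible, cylinder, keeping $\overline\alpha, C_*, \delta, \eta$ uniform in~$k$.
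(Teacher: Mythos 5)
Your proposal is correct and follows essentially the same route as the paper: apply \Cref{lem:u<x} to place $u$ in the wider class \eqref{eq:dist_degen} with modified ellipticity constants $C_1^\gamma\lambda,\,C_2^\gamma\Lambda$, then induct by rescaling and invoking \Cref{lem1:gen}, bounding the boundary term $g$ via the telescoping estimate $|a_k|\le C_*/(1-\eta^\alpha)$ together with $\|\partial\Omega\|_{C^{1,\alpha}(0)}\le\delta/A$. The one place you depart from the paper is cosmetic: the paper uses the intrinsic scaling $s=t/r^{2-\gamma}$ from the outset, which renders \eqref{eq:dist_degen} exactly scale-invariant, whereas you first write the standard $s=t/r^2$, observe the spurious factor $\eta^{k\gamma}$, and then recover the intrinsic scaling as a ``further time rescaling'' $\tilde s=\eta^{k\gamma}s$ — composing your two time changes gives precisely $t\mapsto r^{2-\gamma}\tilde s$, so you end at the same place via a detour.
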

 
\begin{proof}
	 By \Cref{lem:u<x}, 
we obtain the following estimate:
\begin{equation*}
	C_1 \,\textnormal{dist}(x,\partial \Omega) \leq u(x,t) \leq C_2 \,\textnormal{dist}(x,\partial \Omega) \quad \text{for all } (x,t) \in \overline{\Omega_T}.
\end{equation*}
This implies that $u$ satisfies 
\begin{equation*} 
	[\textnormal{dist}(x,\partial \Omega)]^\gamma \mathcal{M}_{C_1^\gamma \lambda, C_2^\gamma \Lambda}^-(D^2 u) \leq u_t \leq [\textnormal{dist}(x,\partial \Omega)]^\gamma \mathcal{M}_{C_1^\gamma \lambda, C_2^\gamma\Lambda}^+(D^2 u) 
\end{equation*}
in the viscosity sense in $\Omega_T$. 

	The proof is by induction. For $k=0$, by setting $a_{-1}=a_0=0$, the conditions immediately hold. Suppose that the conclusion holds for $k \geq 0$. We claim that the conclusion also holds for $(k+1)$. For this purpose, let $r=\eta^k$, $y=x/r$, $s=t/r^{2-\gamma}$, and 
	\begin{align*}
		v(y,s) \coloneqq \frac{u(x,t)-a_kx_n}{r^{1+\alpha}}.
	\end{align*}
%==========================================================================
% Calculation 
%\begin{align*}
%	u(x,t)&=a_k x_n + r^{1+\alpha} v(y,s) \\
%	u_t(x,t)&= r^{\alpha+\gamma-1} v_s (y,s) \\
%	D_x^2 u(x,t) &= r^{\alpha-1} D_y^2 v(y,s)
%\end{align*}
%\begin{equation*}
%	 r^{\alpha + \gamma -1}[\textnormal{dist}(y,\partial(r^{-1} \Omega))]^\gamma \mathcal{M}^{-}_{\lambda, \Lambda}(D^2v) \leq r^{\alpha+\gamma-1} v_s \leq r^{\alpha + \gamma -1}[\textnormal{dist}(y,\partial (r^{-1}\Omega))]^\gamma\mathcal{M}^{+}_{\lambda, \Lambda}(D^2v)   
%\end{equation*}
%==========================================================================
Then $v$ satisfies
	\begin{equation*}
		\left\{\begin{aligned}
		 	v_s &\ge [\textnormal{dist}(y,\partial \widetilde \Omega)]^\gamma \mathcal{M}^{-}_{C_1^\gamma \lambda, C_2^\gamma \Lambda}(D^2v) && \text{in the viscosity sense in } \widetilde{\Omega}_{\widetilde T} \cap Q_{1} \\
		 	v_s &\leq [\textnormal{dist}(y,\partial \widetilde \Omega)]^\gamma \mathcal{M}^{+}_{C_1^\gamma \lambda, C_2^\gamma \Lambda}(D^2v) && \text{in the viscosity sense in } \widetilde{\Omega}_{\widetilde T} \cap Q_{1} \\
			v &=g && \text{on } \partial_s \widetilde{\Omega}_{\widetilde T} \cap Q_{\rho} ,
		\end{aligned}\right.
	\end{equation*}
where $\widetilde\Omega \coloneqq r^{-1}\Omega = \{r^{-1} x: x \in \Omega\}$, $\widetilde T = T/r^{2-\gamma}$, and $g(y,s) \coloneqq -r^{-\alpha-1} a_k x_n $.

It immediately follows that $\|v\|_{L^{\infty}(\widetilde{\Omega}_{\widetilde T}\cap Q_1)} \leq 1$. From \eqref{eq:diff_ak}, there exists a constant $A>2$ depending only on $n$, $\lambda$, $\Lambda$, $\gamma$, $\alpha$,  and $T$ such that $|a_k| \leq A$ for all $k \ge1$ and hence we have
\begin{equation*}
	 |g(y,s)|  \leq r^{-\alpha-1} \times A \times  \|\partial \Omega\|_{C^{1,\alpha}(0)}|x'|^{1+\alpha} \leq \delta \quad \text{for all } (y,s) \in \partial_s \widetilde{\Omega}_{\widetilde T} \cap Q_{1} 
\end{equation*}
 Furthermore, we know that
\begin{equation*}
	\osc_{B_1} \partial \widetilde\Omega = \frac{1}{r} \osc_{B_{r}} \partial \Omega= 2 \|\partial \Omega\|_{C^{1,\alpha}(0)}r^{\alpha}  \leq \delta .
\end{equation*}
Thus, by applying \Cref{lem1:gen} to $v$, there exists a constant $\widetilde{a}\in\mathbb{R}$ such that 
\begin{align}\label{eq:induc1_bd}
	\|v-\widetilde{a} y_n\|_{L^{\infty}(\widetilde{\Omega}_{\widetilde T} \cap Q_{\eta})} \leq \eta^{1+\alpha}
\end{align}
and
\begin{align}\label{eq:induc2_bd}
	|\widetilde{a}| \leq C_*.
\end{align}
We now let $a_{k+1} \coloneqq a_k +r^{\alpha} \widetilde{a}$. Then \eqref{eq:induc2_bd} implies that
\begin{align*}
	|a_{k+1}-a_k|  \leq  C_* \eta^{k\alpha}.
\end{align*}
Finally, \eqref{eq:induc1_bd} shows that  
\begin{align*}
	|u(x,t)-a_{k+1}x_n| &=|u(x,t)-a_kx_n-r^{\alpha}\widetilde{a}x_n | \\
	&\leq r^{1+\alpha} |v(x/r,t/r^{2-\gamma})-\widetilde{a}(x_n/r)| \\
	&\leq \eta^{(k+1)(1+\alpha)}
\end{align*}
for all $(x,t) \in \Omega_T\cap Q_{\eta^{k+1}}$, as desired.
\end{proof}
%==========================================================================
%				Proof of Main Theorem2 (Boundary C1a-estimates)
%==========================================================================
\begin{proof}[Proof of \Cref{thm:bdry_c1a}]
	Without loss of generality, assume that $(x_0,t_0)=(0,0)$, $\|u\|_{L^{\infty}(\Omega_T)}\leq1$, and $e_n$ is the normal vector $\partial \Omega$ at $0$. We argue that the assumption $\|\partial \Omega\|_{C^{1,\alpha}(0)} \leq \delta /2$  in \Cref{lem2:gen} holds even after the appropriate reduction argument. In fact, since $\partial \Omega\in C^{1,\alpha}(0)$, we have
	\begin{align*}
		|x_n| \leq [\partial\Omega]_{C^{1,\alpha}(0)} |x'|^{1+\alpha} \quad \text{for all } x \in \partial \Omega \cap Q_1.
	\end{align*}
	This implies that 
	\begin{align*}
		|y_n| \leq \rho^\alpha [\partial \Omega]_{C^{1,\alpha}(0)} |y'|^{1+\alpha} \quad \text{for all } y \in \partial \widetilde{\Omega} \cap Q_1,
	\end{align*}
where $\widetilde{\Omega} \coloneqq \rho^{-1} \Omega =\{ \rho^{-1} x: x \in \Omega\}$. By taking $\rho>0$ small enough, we may assume that $\|\partial \Omega\|_{C^{1,\alpha}(0)} \leq \delta /2$.

Therefore, by \Cref{lem2:gen}, there exists a sequence $\{a_k\}_{k=-1}^{\infty}$ satisfying \eqref{eq:lin1_bd}. According to the standard argument, we obtain limit $a$ such that $a_k \to a$ satisfying
\begin{align*}
	|a_k-a| \leq C\eta^{k \alpha }.
\end{align*}

Finally, for any $(x,t) \in \Omega_T \cap  Q_1$, there exists $j \geq 0$ such that $\eta^{j+1} \leq \max\{ |x|, \sqrt{|t|} \}< \eta^j$, and hence we conclude that
\begin{equation*}
	|u(x,t)-ax_n| \leq |u(x,t)-a_j x_n|+|a_j -a| |x_n| \leq C(|x|+\sqrt{|t|})^{1+\alpha},
\end{equation*}
which implies $u\in C^{1, \alpha}(0,0)$.
\end{proof}
%==========================================================================
%				Remark: Degenerate equation by a distance function
%==========================================================================
\begin{remark} \label{rmk:c1a_dist_degen}
Let $f\in C(\Omega_T)\cap L^\infty(\Omega_T)$, $g\in C^{1,\alpha}(0,0)$, and $\partial \Omega \in C^{1,\alpha}(0)$. Then, the results of \Cref{lem1:gen}, \Cref{lem2:gen}, and \Cref{thm:bdry_c1a}  also hold for functions that satisfies
\begin{equation*}
	\left\{\begin{aligned}
		u_t & \ge [\textnormal{dist}(x,\partial \Omega)]^\gamma \mathcal{M}_{\lambda,\Lambda}^-(D^2 u) - \|f\|_{L^\infty(\Omega_T)} && \text{in the viscosity sense in } \Omega_T \cap Q_{\rho}  \\
		u_t &\leq [\textnormal{dist}(x,\partial \Omega)]^\gamma \mathcal{M}_{\lambda, \Lambda}^+(D^2 u) + \|f\|_{L^\infty(\Omega_T)} && \text{in the viscosity sense in } \Omega_T \cap Q_{\rho} \\
		u&=g && \text{on } \partial_s \Omega_T \cap Q_{\rho} 
	\end{aligned}\right.
\end{equation*}
by the same proof and hence we obtain
\begin{equation*}
	|u(x,t)-ax_n| \leq C(\|u\|_{L^\infty(\Omega_T)} + \|f\|_{L^\infty(\Omega_T)} + \|g\|_{C^{1,\alpha}(0,0)})(|x|+\sqrt{|t|})^{1+\alpha}
\end{equation*}
for all $(x,t) \in \Omega_T \cap  Q_1$, where $C>0$ is a constant depending only on $n$, $\lambda$, $\Lambda$, $\gamma$, $\alpha$, and $\|\partial \Omega\|_{C^{1,\alpha}(0)}$.
\end{remark}
%==========================================================================
%
%						The Cauchy--Dirichlet Problem
% 
%==========================================================================
\section{Global $C^{2,\alpha}$-estimates} \label{sec:dirichlet}
In this section, we prove the global $C^{2,\alpha}$-estimates for viscosity solutions of \eqref{prob:main}. From the intuition that $u(x,t)\approx \textnormal{dist}(x,\partial \Omega)$ near $\partial_s \Omega_T$, we were able to convert the $u^\gamma$ degeneracy into the $[\textnormal{dist}(\cdot,\partial \Omega)]^\gamma$ degeneracy, which made it possible to apply the Schauder-type estimates. However, this intuition only works up to the $C^{1,\alpha}$-estimates for \eqref{prob:main}, so a more accurate approximation is required to obtain the $C^{2,\alpha}$-estimates. Roughly speaking, to obtain the $C^{2,\alpha}$-estimates, we need an approximation of the following form:
\begin{equation*}
	\frac{u(x,t)}{\textnormal{dist}(x,\partial \Omega)} -  a(x',t) \approx [ \textnormal{dist}(x,\partial \Omega)]^{\alpha} \quad \text{near $\partial_s \Omega_T$,}
\end{equation*}
which will be proven in \Cref{lem:ly24_gen}.

Throughout this section, we assume that the nonlinear operator $F$ satisfies  \textnormal{\ref{F1}}-\textnormal{\ref{F3}},  $0 \in \partial \Omega$, and $e_n$ is the normal vector $\partial \Omega$ at $0$. 
%==========================================================================
%						Boundary C^{2,a}-estimates
%==========================================================================
\subsection{Boundary $C^{2,\alpha}$-estimates}
We first discuss boundary estimates for viscosity solutions of
\begin{equation} \label{eq:dist_degen_c2a}
	\left\{\begin{aligned}
		u_t & = [\textnormal{dist}(x,\partial \Omega)]^\gamma  F(D^2 u,x,t) && \text{in } \Omega_T \cap Q_{\rho}  \\
		u&=g && \text{on } \partial_s \Omega_T \cap Q_{\rho}.
	\end{aligned}\right.
\end{equation}
%==========================================================================
%				Lemma1: C^{2,a}-estimates for general problem 
%==========================================================================
\begin{lemma} \label{lem1:gen_c2a}
	Let $C_\star>1$ be as in \Cref{result:ly24-2}. For any $\alpha \in (0,\overline{\alpha})$ with $\alpha < 1-\gamma$, there exists $\delta >0$ depending only on $n$, $\lambda$, $\Lambda$, $\gamma$, and $\alpha$ such that if $u$ is a viscosity solution of \eqref{eq:dist_degen_c2a} with
\begin{align*}
	&\|u\|_{L^{\infty}( \Omega_T \cap Q_{\rho} )} \leq 1, \quad
	u(0,0) =0 = |Du(0,0)|, \quad
	\|\partial \Omega\|_{C^{1,\alpha}(0)} \leq \delta, \\ 
	&\|\beta^1\|_{L^{\infty}(\Omega_T \cap Q_{\rho} )}\leq \delta, \quad 
	\|\beta^2\|_{L^{\infty}(\Omega_T \cap Q_{\rho} )}\leq \delta, \quad \text{and} \quad
	\|g\|_{C^{1,\alpha}(0,0)} \leq \delta,
\end{align*}
then there exists a polynomial $P(x)  = \sum_{i=1}^n a_i x_i x_n$ such that $ \sum_{i=1}^n |a_i| \leq C_\star$,  $F(D^2 P,0,0)=0$, and
\begin{equation*}
	\|u- P \|_{L^{\infty}(\Omega_T \cap Q_{\eta})} \leq \eta^{2+\alpha},
\end{equation*}
where $\eta>0$ is a constant depending only on $n$, $\lambda$, $\Lambda$, $\gamma$, $\alpha$,  $\rho$, and $T$.
\end{lemma}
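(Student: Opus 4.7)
My plan is a compactness/contradiction argument in the spirit of \Cref{lem1:gen}, but now using the flat-boundary $C^{2,\alpha}$-estimate \Cref{result:ly24-2} as the limiting profile in place of \Cref{result:ly24}.

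\textbf{Setup and compactness.} Assuming the conclusion fails with $\delta=1/k$, one obtains sequences $\Omega_k$, $F_k$, $g_k$, and viscosity solutions $u_k$ of \eqref{eq:dist_degen_c2a} with $\|\partial\Omega_k\|_{C^{1,\alpha}(0)}$, $\|\beta^1_k\|_\infty$, $\|\beta^2_k\|_\infty$, $\|g_k\|_{C^{1,\alpha}(0,0)}\leq 1/k$, $\|u_k\|_\infty\leq 1$, and $u_k(0,0)=|Du_k(0,0)|=0$, such that no polynomial of the prescribed form works in $Q_\eta$ (where $\eta$ is to be fixed at the end). The $u_k$ also satisfy a dist-weighted Pucci inclusion with inhomogeneity $[\textnormal{dist}(\cdot,\partial\Omega_k)]^\gamma\|\beta^2_k\|_\infty$, so \Cref{rmk:c1a_dist_degen} yields a uniform $C^{1,\alpha}(0,0)$-bound. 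Combined with interior $C^{1,\alpha}$-estimates (valid since the equation is uniformly parabolic away from $\partial_s(\Omega_k)_T$), Arzel\`a--Ascoli produces a subsequential limit $\bar u\in C(\overline{Q_{\rho/2}^+})$ with $\Omega_k\to\{x_n>0\}$, so $\textnormal{dist}(\cdot,\partial\Omega_k)\to x_n$ locally uniformly, and $F_k(M,x,t)\to F_\infty(M)$ locally uniformly, where $F_\infty$ depends only on the matrix argument and inherits uniform ellipticity and convexity/concavity from \textnormal{\ref{F1}}--\textnormal{\ref{F2}}. The stability of viscosity solutions then yields
\begin{equation*}
	\bar u_t = x_n^\gamma F_\infty(D^2\bar u)\ \text{in}\ Q_{\rho/2}^+, \qquad \bar u=0\ \text{on}\ \{x_n=0\}\cap Q_{\rho/2}.
\end{equation*}

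\textbf{Application of \Cref{result:ly24-2} and contradiction.} Choose $\beta\in(\alpha,\min\{\overline\alpha,1-\gamma\})$, which is possible since $\alpha<\overline\alpha$ and $\alpha<1-\gamma$. After harmless rescaling, \Cref{result:ly24-2} provides a polynomial $\bar P(x)$ of degree $\leq 2$ with $F_\infty(D^2\bar P)=0$, universally bounded coefficients, and $|\bar u-\bar P|\lesssim(|x|+\sqrt{|t|})^{1+\beta}x_n$. The limit conditions $\bar u\equiv 0$ on $\{x_n=0\}$, $\bar u(0,0)=0$, and $D\bar u(0,0)=0$ (the last preserved in the limit thanks to the uniform $C^{1,\alpha}(0,0)$-bound on $u_k$) force $\bar P$ to take the form $\sum_{i=1}^n a_i x_i x_n$ with $\sum|a_i|\leq C_\star$: indeed, $\bar P(x',0)=0$ rules out pure tangential quadratic terms, while the first-order vanishing rules out the linear-in-$x_n$ pure term. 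Taking $\eta$ small enough that the $(2+\beta)$-decay beats $\eta^{2+\alpha}/4$ in $Q_\eta$, and then perturbing the coefficient $a_n$ by an amount $o(1)$ to arrange $F_k(D^2 P_k,0,0)=0$ exactly (solvable by the monotonicity with rate $\geq\lambda$ of $\varepsilon\mapsto F_k(D^2\bar P+\varepsilon e_n\otimes e_n,0,0)$ combined with $F_k(D^2\bar P,0,0)\to F_\infty(D^2\bar P)=0$), produces a polynomial $P_k$ of the required form satisfying the target estimate for $k$ large, contradicting the failure assumption.

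\textbf{Main obstacle.} The principal technical difficulty is establishing equicontinuity of $u_k$ up to the \emph{entire} curved lateral boundary of $(\Omega_k)_T\cap Q_{\rho/2}$, not merely at the single base point $(0,0)$; this is handled by sliding the base point along $\partial_s(\Omega_k)_T$ and reapplying \Cref{rmk:c1a_dist_degen}, using that quantitative flatness of $\partial\Omega_k$ together with translation invariance of the pointwise estimate propagates the boundary regularity uniformly, at the modest cost of working on a shrunken cylinder.
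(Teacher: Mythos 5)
Your proposal is correct and follows essentially the same compactness/contradiction route as the paper's proof: blow up the assumption, pass to a flat half-space limit $\bar u_t = x_n^\gamma F_\infty(D^2\bar u)$ via stability, propagate the $C^{1,\alpha}(0,0)$-bound from \Cref{rmk:c1a_dist_degen} to fix $\bar u(0,0)=0=|D\bar u(0,0)|$, apply \Cref{result:ly24-2} with an intermediate exponent to obtain $\bar P=\sum a_i x_ix_n$, and then perturb the limiting polynomial to restore the exact compatibility condition $F_k(D^2P_k,0,0)=0$ before comparing against the contradiction inequality for large $k$. One small remark: in the final perturbation step you deform $D^2\bar P$ by $\varepsilon\, e_n\otimes e_n$ (so $P_k=\bar P+\tfrac{\varepsilon}{2}x_n^2$ stays of the required form $\sum a_ix_ix_n$), whereas the paper uses $\theta_{k_j}I_n$ and writes $\bar P+\theta_{k_j}|x|^2/2$, which strictly speaking leaves the class $\{\sum a_i x_i x_n\}$; your variant is the cleaner reading, and the uniform parabolicity $\partial_\varepsilon F_k(D^2\bar P+\varepsilon e_n\otimes e_n,0,0)\ge\lambda$ that you cite is exactly what makes it quantitatively solvable.
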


\begin{proof}
Suppose that the conclusion does not hold. Then, there exist sequences $\{u_k\}_{k=1}^\infty$, $\{g_k\}_{k=1}^\infty$, $\{\beta_k^1\}_{k=1}^\infty$, $\{\beta_k^2\}_{k=1}^\infty$, $\{F_k\}_{k=1}^\infty$, and $\{\Omega_k\}_{k=1}^\infty$ such that 
\begin{align*}
	& \|u_k\|_{L^{\infty}((\Omega_k)_T\cap Q_{\rho})}  \leq 1, \quad
	u_k(0,0) =0 =|Du_k(0,0)|, \quad
	\|\partial\Omega_k \|_{C^{1,\alpha}(0)} \leq 1/k, \\
	&\|\beta_k^1\|_{L^{\infty}((\Omega_k)_T\cap Q_{\rho} )} \leq 1/k, \quad
	\|\beta_k^2\|_{L^{\infty}((\Omega_k)_T\cap Q_{\rho} )}\leq 1/k, \quad
	\|g_k\|_{C^{1,\alpha}(0,0)}  \leq 1/k,  
\end{align*}
and $u_k$ is a viscosity solution of 
	\begin{equation*}
		\left\{\begin{aligned}
			\partial_ t u_k &=  [\textnormal{dist}(x,\partial \Omega_k)]^\gamma  F_k(D^2 u_k,x,t) && \text{in } (\Omega_k)_T\cap Q_{\rho} \\
			u_k &= g_k && \text{on } \partial_s(\Omega_k)_T \cap Q_{\rho}.
		\end{aligned}\right.
	\end{equation*}
Moreover, for any polynomial $P(x)  = \sum_{i=1}^n a_i x_i x_n$ satisfying $ \sum_{i=1}^n |a_i|  \leq C_\star$ and $F_k(D^2 P,0,0)=0$, we have
\begin{align}\label{eq:cont_c2a}
	\|u_k- P\|_{L^{\infty}((\Omega_k)_T \cap Q_{\eta})} >\eta^{2+\alpha},
\end{align}
where $\eta \in (0,1)$ will be determined later.

As in the proof of \Cref{lem1:gen}, there exist subsequences $\{u_{k_j}\}_{j=1}^{\infty}$, $\{\Omega_{k_j}\}_{j=1}^{\infty}$ and limits $\overline{u}$, $\Omega_\infty$ such that $u_{k_j} \to \overline{u}$ uniformly in any compact sets of $\bigcup_{k=1}^\infty \overline{(\Omega_k)_T} $ as $j \to \infty$ and $\lim_{j\to\infty} \Omega_{k_j} = \Omega_\infty$. 
Here, we omit the tilde symbol for simplicity.
Since $F_k$ is Lipschitz continuous in $M$ with a uniform Lipschitz constant, $\{F_k\}_{k=1}^{\infty}$ is also uniformly bounded and equicontinuous in any compact sets of $\mathcal{S}^n$. Then there exist a subsequence $\{F_{k_j}\}_{j=1}^{\infty}$ and a limit operator $\overline{F}$ such that $F_{k_j}(\cdot,0,0) \to \overline{F}$ uniformly in any compact sets of $\mathcal{S}^n$ as $j \to \infty$. It follows that 
\begin{align*}
	|F_{k_j}(M,x,t) - \overline{F}(M)| &\leq |F_{k_j}(M,x,t) - F_{k_j}(M,0,0)| + |F_{k_j}(M,0,0)- \overline{F}(M)| \\
	&\leq \beta_{k_j}^1(x,t) \|M\| + \beta_{k_j}^2(x,t)   + |F_{k_j}(M,0,0)- \overline{F}(M)| \\
	&\leq \frac{1}{k_j} (\|M\| +1)  + |F_{k_j}(M,0,0)- \overline{F}(M)| 
\end{align*}
for all $M\in\mathcal{S}^n$ and $(x,t) \in (\Omega_k)_T\cap Q_{\rho}$. Since the right-hand side above tends to zero uniformly in any compact sets in $\mathcal{S}^n$ as $j \to \infty$, the stability theorem yields that 
\begin{equation*}
	\left\{\begin{aligned}
		\overline{u}_t &= x_n^\gamma \overline{F}(D^2 \overline{u}) && \text{in } (\Omega_\infty)_T \cap Q_{\rho}^+  \\
		\overline{u} &= 0 && \text{on } \partial_s(\Omega_\infty)_T \cap Q_{\rho}^+.
	\end{aligned}\right.
\end{equation*}

Since $u_{k_j}(0,0) =0=|Du_{k_j}(0,0)|$ and $\partial \Omega_k \in C^{1,\alpha}(0)$, by applying \Cref{rmk:c1a_dist_degen} to $u_{k_j}$, we have
\begin{equation} \label{kth_r1a}
	\|u_{k_j}\|_{L^{\infty}((\Omega_{k_j})_T \cap Q_r)} \leq C r^{1+\alpha} \quad \text{for all } r \in (0,1),
\end{equation}
where $C>0$ is a constant depending only on $n$, $\lambda$, $\Lambda$, $\gamma$, $\alpha$, and $\|\partial \Omega_{k_j}\|_{C^{1,\alpha}(0)}$. Since $u_{k_j} \to \overline{u}$ uniformly, by letting $j \to \infty$ in \eqref{kth_r1a}, we obtain
\begin{equation*}
	\|\overline{u}\|_{L^{\infty}((\Omega_\infty)_T \cap Q_r)} \leq C r^{1+\alpha} \quad \text{for all } r \in (0,1),
\end{equation*}
which implies that $\overline{u}(0,0)=0=|D\overline{u}(0,0)|$. 
Thus, by \Cref{result:ly24-2}, there exists a polynomial $\overline{P}(x) = \sum_{i=1}^n \overline{a}_i x_i x_n$ such that $  \sum_{i=1}^n |\overline{a}_i| \leq C_\star$, $\overline{F}(D^2\overline{P})=0$, and
\begin{equation*}
	|\overline{u}(x,t)-\overline{P}(x)| \leq C_\star |x|^{1+\frac{\alpha+1-\gamma}{2}} x_n \quad \text{for all } (x,t) \in \overline{(\Omega_\infty)_T \cap Q_{\rho/2}^+}.
\end{equation*}

For any $\alpha \in (0,\overline{\alpha})$ with $\alpha<1-\gamma$, we take $\eta>0$ small enough so that $\eta^{\frac{1-\gamma-\alpha}{2}} C_\star<1/2$. Then we obtain
\begin{align*}
	\|\overline{u}-\overline{P}\|_{L^{\infty}(Q_{\eta}^+)} <\eta^{2+\alpha}/2.
\end{align*}
On the other hand, there exists $\{\theta_{k_j} \}_{j=1}^\infty$ with $\lim_{j\to\infty}\theta_{k_j} = 0$ and $|\theta_{k_j}| \leq 1$ such that $F_{k_j}(D^2\overline{P}+\theta_{k_j} I_n,0,0) = 0$, since $\lim_{j\to \infty}F_{k_j}(D^2 \overline{P},0,0) = \overline{F}(D^2\overline{P}) = 0$. From \eqref{eq:cont_c2a}, we have
 \begin{equation*}
	\|u_{k_j}- \overline{P} - \theta_{k_j} |x|^2/2\|_{L^{\infty}((\Omega_{k_j})_T \cap Q_{\eta})} >\eta^{2+\alpha},
\end{equation*}
and hence by letting $k_j \to \infty$, we obtain
\begin{align*}
	\|\overline{u}-\overline{P}\|_{L^{\infty}(Q_{\eta}^+)} \ge\eta^{2+\alpha},
\end{align*}
which leads to the contradiction. 
\end{proof}
%==========================================================================
%				Lemma: C^{2,a}-estimates for general problem 
%==========================================================================
\begin{lemma} \label{lem2:gen_c2a}
	Let  $C_\star>1$, $\delta>0$, and $\eta >0$ be as in \Cref{lem1:gen_c2a}. Suppose that  $\alpha \in(0,\overline{\alpha})$ with $\alpha< 1-\gamma$ and $u$ is a nonnegative viscosity solution of \eqref{eq:dist_degen_c2a} with 
	\begin{equation*}
		\|u\|_{L^{\infty}(\Omega_T)} \leq 1, \quad 
		u(0,0)=0, \quad
		Du(0,0)=0, \quad
		\|\partial \Omega\|_{C^{1,\alpha}(0)} \leq \frac{\delta}{2\sqrt{n}K},
	\end{equation*}
and 
\begin{align} 
	%|h(x,t)-h(0,0)| &\leq \left(\frac{\delta}{3K 2^{\overline{\alpha}\gamma}}\right)^{\frac{1}{\gamma}} (|x|+\sqrt{|t|})^{\overline\alpha} \quad \text{for all } (x,t) \in \overline{\Omega_T \cap Q_1}, \label{h_hol}  \\
	\beta^1(x,t) &\leq \frac{\delta}{K 2^{1+\alpha}} (|x|+\sqrt{|t|})^{\alpha} \quad \text{for all } (x,t) \in \overline{\Omega_T\cap Q_1} , \label{beta_hol1} \\
	\beta^2(x,t) &\leq \frac{\delta}{2^{1+\alpha}} (|x|+\sqrt{|t|})^{\alpha} \quad \text{for all } (x,t) \in \overline{\Omega_T\cap Q_1} , \label{beta_hol2} 
\end{align}  
and
\begin{equation*}
	|g(x,t)| \leq \frac{\delta}{2} (|x|+\sqrt{|t|})^{2+\alpha} \quad \text{for all } (x,t) \in \partial_s \Omega_T \cap Q_1  , \label{g_c2a} 
\end{equation*}
where  $K>1$ is a contant depending only on $n$, $\lambda$, $\Lambda$, $\gamma$, and $\alpha$. 
	Then there exists a sequence $\{P_k\}_{k=-1}^{\infty}$ of the form $P_k(x)= \sum_{i=1}^n a_i^k x_i x_n$
such that for all $k \geq 0$, we have 
\begin{equation*}
	\|u-P_k\|_{L^{\infty}(\Omega_T \cap Q_{\eta^k})} \leq \eta^{k(2+\alpha)}, \quad   F(D^2P_k,0,0)=0,
\end{equation*}
and
\begin{equation}\label{eq:diff_ak2}
	\sum_{i=1}^n |a^k_i-a^{k-1}_i| \leq  C_\star \eta^{(k-1)\alpha}.
\end{equation}
\end{lemma}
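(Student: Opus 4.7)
The plan is to prove the lemma by induction on $k$, with Lemma~\ref{lem1:gen_c2a} serving as the one-step improvement. The base case $k=0$ is immediate upon setting $P_{-1} = P_0 \equiv 0$: the conditions $u(0,0)=0$, $Du(0,0)=0$ combined with the boundary $C^{1,\alpha}$ estimates of Section~\ref{sec:c1a} (via \Cref{rmk:c1a_dist_degen}) ensure $\|u\|_{L^\infty(\Omega_T \cap Q_1)} \leq 1 = \eta^{0\cdot(2+\alpha)}$, while $F(D^2P_0,0,0) = F(O_n,0,0)=0$ by \ref{F1}.

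For the inductive step, assuming the conclusion for $k\geq 0$, set $r := \eta^k$ and perform the natural degenerate-parabolic rescaling
\begin{equation*}
    v(y,s) := \frac{u(ry, r^{2-\gamma}s) - P_k(ry)}{r^{2+\alpha}}, \qquad y := x/r, \qquad s := t/r^{2-\gamma}.
\end{equation*}
A direct computation shows that $v$ is a viscosity solution of $v_s = [\textnormal{dist}(y,\partial\widetilde\Omega)]^\gamma \widetilde F(D^2 v, y, s)$ in $\widetilde\Omega_{\widetilde T} \cap Q_1$ with boundary data $\widetilde g$, where $\widetilde\Omega := r^{-1}\Omega$, $\widetilde T := T/r^{2-\gamma}$, and
\begin{equation*}
    \widetilde F(M,y,s) := r^{-\alpha} F\bigl(r^\alpha M + D^2 P_k,\, ry,\, r^{2-\gamma}s\bigr), \qquad \widetilde g(y,s) := r^{-(2+\alpha)}[g(ry, r^{2-\gamma}s) - P_k(ry)].
\end{equation*}

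The heart of the proof is verifying the hypotheses of \Cref{lem1:gen_c2a} for $(v,\widetilde F,\widetilde g,\widetilde\Omega)$. The $L^\infty$ bound $\|v\|_{L^\infty}\leq 1$ comes from the inductive hypothesis; $v(0,0)=|Dv(0,0)|=0$ follows from $P_k(0)=0=DP_k(0)$ (since $P_k$ has only mixed $x_ix_n$ terms) together with $u(0,0)=|Du(0,0)|=0$; and $\|\partial\widetilde\Omega\|_{C^{1,\alpha}(0)} = r^\alpha \|\partial\Omega\|_{C^{1,\alpha}(0)} \leq \delta$ by the hypothesis and $r\leq 1$. For \ref{F1}, $\widetilde F$ has the same ellipticity constants $\lambda,\Lambda$ as $F$, and $\widetilde F(O_n,0,0) = r^{-\alpha}F(D^2P_k,0,0) = 0$ by the inductive hypothesis that $F(D^2P_k,0,0)=0$. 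Concavity/convexity \ref{F2} is preserved by the affine rescaling. For \ref{F3}, one computes
\begin{equation*}
    |\widetilde F(M,y,s) - \widetilde F(M,0,0)| \leq \widetilde\beta^1(y,s)\|M\| + \widetilde\beta^2(y,s),
\end{equation*}
with $\widetilde\beta^1(y,s) = \beta^1(ry, r^{2-\gamma}s)$ and $\widetilde\beta^2(y,s) = r^{-\alpha}[\beta^1(ry,r^{2-\gamma}s)\|D^2P_k\| + \beta^2(ry,r^{2-\gamma}s)]$. The sup-norm smallness is then established via \eqref{beta_hol1}--\eqref{beta_hol2} together with the a priori bound $\|D^2 P_k\|\leq K_0$ (a universal constant) derived from the geometric series \eqref{eq:diff_ak2}: the constant $K$ in the hypothesis is chosen depending on $K_0$ to absorb the cross term $r^{-\alpha}\beta^1\|D^2P_k\|$ uniformly in $k$. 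Finally, $\|\widetilde g\|_{C^{1,\alpha}(0,0)}\leq \delta$ follows from the bound on $g$ and the fact that $P_k$ is a bounded polynomial vanishing on the flat tangent plane.

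Applying \Cref{lem1:gen_c2a} to $v$ yields a polynomial $\widetilde P(y) = \sum_i \widetilde a_i y_iy_n$ with $\sum_i|\widetilde a_i|\leq C_\star$, $\widetilde F(D^2\widetilde P,0,0)=0$, and $\|v-\widetilde P\|_{L^\infty(\widetilde\Omega_{\widetilde T}\cap Q_\eta)}\leq \eta^{2+\alpha}$. Define $a_i^{k+1} := a_i^k + r^\alpha \widetilde a_i$ and $P_{k+1}(x) := \sum_i a_i^{k+1} x_ix_n$. Unwinding the scaling gives $\sum_i|a_i^{k+1}-a_i^k|\leq C_\star r^\alpha = C_\star\eta^{k\alpha}$, so \eqref{eq:diff_ak2} holds at step $k+1$; the identity $\widetilde F(D^2\widetilde P,0,0)=0$ unfolds into $F(D^2P_{k+1},0,0)=0$; and the uniform estimate rescales to $\|u-P_{k+1}\|_{L^\infty(\Omega_T\cap Q_{\eta^{k+1}})} \leq r^{2+\alpha}\eta^{2+\alpha} = \eta^{(k+1)(2+\alpha)}$. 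The main obstacle is the verification of \ref{F3} for $\widetilde F$: the cross term $r^{-\alpha}\beta^1\|D^2P_k\|$ must be closed off uniformly in the iteration parameter $r = \eta^k$, which forces the delicate calibration of $K$ in the hypothesis against the uniform Hessian bound coming from \eqref{eq:diff_ak2}, and also forces the Hölder-scaling factor from \eqref{beta_hol1} to interact correctly with the natural degenerate time scaling $s = t/r^{2-\gamma}$.
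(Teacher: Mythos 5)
Your proposal is correct and follows essentially the same path as the paper's proof: induction on $k$ with $P_{-1}=P_0\equiv 0$ as the base case, the degenerate-parabolic rescaling $y=x/r$, $s=t/r^{2-\gamma}$, $v=(u-P_k)/r^{2+\alpha}$, verification that $\widetilde F$, $\widetilde g$, $\widetilde\beta^1$, $\widetilde\beta^2$, and $\partial\widetilde\Omega$ satisfy the hypotheses of \Cref{lem1:gen_c2a} (including the uniform Hessian bound $\|D^2P_k\|\leq K$ from the geometric series \eqref{eq:diff_ak2}), and the update $P_{k+1}(x)=P_k(x)+r^{2+\alpha}\widetilde P(x/r)$. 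The only cosmetic deviation is the appeal to \Cref{rmk:c1a_dist_degen} in the base case, which is unnecessary since $\|u-P_0\|_{L^\infty(\Omega_T\cap Q_1)}\leq\|u\|_{L^\infty(\Omega_T)}\leq 1$ is immediate.
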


\begin{proof}
	The proof is by induction. For $k=0$, by setting $P_{-1}=P_0\equiv0$, the conditions immediately hold. Suppose that the conclusion holds for $k \geq 0$. We claim that the conclusion also holds for $(k+1)$.
	
	For this purpose, let $r=\eta^k$, $y=x/r$, $s=t/r^{2-\gamma}$, and 
	\begin{align*}
		v(y,s) \coloneqq \frac{u(x,t)-P_k(x)}{r^{2+\alpha}}.
	\end{align*}
Then $v$ is a viscosity solution of
	\begin{equation*}
		\left\{\begin{aligned}
			v_s &= [\textnormal{dist}(y,\partial \widetilde \Omega)]^\gamma \widetilde{F}(D^2 v,y,s)  && \text{in } \widetilde{\Omega}_{\widetilde T}  \cap Q_{1} \\
			v &= \widetilde{g} && \text{on } \partial_s \widetilde{\Omega}_{\widetilde T}  \cap Q_{1},
		\end{aligned}\right.
	\end{equation*}
where $\widetilde\Omega \coloneqq r^{-1}\Omega = \{r^{-1} x: x \in \Omega\}$, $\widetilde T \coloneqq T/r^{2-\gamma}$,  
\begin{equation*}
	\widetilde{F}(M,y,s)\coloneqq r^{-\alpha}  F(r^\alpha M + D^2 P_k,x,t) \quad \text{and} \quad  \widetilde{g}(y,s)\coloneqq \frac{g(x,t)-P_k(x)}{r^{2+\alpha} }.
\end{equation*}
Then it immediately follows that 
\begin{equation*}
	\|v\|_{L^{\infty}(\widetilde{\Omega}_{\widetilde T} \cap Q_{1})} \leq 1,\quad v(0,0)=0=|Dv(0,0)|
\end{equation*}
and
\begin{equation*}
	 \|\partial \widetilde\Omega\|_{C^{1,\alpha}(0)} \leq r^{\alpha} \|\partial \Omega\|_{C^{1,\alpha}(0)} \leq \delta.
\end{equation*}
From \eqref{eq:diff_ak2}, there exists a constant $K>1$ depending only on $n$, $\lambda$, $\Lambda$, $\gamma$, and $\alpha$ such that $\|D^2 P_k\|  \leq K$ for all $k \ge1$ and hence we have
\begin{align*}
	 |g(y,s)| 
	 %&\leq r^{-2-\alpha} \left(  \frac{\delta}{2} (|x|+\sqrt{|t|})^{2+\alpha} + \sum_{i=1}^n |a_i^k| |x_i | | x_n| \right) \\
	 &\leq r^{-2-\alpha} \left( \frac{\delta}{2} r^{2+\alpha} (|y|+\sqrt{|s|})^{2+\alpha}+ \sqrt{n}K \|\partial \Omega\|_{C^{1,\alpha}(0)} |x| |x'|^{1+\alpha}  \right)\\
	 &\leq \delta (|y|+\sqrt{|s|})^{2+\alpha} \quad \text{for all } (y,s) \in \partial_s \widetilde{\Omega}_{\widetilde T}  \cap Q_{1} . 
\end{align*}
By defining $\widetilde{\beta}^1$ and $\widetilde{\beta}^2$ as follows
\begin{equation*}
	\widetilde{\beta}^1(y,s) \coloneqq  \beta^1(x,t)    \quad \text{and} \quad
	\widetilde{\beta}^2(y,s) \coloneqq  r^{-\alpha} K \beta^1(x,t) +r^{-\alpha} \beta^2(x,t) ,
\end{equation*}
we see 
\begin{align*}
	|\widetilde{F}(M,y,s)- \widetilde{F}(M,0,0)|
	%& = r^{-\alpha} |  F(r^\alpha M + D^2 P_k,x,t)-  F(r^\alpha M + D^2 P_k,0,0)| \\
	%& \leq r^{-\alpha} \beta^1(x,t) \|r^\alpha M + D^2 P_k\| + r^{-\alpha}\beta^2(x,t)  \\
	%& \leq \beta^1(x,t) \|M\| + r^{-\alpha} K \beta^1(x,t) +r^{-\alpha} \beta^2(x,t) \\
	&\leq\widetilde{\beta}^1(y,s) \|M\| + \widetilde{\beta}^2(y,s).
\end{align*}
Combining \eqref{beta_hol1} and \eqref{beta_hol2} gives 
\begin{equation*}
	\|\widetilde{\beta}^1\|_{L^{\infty}(\widetilde{\Omega}_{\widetilde T}  \cap Q_1  )}\leq \delta \quad \text{and} \quad \|\widetilde{\beta}^2\|_{L^{\infty}(\widetilde{\Omega}_{\widetilde T} \cap Q_1)}\leq \delta. 
\end{equation*}
By \Cref{lem1:gen_c2a} for $v$, there exists a polynomial $\widetilde{P}(y)= \sum_{i=1}^n \widetilde{a}_i y_i y_n$ such that 
\begin{align}\label{eq:ind1_bd}
	\|v-\widetilde{P}\|_{L^{\infty}(\widetilde{\Omega}_{\widetilde{T}} \cap Q_{\eta})} \leq \eta^{2+\alpha},
\end{align}
\begin{align}\label{eq:ind2_bd}
	\sum_{i=1}^n |\widetilde{a}_i| \leq C_\star ,\quad \text{and} \quad \widetilde{F}(D^2 \widetilde{P},0,0)=0.
\end{align}
We now let $P_{k+1}(x) \coloneqq P_k(x) +r^{2+\alpha} \widetilde{P}(x/r)$. Then \eqref{eq:ind2_bd} shows that
\begin{align*}
	\sum_{i=1}^n |a^{k+1}_i-a^k_i|  \leq  C_\star \eta^{k \alpha} \quad \text{and} \quad F(D^2P_{k+1},0,0) = 0.
\end{align*}
Finally, \eqref{eq:ind1_bd} implies   
\begin{align*}
	|u(x,t)-P_{k+1}(x)| &=|u(x,t)- P_k(x)-r^{2+\alpha}\widetilde{P}(x/r) | \\
	&\leq r^{2+\alpha} |v(x/r,t/r^{2-\gamma})-\widetilde{P}(x/r)| \\
	&\leq \eta^{(k+1)(2+\alpha)}
\end{align*}
for all $(x,t) \in \Omega_T \cap Q_{\eta^{k+1}}$.
\end{proof}

In the regularity theory of nonlinear PDE problems, the Schauder-type estimates is mainly used to establish $C^{2,\alpha}$-regularity of solutions. However, due to the structure of the product $u^\gamma$ and $F$ in \eqref{prob:main}, it is impossible to apply Schauder-type estimates to the equation in \eqref{prob:main}. Even if we consider a wider solution class, such as when obtaining the $C^{1,\alpha}$-estimates, this difficulty cannot be resolved because the boundary regularity of functions in the solution class is at most $C^{1,\alpha}$. To solve this, we try to convert the structure of the product $u^\gamma$ and $F$ into a H\"older continuity of fully nonlinear operator. To be precise, we convert the equation in \eqref{prob:main} as follows:
\begin{equation} \label{eq:convert}
	u_t = u^\gamma F(D^2 u,x,t) = [\textnormal{dist}(x,\partial \Omega)]^\gamma \widetilde{F}(D^2u,x,t) \quad \text{in } \Omega_T,
\end{equation}
where 
\begin{equation*}
	h(x,t) \coloneqq \frac{u(x,t)}{\textnormal{dist}(x,\partial \Omega)}  \quad \text{and} \quad \widetilde{F}(D^2u,x,t) \coloneqq h^\gamma F(D^2u,x,t).
\end{equation*}
According to \Cref{lem:u<x}, we have $C_1<h<C_2$ in $\overline{\Omega_T}$, so $\widetilde{F}$ satisfies \ref{F1}. Therefore if we have $h \in C^{\alpha_0}(\overline{\Omega_T})$ for some $\alpha_0 \in (0,1)$, Schauder-type estimates can be applied to the equation in \eqref{prob:main}.
%==========================================================================
%						Result: LY24 on General Domain 
%==========================================================================
\begin{lemma}\label{lem:ly24_gen}
	Let $\alpha_0 \in (0,\overline{\alpha})$ with $\alpha_0 \leq 1-\gamma$ and $\partial \Omega \in C^2$.  Assume that $u \in C(\overline{\Omega_T})$ is a nonnegative viscosity solution of \eqref{prob:main}. Then there exists a constant $a \in \mathbb{R}$ such that
\begin{equation*}
	\left| \frac{u(x,t)}{\textnormal{dist}(x,\partial \Omega)}-a \right | \leq C_0(|x| + \sqrt{|t|})^{\alpha_0} \quad \text{for all } (x,t) \in \overline{\Omega_T \cap Q_{\rho}}
\end{equation*}
and $|a| \leq C_0$, where $C_0>1$ is a constant depending only on $n$, $\lambda$, $\Lambda$, $\gamma$, $\alpha_0$, $ \|u\|_{L^{\infty}(\Omega_T)}$, and $\|\partial \Omega\|_{C^2}$.
\end{lemma}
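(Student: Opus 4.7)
The plan is to approximate $u$ at boundary points by affine functions using the boundary $C^{1,\alpha_0}$-estimate \Cref{thm:bdry_c1a}, then to show that the resulting affine coefficient is H\"older continuous along the boundary, and finally to combine these two ingredients to deduce the H\"older expansion of $u/d$ at $(0,0)$, where $d(x) := \textnormal{dist}(x,\partial\Omega)$.

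First I would observe that, since $u$ is a viscosity solution of \eqref{prob:main} with $F$ satisfying \ref{F1}, $u$ lies in the $S$-class \eqref{eq:s-class}; hence \Cref{thm:bdry_c1a} applies at every boundary point $(y,s) \in \partial\Omega \times [-T,0]$ near $(0,0)$, with constants uniform in $(y,s)$ because $\partial\Omega \in C^2$. The affine approximant $L_{y,s}$ provided by \Cref{thm:bdry_c1a} satisfies $L_{y,s}(z) = O(|z-y|^{1+\alpha_0})$ for every $z \in \partial\Omega$ near $y$ since $u(z,s) = 0$ on the boundary; combined with the fact that each boundary chord $z-y$ is tangent to $\partial\Omega$ up to $O(|z-y|^2)$, this forces $L_{y,s}$ to vanish on the tangent plane at $y$. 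Thus $L_{y,s}(x) = a(y,s)\,\nu(y)\cdot(x-y)$ for some $a(y,s)\in\mathbb{R}$ with $|a(y,s)| \leq C$, where $\nu(y)$ is the inward unit normal, giving
\begin{equation*}
|u(x,t) - a(y,s)\,\nu(y)\cdot(x-y)| \leq C(|x-y|+\sqrt{|t-s|})^{1+\alpha_0}
\end{equation*}
in a definite cylinder $Q_{r_0}(y,s) \cap \Omega_T$.

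Next I would establish H\"older continuity of $a$ on the boundary at $(0,0)$. For $(y,s)\in\partial\Omega\times[-T,0]$ near $(0,0)$, set $r := |y|+\sqrt{|s|}$ and take the test point $(x^\star, s) := (y + 2r\,\nu(y), s)$, which lies in $\Omega_T$ and in both $Q_{r_0}(y,s)$ and $Q_{r_0}(0,0)$ for $r$ small. Subtracting the two $C^{1,\alpha_0}$-expansions evaluated at $(x^\star,s)$, and using $\nu(0)\cdot y = O(|y|^2)$ and $\nu(0)\cdot\nu(y) = 1 + O(|y|)$ (both consequences of $\partial\Omega \in C^2$), I obtain $|a(y,s) - a(0,0)| \leq C\,r^{\alpha_0}$.

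Finally, for each $(x,t) \in \Omega_T \cap Q_\rho$, I would select a closest boundary point $y^\star \in \partial\Omega$ to $x$, so that $x - y^\star = d(x)\,\nu(y^\star)$. Applying the boundary expansion at $(y^\star, t)$ to the point $(x,t)$, whose parabolic distance from $(y^\star, t)$ equals $d(x)$, and dividing by $d(x)$ yields
\begin{equation*}
\left|\frac{u(x,t)}{d(x)} - a(y^\star, t)\right| \leq C\,d(x)^{\alpha_0}.
\end{equation*}
Since $|y^\star| \leq |x|+d(x) \leq 2|x|$, combining this with the boundary H\"older bound on $a$ via the triangle inequality yields the desired estimate with $a := a(0,0)$. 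The main obstacle is the uniform identification of $L_{y,s}$ as a multiple of $\nu(y)\cdot(x-y)$ with constants independent of $(y,s)$; this rests crucially on $\partial\Omega \in C^2$, which provides uniform $C^{1,\alpha_0}$-regularity of $\partial\Omega$ and hence uniform constants in the boundary $C^{1,\alpha_0}$-estimates of \Cref{thm:bdry_c1a}.
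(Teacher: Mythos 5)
Your proof is correct, and it takes a genuinely different route from the paper. The paper flattens $\partial\Omega$ with a $C^2$ coordinate chart, uses the Lipschitz bound from \Cref{lem:u<x} to pass from the degeneracy $u^\gamma$ to $x_n^\gamma$, invokes interior $C^{2,\alpha}$-regularity (hence \ref{F2}--\ref{F3}) to freeze $Dv$ as a lower-order term, and then quotes \Cref{rem:xn1a-growth} (built on \Cref{result:ly24-3} from \cite{LY23}), which simultaneously gives the flat-boundary $C^{1,\alpha_0}$-regularity of $v$ in $(x',t)$ and the sharp normal rate $|v(x,t)/x_n - v_n(x',0,t)| \lesssim x_n^{\alpha_0}$. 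You instead stay in the original geometry and rely only on the pointwise boundary $C^{1,\alpha_0}$-estimate of \Cref{thm:bdry_c1a} (valid with uniform constants since $\partial\Omega\in C^2$ gives a uniform $C^{1,\alpha_0}$-bound on the boundary): you identify each affine approximant as $a(y,s)\,\nu(y)\cdot(\,\cdot\,-y)$ via tangential vanishing of $u$ on $\partial\Omega$ and the $C^2$-regularity of the boundary, then propagate $\alpha_0$-H\"older continuity of the coefficient $a(\cdot,\cdot)$ by comparing the expansions at a common interior test point, and finally combine the expansion at the nearest boundary point with this H\"older bound. Your argument avoids the coordinate chart, avoids \ref{F2}--\ref{F3}, and does not use the constraint $\alpha_0 \le 1-\gamma$ (which the paper inherits from \Cref{result:ly24-3}); in exchange it spends its effort on the geometric identification of $L_{y,s}$ and on the oscillation estimate for $a$, which the paper gets for free once the boundary is flattened. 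Both proofs are about the same length and both hinge on the $C^2$-regularity of $\partial\Omega$ and on \Cref{lem:u<x}; only, your use of the latter is implicit (it is what puts $u$ in the solution class fed to \Cref{thm:bdry_c1a}) while the paper uses it explicitly to convert $u^\gamma$ into $x_n^\gamma$.
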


\begin{proof}
By \Cref{lem:u<x}, there exist constants $C_1>0$ and $C_2>0$ such that
\begin{equation} \label{lip_u}
	C_1 \, \textnormal{dist}(x,\partial \Omega) \leq u(x,t) \leq C_2 \, \textnormal{dist}(x,\partial \Omega) \quad \text{for all } (x,t) \in \overline{\Omega_T}.
\end{equation}
Take a coordinate chart $\varphi:\overline{B_1^+} \to \overline{\Omega} \cap  \overline{B_r}$ that flattens $\partial \Omega$ such that 
\begin{equation} \label{flattening}
	\textnormal{dist}( \varphi(x),\partial \Omega)= x_n,  \quad
	\varphi(0)=0, \quad \text{and} \quad 
	\varphi(\partial B_1^+ \cap \{x_n = 0\}) \subset \partial \Omega \cap \overline{B_r}.
\end{equation}
Then, $v(x,t)=u(\varphi(x),t)$ is a viscosity solution of 
\begin{equation*}
\left\{\begin{aligned}
	v_t &= v^\gamma \widetilde{F}(D^2v, Dv,x,t) && \text{in } (B_1^+)_T \\
	v&=0 && \text{on } \{x_n=0\},
\end{aligned}\right.
\end{equation*}	
where 
\begin{equation*}
	\widetilde{F}(D^2v, Dv,x,t)= F((D\psi^T \circ \varphi) (D^2 v) (D\psi \circ \varphi) + (Dv)(D^2 \psi \circ \varphi) ,\varphi,t) \quad \text{for } \psi\coloneqq\varphi^{-1}.
\end{equation*}
Furthermore, \eqref{lip_u} implies that
\begin{equation} \label{lip_v}
	C_1 x_n \leq v(x,t) \leq C_2 x_n \quad \text{for all } (x,t) \in \overline{(\psi(\Omega))_T}.
\end{equation}

We know from \Cref{rmk:positivity} that the equation in \eqref{prob:main} is uniformly parabolic in $K_T$ for any domain $K \subset \joinrel \subset \Omega$ and $u$ satisfies
\begin{equation*}
	\mathcal{M}^{-}_{m \lambda, M \Lambda}(D^2u) \leq u_t \leq  \mathcal{M}^{+}_{m \lambda, M \Lambda}(D^2u) 
\end{equation*}
in the viscosity sense in $K_T$, where $m\coloneqq \inf_{K_T} u^\gamma>0$ and $M\coloneqq \sup_{K_T} u^\gamma$. By Krylov--Safonov theory for fully nonlinear uniformly parabolic equations, we have $u \in C^{\alpha}_{\textnormal{loc}}(\Omega_T)$ for some $\alpha \in (0,1)$. Since $F$ satisfies \ref{F2} and \ref{F3}, we have $u \in C_{\textnormal{loc}}^{2,\alpha\gamma}(\Omega_T)$ from the interior regularity results for fully nonlinear uniformly parabolic equations. Thus, we can treat $Dv$ as just an $(x,t)$-variable. Moreover, $\widetilde F$ satisfies \ref{F1}, we can see that $v$ satisfies  
\begin{equation} \label{class_v}
	v^\gamma \left( \mathcal{M}_{\lambda,\Lambda}^-(D^2 v) + \widetilde{F}(O_n,Dv,x,t) \right) \leq v_t \leq v^\gamma \left( \mathcal{M}_{\lambda,\Lambda}^+(D^2 v) + \widetilde{F}(O_n,Dv,x,t) \right)
\end{equation}
in the viscosity sense in $ (B_1^+)_T $. Combining \eqref{lip_v} and \eqref{class_v} gives 
\begin{equation*}
	x_n^\gamma \mathcal{M}_{C_1^\gamma \lambda, C_2^\gamma \Lambda}^-(D^2 v) +  \widetilde{f} \leq v_t \leq x_n^\gamma \mathcal{M}_{C_1^\gamma \lambda, C_2^\gamma\Lambda}^+(D^2 v) +   \widetilde{f} 
\end{equation*}
in the viscosity sense in $(\psi(\Omega))_T$, where $\widetilde{f}(x,t) \coloneqq x_n^\gamma \widetilde{F}(O_n,Dv,x,t)$. 
By applying \Cref{rem:xn1a-growth} to $v$, we have $v \in C^{1,\alpha_0}(\overline{Q_{1/2}^+})$ and 
\begin{equation} \label{est:v/x}
	\left| \frac{v(x,t)}{x_n}- v_n(x',0,t) \right | \leq C_\diamond (\|v\|_{L^{\infty}((B_1^+)_T)} + \|\widetilde{f}\|_{L^{\infty}((B_1^+)_T)}) x_n^{\alpha_0} 
\end{equation}
for all $(x,t) \in \overline{(B_{1/2}^+)_{T/2}}$. Combining \eqref{flattening} and \eqref{est:v/x} gives 
\begin{align*}
	\left| \frac{u(x,t)}{\textnormal{dist}(x,\partial \Omega)}- v_n(0,0) \right | 
	%&\leq C_\diamond( \|u\|_{L^{\infty}(\Omega_T)} + \|\widetilde{f}\|_{L^{\infty}((B_1^+)_T)}) (|\psi(x)| + \sqrt{|t|})^{\alpha_0} \\
	&\leq C_0 \left(\|\psi\|_{C^1(\overline{(\varphi(B_1^+))_T})} |x| + \sqrt{|t|}\right)^{\alpha_0} 
	\leq C_0  ( |x| + \sqrt{|t|})^{\alpha_0} 
\end{align*}
for all $ (x,t) \in \overline{(\varphi(B_{1/2}^+))_{T/2}}$.
\end{proof}
%==========================================================================
%						Proof of Main Theorem 3
%==========================================================================
\begin{proof}[Proof of \Cref{thm:bdry_c2a}]
	Without loss of generality, assume that $(x_0,t_0)=(0,0)$ and $e_n$ is the normal vector $\partial \Omega$ at $0$. We claim that the assumptions in \Cref{lem2:gen_c2a}  hold after the appropriate reduction argument. 

Since $\partial \Omega \in C^{2,\alpha}(0)$, there exists a polynomial $P_{\partial \Omega}(x')$ with $\deg P_{\partial \Omega} \leq 2$, $P_{\partial \Omega}(0')=0$, and $DP_{\partial \Omega}(0')=0$ such that 
\begin{equation} \label{p_om_c2a}
	|x_n - P_{\partial \Omega}(x')| \leq \|\partial \Omega\|_{C^{2,\alpha}(0)} |x'|^{2+\alpha} \quad \text{for all }  x \in \partial \Omega \cap B_1.
\end{equation}

By \Cref{thm:bdry_c1a}, $Du(0,0)$ is well-defined and the function 
$$\widetilde{u}(x,t) \coloneqq u(x,t) - u_n(0,0)\big(x_n-P_{\partial \Omega}(x')\big)$$ 
is a viscosity solution of 
\begin{equation*} 
\left\{\begin{aligned}
	\widetilde{u}_t &= [\textnormal{dist}(x,\partial \Omega)]^\gamma \widetilde{F}(D^2 \widetilde{u},x,t) && \text{in } \Omega_T \\
	\widetilde{u} &=g && \text{on } \partial_s \Omega_T ,
\end{aligned}\right.
\end{equation*}
where $g(x,t) \coloneqq  - u_n(0,0)\big(x_n-P_{\partial \Omega}(x')\big)$,
\begin{equation*}
	h(x,t) \coloneqq \frac{u(x,t)}{\textnormal{dist}(x,\partial \Omega)} \quad  \text{and} \quad  \widetilde{F}(M,x,t) \coloneqq h^\gamma F(M -u_n(0,0) D^2 P_{\partial \Omega},x,t).
\end{equation*}
From \eqref{p_om_c2a}, we know that
\begin{equation*}
	|g(x,t)| \leq  |u_n(0,0)| \times \|\partial \Omega\|_{C^{2,\alpha}(0)}(|x| + \sqrt{|t|})^{2+\alpha} \quad \text{for all }  (x,t) \in \partial_s \Omega_T \cap Q_1.
\end{equation*}
Define
\begin{align*}
	\widetilde{\beta}^1(y,s) &\coloneqq C_2^\gamma \beta^1(x,t) + |h(x,t) - h(0,0)|^\gamma  \quad \text{and} \\
	\widetilde{\beta}^2(y,s) &\coloneqq   C_2^\gamma |u_n(0,0)| \times \|\partial \Omega\|_{C^{2,\alpha}(0)} \beta^1(x,t) + C_2^\gamma  \beta^2(x,t) \\
	&\quad+ |u_n(0,0)| \times \|\partial \Omega\|_{C^{2,\alpha}(0)} |h(x,t) - h(0,0)|^\gamma ,
\end{align*}	
where $C_2>0$ is the constant given in \Cref{lem:u<x}. By \Cref{lem:ly24_gen}, we have
\begin{equation*}
	|h(x,t) -h(0,0)| \leq C_0 \|u\|_{L^\infty(\Omega_T)} (|x| +\sqrt{|t|})^{\alpha_0} \quad \text{for all } (x,t) \in \overline{\Omega_T \cap Q_1}
\end{equation*}
for $\alpha_0 \coloneqq (\alpha + \gamma \min \{\overline{\alpha},1-\gamma\})/(2\gamma)$ and hence $\widetilde{\beta}^1, \widetilde{\beta}^2 \in C^\alpha(0,0)$ and $\widetilde{\beta}^1(0,0)=0=\widetilde{\beta}^2(0,0)$. Furthermore, we obtain
\begin{align*}
	&|\widetilde{F}(M,y,s)- \widetilde{F}(M,0,0)| \\
	%&\quad=|h(x,t)^\gamma F(M - u_n(0,0) D^2 P_{\partial \Omega},x,t)- h(0,0)^\gamma F(r^\alpha M-u_n(0,0) D^2 P_{\partial \Omega},0,0)| \\
	&\quad \leq h(x,t)^\gamma  |F( M -u_n(0,0) D^2 P_{\partial \Omega},x,t)- F( M -u_n(0,0) D^2 P_{\partial \Omega},0,0)| \\
	&\qquad +  |h(x,t) - h(0,0)|^\gamma |F( M -u_n(0,0) D^2 P_{\partial \Omega},0,0)|\\
	%&\quad  \leq   h(x,t)^\gamma \left( \beta^1(x,t) \| M -u_n(0,0) D^2 P_{\partial \Omega}\| + \beta^2(x,t)\right) \\
	%&\qquad +  |h(x,t) - h(0,0)|^\gamma \| M -u_n(0,0) D^2 P_{\partial \Omega}\| \\
	&\quad \leq C_2^\gamma \left(\beta^1(x,t) \|M\| + |u_n(0,0)| \times \|\partial \Omega\|_{C^{2,\alpha}(0)}\|  \beta^1(x,t) + \beta^2(x,t) \right) \\
	&\qquad +  |h(x,t) - h(0,0)|^\gamma(\|M\| + |u_n(0,0)| \times \|\partial \Omega\|_{C^{2,\alpha}(0)}\| )\\
	%&\quad = \left(C_2^\gamma \beta^1(x,t) + |h(x,t) - h(0,0)|^\gamma \right)\|M\| \\
	%&\qquad+\left(C_2^\gamma  |u_n(0,0)| \|\partial \Omega\|_{C^{2,\alpha}(0)}\|  \beta^1(x,t) + C_2^\gamma  \beta^2(x,t) + |u_n(0,0)| \|\partial \Omega\|_{C^{2,\alpha}(0)}\|  |h(x,t) - h(0,0)|^\gamma \right) \\
	&\quad\leq\widetilde{\beta}^1(y,s) \|M\| + \widetilde{\beta}^2(y,s).
\end{align*}
Let
	\begin{align*}
		N&\coloneqq \|\widetilde{u}\|_{L^{\infty}(\Omega_T)} + 2^{1+\alpha} \delta^{-1} [\widetilde{\beta}^2]_{C^{\alpha}(0,0)} + 2\delta^{-1} \|g\|_{C^{2,\alpha}(0,0)}+ |\widetilde{F}(O_n,0,0)|,
	\end{align*}
by considering $\widetilde{u}/N$ and $g/N$, let us assume without loss of generality that $\|u\|_{L^{\infty}(\Omega_T)} \leq 1$,
\begin{equation*} 
	\beta^2(x,t) \leq \frac{\delta}{2^{1+\alpha}} (|x|+\sqrt{|t|})^{\alpha} \quad \text{for all } (x,t) \in \overline{\Omega_T\cap Q_1},
\end{equation*}  
and
\begin{equation*}
	|g(x,t)| \leq \frac{\delta}{2}(|x| + \sqrt{|t|})^{2+\alpha} \quad \text{for all }  (x,t) \in \partial_s \Omega_T \cap Q_1.
\end{equation*}
Furthermore, by the scaling argument, we may assume that 
\begin{align*} 
	\beta^1(x,t) &\leq \frac{\delta}{K2^{1+\alpha}} (|x|+\sqrt{|t|})^{\alpha} \quad \text{for all } (x,t) \in \overline{\Omega_T\cap Q_1} , 
\end{align*}  
and
\begin{equation*}
	\|\partial \Omega\|_{C^{1,\alpha}(0)} \leq \frac{\delta}{2\sqrt{n}K},
\end{equation*}
where $K>0$ is the constant given in \Cref{lem2:gen_c2a}. Therefore, by \Cref{lem2:gen_c2a},  there exists a sequence $\{P_k\}_{k=-1}^{\infty}$ of the form $P_k(x)= \sum_{i=1}^n a_i^k x_i x_n$
such that for all $k \geq 0$, we have 
\begin{align*} 
	\|u-P_k\|_{L^{\infty}(\Omega_T \cap Q_{\eta^k})} \leq \eta^{k(2+\alpha)}, \quad  \sum_{i=1}^n |a^k_i-a^{k-1}_i| \leq  C_\star \eta^{(k-1)\alpha},  \quad \text{and} \quad F(D^2P_k,0,0)=0.
\end{align*}
According to the standard argument, we obtain a polynomial $P(x)=\sum_{i=1}^n a_i x_i x_n$ such that $P_k \to P$ satisfying
\begin{align*}
	\sum_{i=1}^n |a^k_i-a_i| \leq C_2\eta^{k \alpha } \quad \text{and} \quad  F(D^2 P,0,0)=0.
\end{align*}

Finally, for any $(x,t) \in \Omega_T \cap Q_1$, there exists $j \geq 0$ such that $\eta^{j+1} \leq \max\{ |x|, \sqrt{|t|} \}< \eta^j$. Then we conclude that
\begin{align*}
	|u(x,t)-P(x)| \leq |u(x,t)-P_j(x)|+|P_j(x) -P(x)|  \leq C(|x|+\sqrt{|t|})^{2+\alpha},
\end{align*}
which implies $u \in C^{2, \alpha}(0,0)$.
\end{proof}

\subsection{Proof of \Cref{thm:solv}}
%==========================================================================
%			Proof of The solvability of the Cauchy--Dirichlet problem
%==========================================================================
The structure of the product $u^\gamma$ and $F$ in \eqref{prob:main} prevents the integration of boundary and interior regularity. The experience of converting the structure of the product $u^\gamma$ and $F$ into the H\"older continuity of fully nonlinear operator is also useful when proving global regularity.

Now we are ready to prove the main results of this paper.
\begin{proof}[Proof of \Cref{thm:solv}]
For each $(x_0,t_0) \in \overline{\Omega_T}$, we want to find a polynomial $P(x,t)$ with $\deg P \leq 2$ that satisfies $\|P\|_{C^2(\overline{\Omega_T}) } \leq C$ and
\begin{equation*}
	|u(x,t)-P(x,t)| \leq C(|x-x_0| + \sqrt{|t-t_0|})^{2+\alpha} \quad \mbox{for all } (x,t) \in \Omega_T.
\end{equation*}
If $(x_0,t_0) \in \partial_s \Omega_T$, then our goal $P$ is just itself from \Cref{thm:bdry_c2a}. Also, if $t_0 =-T$, then our goal $P$ is the polynomial derived from $C^{2,\tilde\alpha}$-regularity of $u_0$ on $\overline{\Omega_T}$.  So, it is enough to consider only the interior point $(x_0,t_0) \in \Omega_T$. Let $(x_1,t_1) \in \partial_p \Omega_T$ such that 
\begin{equation*}
	r\coloneqq \textnormal{dist}(\partial_p \Omega_T,(x_0,t_0)) = |x_0-x_1| + \sqrt{ |t_0-t_1|}.
\end{equation*}
If $t_1=-T$, the proof is the same as for the uniformity parabolic equations, so we only consider the case $t_1>-T$. By \Cref{thm:bdry_c2a}, there exist constants $\alpha_1 \in (0,\tilde\alpha)$, $A>0$, and a polynomial $P_1(x)$ with $\deg P_1 \leq 2$ and  $F(D^2P_1,x_1,t_1)=0$ such that 
\begin{equation*}
		|DP_1(x_1,t_1)| + \|D^2P_1\| \leq A
	\end{equation*}
and
\begin{equation} \label{u-p1}
	|u(x,t)-P_1(x)|\leq A (|x-x_1|+ \sqrt{|t-t_1|} )^{2+\alpha_1}
\end{equation}
for all $(x,t) \in \Omega_T \cap Q_1(x_1,t_1)$ which is extensible throughout $\Omega_T$. Since $u$ is a viscosity solution of \eqref{eq:convert}, the function $v=u-P_1$ is a viscosity solution of 
\begin{equation*}
	v_t = \overline{F}(D^2 v,x,t)   \quad \text{in } \Sigma,
\end{equation*}
where 
\begin{equation*}
	\overline{F} (M,x,t) \coloneqq  [\textnormal{dist}(x,\partial \Omega)]^\gamma \widetilde{F}(M +D^2P_1,x,t) 
\end{equation*}
and $\Sigma=\{(x,t):|x-x_0|+\sqrt{ |t-t_0|} < r/2 \}$. Since there exists $\theta \in \mathbb{R}$ such that $\overline{F}(\theta I_n, x_0,t_0) =0$ and the function $w(x,t) =v(x,t) -\theta |x|^2/2$ is viscosity solution of 
\begin{equation*}
	w_t = \overline{F}(D^2 w+ \theta I_n,x,t)  \quad \text{in } \Sigma, 
\end{equation*}
we may assume that $\overline{F}(O_n,x_0,t_0)=0$.

Since the operator $\overline{F}$ is uniformly parabolic in $\Sigma$, by the interior $C^{2,\alpha}$-estimates for uniformly parabolic equations, there exists a polynomial $P_2(x,t)$ with $\deg P_2 \leq 2$ such that 
\begin{equation} \label{norm_p2}
	r |DP_2(x_0,t_0)| + r^2\|D^2P_2\| +r^2 |\partial_t P_2 | \leq B \left( \|v\|_{L^{\infty}(\Sigma)} + r^2\| \overline{\beta}^2\|_{C^{\alpha}(\overline{\Sigma})} \right)
\end{equation}
and
\begin{equation} \label{v_int} 
	|v(x,t)-P_2(x,t)|  \leq B \bigg( \frac{ \|v\|_{L^{\infty}(\Sigma)}}{r^{2+\alpha}} + \frac{\| \overline{\beta}^2\|_{C^{\alpha}(\overline{\Sigma})} }{r^{\alpha}} \bigg) (|x-x_0| + \sqrt{|t-t_0|})^{2+\alpha} 
\end{equation}
for all $(x,t) \in \Sigma$, where $B>0$ is a constant depending only on $n$, $\lambda$, $\Lambda$, $\gamma$, $\alpha$, and $\|\beta^1\|_{C^{\alpha}(\overline{\Omega_T})}$ and $\overline{\beta}^2 \coloneqq  \beta^1 \|D^2P_1\| + \beta^2$.
We can assume that $\alpha<\alpha_1$. Then, \eqref{u-p1} implies that
\begin{align} 
	|v(x,t)| 
	%&\leq A (|x-x_1|+\sqrt{|t-t_1|})^{2+\alpha_1} \leq A \big(|x-x_0|+\sqrt{|t-t_0|} +r \big)^{2+\alpha} \nonumber \\
	& \leq AC r^{2+\alpha} \quad \text{for all }(x,t) \in \Sigma  \label{est_v}
\end{align}
and we also see
\begin{equation} \label{est_vt-Lv}
	|\overline{\beta}^2(x,t)| \leq C r^{\alpha} \quad \text{for all } (x,t) \in \overline{\Sigma}.
\end{equation}
Thus, combining \eqref{norm_p2}, \eqref{est_v}, and \eqref{est_vt-Lv} gives  
\begin{equation} \label{norm_pk2'}  	
	|P_2(x_0,t_0)|+ r |DP_2(x_0,t_0)| + r^2\|D^2P_2\| +r^2 |\partial_t P_2 | \leq ABC r^{2+\alpha}.
\end{equation}
Moreover, combining \eqref{v_int}, \eqref{est_v}, and \eqref{est_vt-Lv} gives  
\begin{align*}
	|u(x,t)-P_1(x)-P_2(x,t)| &\leq ABC (|x-x_0| + \sqrt{|t-t_0|})^{2+\alpha} \quad \text{for all } (x,t) \in \Sigma. 
\end{align*}
On the other hand, combining \eqref{u-p1} and \eqref{norm_pk2'} gives
\begin{align*}
	|u(x,t)-P_1(x)-P_2(x,t)| 
	%&\quad\leq |u(x,t)-P_1(x)| + |P_2(x,t)| \\
	%&\quad\leq A(|x-x_2|+ \sqrt{|t-t_1|} )^{2+\alpha} \\
	%&\qquad+|P_2(x_0,t_0)|  +|DP_2 (x_0,t_0)| |x-x_0|  +\frac{1}{2} \|D^2P_2\| |x-x_0|^2 + |\partial_t P_2| |t-t_0|\\
	%&\quad\leq A(|x-x_0|+ \sqrt{|t-t_0|} )^{2+\alpha} + ABCr^{2+\alpha}+ ABCr^{1+\alpha} |x-x_0|\\
	%&\qquad\qquad\qquad\qquad\qquad\qquad\qquad\qquad+ \frac{1}{2}ABCr^{\alpha}|x-x_0|^2 + ABCr^{\alpha} |t-t_0|\\
	&\leq  ABC(|x-x_0|+ \sqrt{|t-t_0|} )^{2+\alpha} \quad \text{for all $(x,t) \in \Omega_T \setminus \Sigma$.}
\end{align*}
Thus, $P \coloneqq P_1+P_2$ is the desired polynomial.
\end{proof}

%\noindent{\bf Data availability:} Data sharing not applicable to this article as no datasets were generated or analysed during the current study.

%==========================================================================
%
% 							References
%
%==========================================================================
%\bibliography{reference}
%\bibliographystyle{alpha}
\bibliographystyle{abbrv}

\end{document}